\newtheorem{thm}{Theorem}[section]
\newtheorem{crl}[thm]{Corollary}
\newtheorem{lmm}[thm]{Lemma}
\newtheorem{prp}[thm]{Proposition}
\theoremstyle{definition}
\theoremstyle{remark}
\newtheorem*{rem}{Remark}%%% * means no numbering
\newcommand{\R}{\mathbb{R}}
\newcommand{\C}{\mathbb{C}}
\newcommand{\N}{\mathbb{N}}
\newcommand{\Z}{\mathbb{Z}}
\newcommand{\T}{\mathbb{T}}
\newcommand{\la}{\langle}
\newcommand{\ra}{\rangle}
\newcommand{\pd}{\partial}
\newcommand{\mL}{\mathcal{L}}
\newcommand{\eps}{\varepsilon}
\newcommand{\tv}{\tilde{v}}
\newcommand{\bM}{\mathbb{M}}
\DeclareMathOperator{\sech}{sech}
\DeclareMathOperator{\spann}{span}
\begin{document}
\title[$L^2$-stability of solitary waves for the KdV equation]{$L^2$-stability of solitary waves for the KdV equation via Pego and Weinstein's method}
\author[T.~Mizumachi]{Tetsu Mizumachi}
\address{Faculty of Mathematics, Kyushu University, Fukuoka 819-0395, Japan.}
\email{mizumati@math.kyushu-u.ac.jp}
\author[N.~Tzvetkov]{Nikolay Tzvetkov}
\address{University of Cergy-Pontoise, UMR CNRS 8088, Cergy-Pontoise, F-95000 and Institut Universitaire de France}
\email{nikolay.tzvetkov@u-cergy.fr}
\subjclass[2010]{Primary 35B35, 37K40; Secondary 35Q35}
\thanks{This research is Supported by Grant-in-Aid for Scientific Research (No. 21540220).}  %optional
\keywords{\textit{the KdV equation, solitary waves, stability}}        %optional
\begin{abstract}      %optional
In this article, we will prove $L^2(\R)$-stability of $1$-solitons for
the KdV equation by using exponential stability property of the
semigroup generated by the linearized operator. The proof follows
the lines of recent stability argument of Mizumachi (\cite{M1})
and Mizumachi, Pego and Quintero (\cite{MPQ})
which show stability in the energy class by using strong linear stability
of solitary waves in exponentially weighted spaces.
\par
This gives an alternative proof of Merle and Vega (\cite{MV}) which shows
$L^2(\R)$-stability of $1$-solitons for the KdV equation 
by using the Miura transformation. 
Our argument is a refinement of Pego and Weinstein (\cite{PW}) that proves
asymptotic stability of solitary waves in
exponentially weighted spaces. We slightly improve the $H^1$-stability of
the modified KdV equation as well.
\end{abstract}
\maketitle
%%%%%%%%%%%%%%%%%%%%%%%%%%%%%%%%%%%%%%%%%%%%%%%%%%

\section{Introduction}
\label{sec:intro}
In this article, we discuss stability of solitary waves for
the generalized KdV equations
\begin{equation}
  \label{eq:gKdV}
  \pd_tu+\pd_x^3u+3\pd_x(u^p)=0\quad \text{for $(t,x)\in\R_+\times \R$.}
\end{equation}
The case $p=2$ corresponds to the KdV equation and
describes a motion of shallow water waves.
The case $p=3$ corresponds to the modified KdV equation.
The generalized KdV equations have a family of solitary wave solutions 
$\{\varphi_c(x-ct+\gamma)\mid c>0\,,\,\gamma\in\R\}$,
where
\begin{equation}
  \label{eq:soliton}
\varphi_c(x)=\alpha_c\sech^{2/(p-1)}(\beta_cx)\,,\quad
\alpha_c=\left(\frac{(p+1)c}{6}\right)^{1/(p-1)}\,,\quad
\beta_c=\frac{p-1}{2}\sqrt{c}\,,
\end{equation}
and $\varphi_c$ is a solution of
\begin{equation}
  \label{eq:B}
\varphi_c''-c\varphi_c+3\varphi_c^p=0\quad\text{for $x\in\R$.}
\end{equation}
Solitary waves play an important role among the solutions of \eqref{eq:gKdV}.
Indeed, solutions of the KdV equation resolve into a train of solitary
waves and an oscillating tail if the initial data are 
rapidly decreasing functions (see \cite{Sc}).
\par
Stability of solitary waves has been studied for many years since
Benjamin (\cite{Benjamin}) and Bona (\cite{Bona}).
Let us briefly introduce their result by Weinstein's argument (\cite{BSS,We}).
Eq.~\eqref{eq:gKdV} has conserved quantities
\begin{gather*}
\int_{\R}u^2(x)\,dx\quad\text{(the momentum),}\\
E(u)=\int_\R\left(\frac12(\pd_xu)^2(x)-\frac{3}{p+1}u^{p+1}(x)\right)\,dx\quad
\text{(the Hamiltonian).} 
\end{gather*}
Let $\mathcal{M}_c:=\{u\in H^1(\R)\mid \|u\|_{L^2}=\|\varphi_c\|_{L^2}\}$.
The set $\mathcal{M}_c$ is invariant under the flow generated
by \eqref{eq:gKdV} and the fact that $\varphi_c$ minimizes
$E|_{\mathcal{M}_c}$ for $p=2$, $3$ and $4$ implies orbital stability
of $\varphi_c$,
that is, for any $\eps>0$, there exists a $\delta>0$
such that if $u(0,x)=\varphi_c(x)+v_0(x)$ and $\|v_0\|_{H^1}<\delta$, then
$$\sup_{t\in\R}\inf_{\gamma\in\R}\|u(t,\cdot+\gamma)-\varphi_c\|_{H^1}<\eps\,.$$
\par

In order to study blow up problems of \eqref{eq:gKdV} with $p=5$,
Martel and Merle (\cite{MM1}) established a Liouville theorem for solutions
around solitary wave solutions of \eqref{eq:gKdV}.
Using the Liouville theorem, they prove that
solitary wave solutions are asymptotic stabile in $H^1_{loc}(\R)$
if $p=2$, $3$ and $4$.
Later, they gave a more direct proof by using a time global virial
estimate around solitary wave solutions (\cite{MM3}).
We refer \cite{MM4} for recent developments
such as inelastic collision of solitary waves for \eqref{eq:gKdV}
with $p=4$. 
\par

The $L^2(\R)$-stability of solitary wave solutions was first studied
by Merle and Vega (\cite{MV}) for the KdV equation by using the Miura
transformation and the fact that kink solutions of the defocusing mKdV
equation is stable to perturbations in $H^1(\R)$.  Indeed, a
combination of the Miura transformation and the Galilean
transformation
$$u(t,x)=M^+(v)(t,x-3ct)+\frac{c}{2}\,,\quad M^+(v)=\pd_xv-v^2\,,$$
is isomorphic between an $L^2$-neighborhood of a $1$-soliton $\varphi_c(x-ct)$
and an $H^1(\R)\times \R$-neighborhood of $(\psi_c(x+ct),c)$, where
$\psi_c=\sqrt{c/2}\tanh(\sqrt{c/2}x)$ and $\psi_c(x+ct)$ is a kink solution of
the defocusing mKdV $\pd_tv+\pd_x^3v-2\pd_x(v^3)=0$.
\par
Their result has been extended to prove $L^2(\R_x\times\T_y)$-stability of
line solitons for the KP-II equation (\cite{MT}),
$L^2(\R)$-stability of $1$-solitons for the $1d$-cubic NLS
(\cite{MPel}) and $L^2(\R)$-stability of $N$-solitons for KdV 
(\cite{AM}) and the structural stability of $1$-solitons for KdV
in $H^{-1}(\R)$ (\cite{Buck-Koch}).
These results rely on the B\"acklund transformations
which are peculiar to the integrable systems.
In this article, we will show $L^2(\R)$-stability of KdV $1$-solitons
without using the integrability of the KdV equation.
\par

It is common for the long wave models that the main solitary wave
moves faster than the other parts of the solution, which leads to
strong linear stability of solitary waves in exponentially weighted
spaces (see e.g.~\cite{MW,PS,PW,PW2}). This property was first used by
Pego and Weinstein (\cite{PW}) to prove asymptotic stability of solitary
waves of the generalized KdV equations
to exponentially localized perturbations.
Their argument turns out to be useful especially when solitary waves cannot be
characterized as (constrained) minimizers of conserved quantities.
Applying the idea of \cite{PW}, Friesecke and Pego (\cite{FP1,FP2,FP3,FP4})
proved that solitary waves to the FPU lattices are stable for exponentially
localized perturbations (see also \cite{MP}).
 Mizumachi (\cite{M1,M2}) extended \cite{FP2}
and prove stability of $N$-soliton like solutions
in the energy class by suitability decomposing the remainder part of
the solution into a sum of small waves which moves much slower than
the main waves and exponentially localized parts.
The argument has been applied to the Benney-Luke equation which is one of
bidirectional models of the water waves whose solitary waves
in the weak surface tension regime are infinitely indefinite saddle point of
the energy-momentum functional (\cite{MPQ}).
Recently, Mizumachi (\cite{M3}) has proved transversal stability of line
solitons for the KP-II equation in exponentially weighted space.
We expect the argument used in \cite{M1,MPQ} is useful to prove
stability of line solitons for the KP-II equation in unweighted spaces.
In this article, we will apply the argument used in \cite{M1,MPQ}
to the KdV equation and give an alternative
proof of the following result by Merle and Vega (\cite{MV}).
\begin{thm}[\cite{MV}]
  \label{thm:1}
Let $p=2$ and $c_0>\sigma>0$.
Then there exist positive constants $C$ and $\delta$
satisfying the following.
Suppose that $u(t,x)$ is a solution of \eqref{eq:gKdV} satisfying
$u(0,x)=\varphi_{c_0}(x)+v_0(x)$ and $\|v_0\|_{L^2}<\delta$. Then
there exist $c_+>0$ and a $C^1$-function $x(t)$ such that
\begin{gather}
  \label{eq:os-2}
\sup_{t\ge0} \left\|u(t,\cdot)-\varphi_{c_0}(\cdot-x(t))\right\|_{L^2}
\le C\|v_0\|_{L^2}^{1/2}\,,\\
\label{dotx}
c_+=\lim_{t\to\infty}\dot{x}(t)\,,\\
\label{c-c+}
|c_+-c_0|+\sup_{t\ge0}|\dot{x}(t)-c_0|\le C\|v_0\|_{L^2}\,,\\
  \label{eq:as-2}
\lim_{t\to\infty} \left\|u(t,\cdot)-\varphi_{c_+}(\cdot-x(t))
\right\|_{L^2(x\ge \sigma t)}=0\,.
\end{gather}
\end{thm}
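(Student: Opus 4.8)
The plan is to follow the Pego–Weinstein decomposition strategy adapted to the $L^2$ setting, as in \cite{M1,MPQ}. First I would write the solution as $u(t,x)=\varphi_{c(t)}(x-x(t))+v(t,x-x(t))$, where the modulation parameters $c(t)$ and $x(t)$ are chosen to impose two orthogonality (secular-mode) conditions on $v$, namely that $v(t,\cdot)$ be orthogonal in $L^2$ to the two generalized eigenfunctions of the adjoint linearized operator $\mathcal{L}_c^*$ at the $0$ eigenvalue (these correspond to translation and scaling/$\partial_c\varphi_c$). Standard modulation theory gives a $C^1$ system of ODEs for $\dot c$ and $\dot x - c$ with right-hand sides that are quadratic in $v$ plus terms controlled by $\|v\|$, as long as $v$ remains small; orbital stability in $L^2$ — which for $p=2$ one cannot read off from a variational characterization but which can be bootstrapped here — keeps $\|v(t)\|_{L^2}$ of size $O(\|v_0\|_{L^2})$ for all $t\geq 0$, giving \eqref{eq:os-2} (with the square root absorbing the weaker control one gets initially) and the $\sup_t|\dot x(t)-c_0|\leq C\|v_0\|_{L^2}$ half of \eqref{c-c+}.

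The heart of the argument is the exponentially weighted estimate. Introduce the weight $e^{a(x-x(t))}$ for a small $a>0$ with $0<a^2<$ (a threshold fixed by $\sigma$ and $c_0$), and set $\tilde v(t,x)=e^{a(x-x(t))}v(t,x-x(t))$. The key linear fact — the analogue of the semigroup estimate in \cite{PW} — is that the linearized KdV operator $\mathcal L_c$, conjugated by this weight and restricted to the complement of the secular modes, generates a $C^0$-semigroup decaying like $e^{-b t}$ for some $b=b(a,\sigma,c_0)>0$; this is proved via the Gearhart–Prüss/Kato resolvent criterion, bounding $(\mathcal L_c^a-\lambda)^{-1}$ uniformly on $\mathrm{Re}\,\lambda\geq -b$ using that the weight shifts the continuous spectrum $\{i(\xi^3-c\xi):\xi\in\R\}$ into $\mathrm{Re}<0$ and that $0$ is the only eigenvalue in the closed right half-plane for $p=2$ (no embedded or unstable eigenvalues, which is the place where $p=2$ is used crucially). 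Given this, I would run a Duhamel/energy estimate on $\tilde v$: the nonlinear term $3\partial_x((\varphi_c+v)^2-\varphi_c^2)=3\partial_x(2\varphi_c v+v^2)$, after weighting, splits into a part supported near the soliton (handled by the exponential localization of $\varphi_c$, which beats the growth of the weight) and the genuinely nonlinear $\partial_x(v^2)$ term; the latter is controlled by $\|\tilde v\|$ times $\|v\|_{L^\infty}$ or $\|v\|_{L^2}$-type norms that are already known to be small, plus a smoothing gain from the $\partial_x$ and the decaying semigroup. The modulation terms $\dot c\,\partial_c\varphi_c$ and $(\dot x-c)\partial_x\varphi_c$ are likewise exponentially localized, so they contribute harmlessly. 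This yields $\|\tilde v(t)\|_{L^2}\leq C\|v_0\|_{L^2}$ for all $t\geq0$ — in fact a decay statement $\|\tilde v(t)\|_{L^2}\to 0$ after using the spectral gap.

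From the weighted bound the asymptotic conclusions follow: the ODE for $\dot c$ has right-hand side bounded by $\|\tilde v(t)\|_{L^2}^2$ times an integrable-in-$t$ factor (because the nonlinearity is localized where the weight is favorable and $\tilde v$ decays), so $c(t)$ converges, and setting $c_+=\lim c(t)$ gives the remaining estimate in \eqref{c-c+} as well as \eqref{dotx} since $\dot x = c + O(\|v\|^2)$; then on the region $x\geq \sigma t$ the perturbation $v(t,x-x(t))$ is dominated by $e^{-a(x-x(t))}\|\tilde v(t)\|_{L^2}\to 0$ (using $x(t)\sim c_+t > \sigma t$ for large $t$), together with $\|\varphi_{c(t)}-\varphi_{c_+}\|_{L^2}\to 0$, which gives \eqref{eq:as-2}. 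I expect the main obstacle to be the interplay between the two smallness scales: one only controls $\|v\|_{L^2}$ (unweighted) a priori, while the weighted norm $\|\tilde v_0\|$ of a generic $L^2$ datum need not be finite — so, as in \cite{M1,MPQ}, one must first propagate the unweighted orbital bound, then generate weighted decay from it using the smoothing/localization of the nonlinear and modulation terms, and close a bootstrap in which the weighted norm stays $\lesssim\|v_0\|_{L^2}$ despite starting only from unweighted data; making this self-consistent, and checking that the local well-posedness theory for $L^2$ KdV meshes with the modulational decomposition, is the delicate part.
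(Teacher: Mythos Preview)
Your outline captures the Pego--Weinstein linear input correctly, but it is missing the one structural idea that makes the scheme work for $L^2$ data: the further splitting $v=v_1+v_2$. You yourself flag that $e^{ay}v_0$ need not lie in $L^2$, so you cannot run the weighted Duhamel argument on $v$ itself; the paper resolves this by letting $\tilde v_1$ be the solution of the \emph{full} KdV equation with data $v_0$, setting $v_1(t,y)=\tilde v_1(t,y+x(t))$ and $v_2=v-v_1$. Then $v_2(0)=0$, and since $u-\tilde v_1$ starts at $\varphi_{c_0}\in L^2_a$ one shows $v_2(t)\in L^2_a$ for all $t$. The orthogonality conditions \eqref{eq:orth1}--\eqref{eq:orth2} are imposed on $v_2$, not on $v$ (note $\zeta_c^1\notin L^2$, so $\langle v,\zeta_c^1\rangle$ is not even defined for general $v\in L^2$), and the semigroup/smoothing estimates of Section~\ref{sec:linear} are applied to $v_2$. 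The piece $v_1$ is controlled by a Kato-type virial identity, which yields $\int_0^\infty\|v_1(t)\|_{W_1}^2\,dt\lesssim\|v_0\|_{L^2}^2$; this square-integrable local decay near the soliton is precisely what feeds the forcing terms $N_1=6\varphi_c v_1$ and $\ell$ in the $v_2$ equation. Without this splitting your bootstrap cannot close, and the vague phrase ``generate weighted decay from the unweighted bound'' hides exactly this mechanism.

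There is also a genuine error in your argument for \eqref{eq:as-2}. The region $\{x\ge\sigma t\}$ lies largely \emph{behind} the soliton, since $x(t)\sim c_+t>\sigma t$; in the moving frame this is $y\ge\sigma t-x(t)\to-\infty$, where $e^{ay}$ is small, so decay of $\|e^{a\cdot}v\|_{L^2}$ gives no control there. The paper proves \eqref{eq:as-2} by a separate virial/monotonicity computation on $v$ in the unweighted space (multiplying \eqref{eq:v} by $\chi_a(y-y_1(t))v$ and integrating), combined with the already-established decay of $\|v_1\|_W$ and $\|v_2\|_{L^2_a}$ to handle the localized forcing $J(t)$. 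A smaller point: $L^2$ conservation only gives $\|v(t)\|_{L^2}^2\lesssim|c(t)-c_0|+\|v_0\|_{L^2}$, hence $\|v(t)\|_{L^2}=O(\|v_0\|_{L^2}^{1/2})$, not $O(\|v_0\|_{L^2})$; the exponent $1/2$ in \eqref{eq:os-2} is sharp for this argument, not slack to be absorbed.
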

\begin{rem}
  The $L^2(\R)$ well-posedness of the KdV equation was proved by
Bourgain (\cite{Bourgain}).
\end{rem}
\begin{rem}
We expect that stability argument of $N$-solitary wave solutions to the
FPU lattices (\cite{M2}) is applicable to $N$-solitary wave solutions
of the long wave models as well.  
\end{rem}

For the mKdV equation, 
we slightly improve orbital stability of $1$-solitons in $H^1(\R)$.
Note that the mKdV equation is well-posed in $H^s(\R)$
with $s\ge 1/4$. See \cite{KPV1,KPV2}.
\begin{thm}
  \label{thm:2}
Let $p=3$ and $c_0>\sigma>0$.
Then there exist positive constants $C$ and $\delta$
satisfying the following.
Suppose that $u(t,x)$ is a solution of \eqref{eq:gKdV} satisfying
$u(0,x)=\varphi_{c_0}(x)+v_0(x)$ and $\|v_0\|_{L^2}^{3/4}\|v_0\|_{H^1}^{1/4}<\delta$.
Then there exist $c_+>0$ and a $C^1$-function $x(t)$ such that
\begin{gather}
  \label{eq:os-3}
\sup_{t\ge0} \left\|u(t,\cdot)-\varphi_{c_0}(\cdot-x(t))\right\|_{L^2}
\le C\|v_0\|_{L^2}^{1/2}\,,\\
c_+=\lim_{t\to\infty}\dot{x}(t)\,,\\
\label{c-c+2}
|c_+-c_0|+\sup_{t\ge0}|\dot{x}(t)-c_0|\le C\|v_0\|_{L^2}\,,\\
  \label{eq:as-3}
\lim_{t\to\infty} \left\|u(t,\cdot)-\varphi_{c_+}(\cdot-x(t))
\right\|_{L^2(x\ge \sigma t)}=0\,.
\end{gather}
\end{thm}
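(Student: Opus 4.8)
The plan is to follow the proof of Theorem~\ref{thm:1} almost verbatim, the only essential new point being the treatment of the cubic nonlinearity of the mKdV equation (the case $p=3$ of \eqref{eq:gKdV}), which is exactly what produces the mixed norm $\|v_0\|_{L^2}^{3/4}\|v_0\|_{H^1}^{1/4}$ in the hypothesis. First I would introduce the modulation, writing $u(t,x)=\varphi_{c(t)}(y)+v(t,y)$ with $y=x-x(t)$ and fixing $c(t),x(t)$ by imposing two orthogonality conditions on $v(t,\cdot)$ against the generalized kernel of the adjoint of the linearized operator $\mL_{c}v:=-\pd_y^3v+c\,\pd_yv-9\,\pd_y(\varphi_c^2v)$, whose own generalized kernel is $\spann\{\pd_y\varphi_c,\pd_c\varphi_c\}$. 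By the implicit function theorem $c(t)$ and $x(t)$ are $C^1$ as long as $\|v(t)\|_{L^2}$ stays small, $v$ satisfies
\[
\pd_tv=\mL_{c(t)}v+(\dot x-c)(\pd_y\varphi_c+\pd_yv)-\dot c\,\pd_c\varphi_c-9\,\pd_y(\varphi_cv^2)-3\,\pd_y(v^3),
\]
and pairing with the adjoint kernel elements gives the modulation equations, from which $|\dot x(t)-c(t)|+|\dot c(t)|$ is controlled by the localized norm $\|\sech(\beta_{c}y)\,v(t)\|_{L^2}$ plus quadratic terms; once that norm is shown to be time-integrable, $\dot x$ has a limit $c_+$ and \eqref{c-c+2} follows.

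The second ingredient is the Pego--Weinstein strong linear stability, used exactly as for $p=2$: conjugating $\mL_{c}$ by a weight $e^{ay}$ for a suitable $a>0$ gives an operator that, on the spectral complement of its two-dimensional generalized kernel, generates a semigroup decaying like $e^{-bt}$ with $b=b(\sigma)>0$. Following \cite{PW,M1,MPQ}, I would decompose $v$ into an exponentially localized component — estimated by Duhamel against this semigroup together with the modulation bounds above — and a component supported essentially in $\{x\le\sigma t\}$, and use a Kato-type monotonicity estimate for the $L^2$-mass of $v$ contained in a half-line moving at a speed between $\sigma$ and $c_0$ to show that this mass is almost nonincreasing; this yields both \eqref{eq:as-3} and the time-integrability of the localized norm. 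The exponent $1/2$ in \eqref{eq:os-3} comes, just as for $p=2$, from conservation of $\int u^2\,dx$ together with the orthogonality of the modulation step and the smooth but only Lipschitz dependence $\|\varphi_c\|_{L^2}^2=\tfrac43\sqrt{c}$: one gets $\|v(t)\|_{L^2}^2\lesssim\|v_0\|_{L^2}^2+|c(t)-c_0|\lesssim\|v_0\|_{L^2}$.

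The genuinely new part is the cubic nonlinearity. The $L^2$ identity for $v$ reads
\[
\tfrac{d}{dt}\tfrac12\|v(t)\|_{L^2}^2=-9\int\varphi_c\varphi_c'\,v^2\,dy+(\dot x-c)\la\varphi_c',v\ra-\dot c\,\la\pd_c\varphi_c,v\ra-3\int\varphi_c'\,v^3\,dy,
\]
where the first term is localized (absorbed by the local decay) and, for the cubic term, Gagliardo--Nirenberg gives $|\int\varphi_c'v^3\,dy|\lesssim\|\sech(\beta_cy)v\|_{L^2}\|v\|_{L^\infty}^2\lesssim\|\sech(\beta_cy)v\|_{L^2}\|v\|_{L^2}\|v\|_{H^1}$; similarly the weighted identity for the localized component produces a term $\tfrac{9a}{2}\int e^{2ay}v^4\,dy\lesssim\|v\|_{L^2}\|v\|_{H^1}\|v\|_{L^2_a}^2$, with $\|v\|_{L^2_a}^2:=\int e^{2ay}v^2\,dy$, while the quadratic nonlinearity $\pd_y(\varphi_cv^2)$ contributes terms of the same or lower order. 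So the scheme closes once $\|v(t)\|_{H^1}$ is bounded, and I would obtain that boundedness in parallel from the Weinstein functional $E(u)+\tfrac{c(t)}{2}\int u^2\,dx$: subtracting its value at $\varphi_{c(t)}$ leaves $\la L_{c(t)}v,v\ra$, with $L_{c}:=-\pd_y^2+c-9\varphi_c^2$ coercive modulo its kernel direction and the $\varphi_c$-direction (using $\tfrac{d}{dc}\|\varphi_c\|_{L^2}^2>0$), plus cubic and quartic corrections $O\big(\|v\|_{H^1}^{1/2}\|v\|_{L^2}^{5/2}+\|v\|_{H^1}^2\|v\|_{L^2}^2\big)$; absorbing the quartic part once $\|v(t)\|_{L^2}$ is small and using conservation of $E$ and of $\int u^2$ gives $\|v(t)\|_{H^1}^2\lesssim\|v_0\|_{H^1}^2+|c(t)-c_0|\lesssim\|v_0\|_{H^1}^2+\|v_0\|_{L^2}$. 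Feeding this back, every nonlinear term is then controlled by $\|v_0\|_{L^2}^{3/4}\|v_0\|_{H^1}^{1/4}$ — equivalently, by Gagliardo--Nirenberg, by $\|v_0\|_{L^4}$ — to a positive power, times the bootstrap quantities.

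The hard part will be precisely this coupling: the cubic estimates need an $H^1$ bound on $v$; that bound needs the modulation to be well-defined (hence $\|v(t)\|_{L^2}$ small) and the quartic corrections absorbed (again $\|v(t)\|_{L^2}$ small); and the bound on $\|v(t)\|_{L^2}$ needs the cubic estimates. One therefore has to run $\sup_t\|v(t)\|_{L^2}$, $\sup_t\|v(t)\|_{H^1}$, $\int_0^t\|\sech(\beta_cy)v\|_{L^2}\,ds$ and $\sup_t e^{bt}\|v(t)\|_{L^2_a}$ (after removing the secular modes) in a single bootstrap, and check that its consistency threshold is exactly the smallness of $\|v_0\|_{L^2}^{3/4}\|v_0\|_{H^1}^{1/4}$. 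Once this is in place, global existence in $H^1$ (also guaranteed by \cite{KPV1,KPV2} with the a priori bound above), the limit $c_+=\lim_{t\to\infty}\dot x(t)$, and \eqref{eq:as-3} follow as in the case $p=2$.
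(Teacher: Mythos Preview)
Your overall strategy is close to the paper's, but there is one structural point you have inverted that matters. You impose the symplectic orthogonality conditions on $v$ itself, against the generalized kernel of $\mL_c^*$. But those adjoint kernel elements are the $\zeta_c^i$ of \eqref{eq:defzetas}, and $\zeta_c^1$ tends to a nonzero constant as $y\to+\infty$; it lies in $L^2_{-a}$, not in $L^2$. Hence the pairing $\la v,\zeta_c^1\ra$ is only defined when $v\in L^2_a$, which a generic $v_0\in H^1(\R)$ is not. The paper's fix---and this is precisely the mechanism of \cite{M1,MPQ} that you cite---is to split $v=v_1+v_2$ \emph{before} modulating: $\tv_1$ is the solution of the full mKdV with data $v_0$ (so $\|v_1(t)\|_{L^2}=\|v_0\|_{L^2}$ and the virial identity controls $\|v_1\|_{L^2(0,T;W_1)}$), while $v_2(t,y)=u(t,x)-\tv_1(t,x)-\varphi_{c(t)}(y)$ satisfies $v_2(0)=0$ and, by Lemma~\ref{lem:difu-v1}, lies in $C([0,\infty);L^2_a)$. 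The orthogonality conditions \eqref{eq:orth1}--\eqref{eq:orth2} are then imposed on $v_2$, and the Pego--Weinstein semigroup bound is applied to $Q_{c_0}v_2$ via Duhamel. Your description (``a component supported essentially in $\{x\le\sigma t\}$'') is not this: $v_1$ is not supported anywhere in particular, it is a small nonlinear solution whose interaction with the soliton is square-integrable in time.

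There is a further $p=3$--specific wrinkle you do not mention. The cubic piece $N_{22}=3v_2(3v_1^2+3v_1v_2+v_2^2)$ feeds into the modulation equation for $\dot{x}-c$ with a factor $\|v\|_{L^\infty}$, and since only $\|v_0\|_{L^2}^{3/4}\|v_0\|_{H^1}^{1/4}$ is assumed small (not $\|v_0\|_{H^1}$), one cannot close a bootstrap on $\sup_t|\dot{x}-c|$ directly. The paper introduces an auxiliary phase $\gamma(t)$ (Lemma~\ref{lem:modulation-3}) obtained by dropping $N_{22}$ from the $\dot{x}$-equation; then $\dot\gamma-c=O(\|v_1\|_W+\|v_2\|_{L^2_a}^2)$ carries no $H^1$ factor, the virial lemma for $v_1$ is run with $\tilde{x}=\gamma$ (Lemma~\ref{lem:v1-b}), and $|x(t)-\gamma(t)|\lesssim \bM_1(T)^2+\bM_2(T)^2$ is recovered a posteriori. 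Your $H^1$ control via the Weinstein functional and the appearance of $\|v_0\|_{L^2}^{3/4}\|v_0\|_{H^1}^{1/4}$ are essentially Lemmas~\ref{lem:v-H1} and~\ref{lem:v2est-b}, but to close on the cubic term $v_2^3$ in $L^2_a$ the paper uses the sharper interpolation \eqref{eq:w1} with $\theta=5/7$ together with Proposition~\ref{prop:l-smoothing}, rather than the crude $\|v\|_{L^\infty}^2\lesssim\|v\|_{L^2}\|v\|_{H^1}$ you propose.
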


Finally, let us introduce several notations. 
Let $L^p_a$ ($1\le p\le \infty$) and $H^k_a$ ($k\in\N$) be exponentially
weighted spaces, writing
$$L^p_a=\{g\mid e^{ax}g\in L^p(\R)\}\,,\quad
H^k_a=\{g \mid \pd_x^jg\in L^2_a\text{ for $0\le j\le k$}\}\,,$$
with norms
\begin{equation*}
\|g\|_{L^p_a}=\|e^{a\cdot}g\|_{L^p(\R)}\,,\quad
\|g\|_{H^k_a}=\left(\sum_{0\le j\le k}\|\pd_x^jg\|_{L^2_a}^2\right)^{1/2}\,.
\end{equation*}
We define  $\la\cdot,\cdot\ra$ and $\la\cdot,\cdot\ra_{t,x}$ as
$$\la u_1,u_2\ra=\int_\R u_1(x)u_2(x)\,dx\,,\quad
\la v_1,v_2\ra_{t,x}=\int_\R \int_\R v_1(t,x)v_2(t,x)\,dxdt\,.$$
For any Banach spaces $X$, $Y$, we denote by $B(X,Y)$ the space of
bounded linear operators from $X$ to $Y$. We abbreviate
$B(X,X)$ as $B(X)$.
We use $a\lesssim b$ and $a=O(b)$ to mean that there exists a
positive constant such that $a\le Cb$. 
Various constants will be simply denoted
by $C$ and $C_i$ ($i\in\mathbb{N}$) in the course of the
calculations.

%%%%%%%%%%%%%%%%%%%%%%%%%%%%%%%

%%%%%%%%%%%%%%%%%%%%
\section{Linear stability of $1$-solitons}
\label{sec:linear}
In this section, we recall strong linear stability 
of $1$-soliton solutions in the exponentially weighted space $L^2_a$
for the sake of self-containedness.
Let
$$u(t,x)=\varphi_c(y)+v(t,y)\,,\quad y=x-ct$$
and linearize nonlinear terms of \eqref{eq:gKdV} around $v=0$. Then we have
\begin{equation}
  \label{eq:LKdV-1}
  \pd_tv+\mL_{c}v=0\,,
\end{equation}
where $\mL_c=\pd_y(\pd_y^2-c+f'(\varphi_c))$ with $f(u)=3u^p$.
The linearized operator $\mL_c$ has a generalized kernel associated with the
infinitesimal changes in the location and the speed of the solitary
waves.  Let $\xi^1_c(y)=\pd_y\varphi_c(y)$, $\xi^2_c(y)=\pd_c\varphi_c(y)$ and
\begin{equation}
  \label{eq:defzetas}
\zeta^1_c(y)=
-\theta_1(c)\int_{-\infty}^y\pd_c\varphi_c(y_1)\,dy_1+\theta_2(c)\varphi_c(y)
\,,\quad \zeta_c^2(y)=\theta_1(c)\varphi_c(y)\,,
\end{equation}
where $\theta_1(c)=1/\int_\R \varphi_c(y)\pd_c\varphi_c(y)\,dy$
and $\theta_2(c)=\theta_1(c)^2\left(\int_\R\pd_c\varphi_c(y)\,dy\right)^2/2$.
Differentiating \eqref{eq:B} twice with respect to $y$ and $c$, we have
\begin{equation}
  \label{eq:xis}
\mL_c\xi_c^1=0\,,\quad  \mL_c\xi_c^2=\xi_c^1\,.  
\end{equation}
In view of \eqref{eq:xis} and the fact that (formally)
$\pd_y\mL_c^*=-\mL_c\pd_y$,
\begin{equation}
  \label{eq:etas}
\mL_c^*\zeta_c^1=\zeta_c^2\,,\quad  \mL_c^*\zeta_c^2=0\,.
\end{equation}
For $i$, $j$=$1$, $2$,
$$\int_\R\xi_c^i(y)\zeta_c^j(y)\,dy=\delta_{ij}\,.$$
\par
Since $d\|\varphi_c\|_{L^2}^2/dc\ne0$ for $p\ne 5$,
the algebraic multiplicity of the eigenvalue $0$ is two if
 $p\ne 5$.
Let  $P_c:L^2_a\to L^2_a$ ($0<a<\sqrt{c}$) be the spectral projection
associated with the generalized kernel of $\mL_c$ and let $Q_c=I-P_c$.
Then 
\begin{gather*}
P_cv= \la v\,,\, \zeta_c^1\ra \xi_c^1+\la v\,,\, \zeta_c^2\ra \xi_c^2\,,\\
\operatorname{Range}(\mL_cP_c) \subset \operatorname{Range}(P_c)\,,\quad
\operatorname{Range}(\mL_cQ_c) \subset \operatorname{Range}(Q_c)\,.
\end{gather*}
\par
Next we recall the spectrum of the linearized operator $\mL_c$.  The
spectrum of $\mL_c$ in $L^2(\R)$ consists of $i\R$ (see \cite{PW0}).
However if $0<a<\sqrt{c}$, the essential spectrum of
$\mL_c$ in $L^2_a$ locates in the stable half plane.
Indeed, the spectrum of $\mL_c$ in $L^2_a$ is equivalent to the spectrum of
$e^{a\cdot}\mL_ce^{-a\cdot}$ in $L^2(\R)$ and 
by Weyl's essential spectrum theorem, 
$$\sigma_{ess}(\mL_c)=\{ip(\xi+ia)\mid \xi\in\R\}
\,,\quad p(\xi)=-(\xi^3+c\xi)\,.$$
Since 
\begin{equation}
  \label{eq:pia}
p(\xi+ia)=-\{\xi^3+(c-3a^2)\xi\}-i\{a(c-a^2)+3a\xi^2\}\,,  
\end{equation}
we have  $\sigma_{ess}(\mL_c)\subset
\{\lambda\in\C\mid \Re\lambda\ge a(c-a^2)>0\}$ if $a\in(0,\sqrt{c})$.

The complement of $\sigma_{ess}(\mL_c)$
consists of two connected components.
We denote by $\Omega(a)$ one of these components which contains
the unstable half plane. Pego and Weinstein 
\cite[Proposition~2.6, Theorem~3.1]{PW} prove spectral stability
of $\mL_c$ in the exponentially weighted space $L^2_a$.
\begin{prp}[Spectral stability of 1-solitons (\cite{PW})]
  \label{spectral-stability}
Let $p=2$ or $3$. Suppose $c>0$ and $a\in(0,\sqrt{c})$.
Then the operator $\mL_c$ in $L^2_a$ has no eigenvalue
in $\Omega(a)$ other than $0$ whose algebraic multiplicity is two
and 
$\sigma_{ess}(\mL_c)\subset\{\lambda\in\C\mid \Re\lambda\ge a(c-a^2)>0\}$.
\end{prp}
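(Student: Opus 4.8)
The assertion on $\sigma_{ess}(\mL_c)$ is already contained in the discussion above (Weyl's theorem together with \eqref{eq:pia}), so the substance is to control the point spectrum of $\mL_c$ on $L^2_a$ inside $\Omega(a)$; the plan is to reproduce, in a condensed form, the Evans function argument of \cite{PW}. First I would pin down the algebraic multiplicity of $0$. Integrating $\mL_c v=0$ gives $(\pd_y^2-c+f'(\varphi_c))v=k$ for a constant $k$, which must vanish (otherwise $v$ is asymptotic to $-k/c$ at $\pm\infty$ and hence not in $L^2_a$); then $v$ solves $(\pd_y^2-c+f'(\varphi_c))v=0$, and since $a<\sqrt c$ the only such $v$ lying in $L^2_a$ is a multiple of $\varphi_c'=\xi_c^1$ (the remaining solution grows like $e^{\sqrt c\,y}$). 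Thus $\ker\mL_c$ is one-dimensional. Since $\xi_c^2=\pd_c\varphi_c\in L^2_a$ and $\mL_c\xi_c^2=\xi_c^1$ by \eqref{eq:xis}, the Jordan chain has length at least two, and exactly two: if $\mL_c w=\xi_c^2$ were solvable in $L^2_a$, pairing with $\zeta_c^2$ and using $\mL_c^*\zeta_c^2=0$ from \eqref{eq:etas} would give $1=\la\xi_c^2,\zeta_c^2\ra=\la w,\mL_c^*\zeta_c^2\ra=0$, a contradiction --- here $\la\xi_c^2,\zeta_c^2\ra=\theta_1(c)\int_\R\varphi_c\,\pd_c\varphi_c\,dy=1$ is legitimate precisely because $\tfrac{d}{dc}\|\varphi_c\|_{L^2}^2\ne0$. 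Hence the generalized kernel is $\spann\{\xi_c^1,\xi_c^2\}$ and $0$ has algebraic multiplicity two.

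Next I would set up the Evans function. Recast the eigenvalue equation $\mL_c v=\lambda v$ as a first order system $Y'=A(y;\lambda)Y$ on $\C^3$ in the variables $(v,\ v',\ v''-cv+f'(\varphi_c)v)$; the limiting matrix $A_\infty(\lambda)$ has characteristic equation $\mu^3-c\mu-\lambda=0$, with roots $\{0,\pm\sqrt c\}$ at $\lambda=0$. For $\lambda\in\Omega(a)$ exactly one root satisfies $\Re\mu<-a$ and two satisfy $\Re\mu>-a$ --- this is what characterizes $\Omega(a)$, a root reaching the line $\Re\mu=-a$ meaning $\lambda\in\sigma_{ess}(\mL_c)$. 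Consequently a nonzero $v\in L^2_a$ with $\mL_c v=\lambda v$ must decay, as $y\to+\infty$, along the one-dimensional subspace of $A_\infty(\lambda)$ attached to the root with $\Re\mu<-a$ and, as $y\to-\infty$, inside the two-dimensional subspace attached to the roots with $\Re\mu>-a$. Choosing analytic bases $Y^+(y;\lambda)$ of the solutions decaying at $+\infty$ and $Y^-_1(y;\lambda),Y^-_2(y;\lambda)$ of those decaying at $-\infty$ (the analytic continuation on $\Omega(a)$ being standard, cf.\ \cite{PW}), I would set $D(\lambda)=\det\bigl(Y^+(0;\lambda),Y^-_1(0;\lambda),Y^-_2(0;\lambda)\bigr)$: then $D$ is analytic on $\Omega(a)$, $D(\lambda)=0$ exactly at the eigenvalues of $\mL_c$ on $L^2_a$, and the order of a zero equals the algebraic multiplicity.

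The conclusion is then read off from the global behaviour of $D$ on $\Omega(a)$, which is where the real work lies. After a suitable normalization $D$ is real on $\Omega(a)\cap\R$ and $D(\lambda)\to1$ as $|\lambda|\to\infty$ in $\Omega(a)$, so $D$ has no zero far out --- this reflects invertibility of $\lambda-\mL_c$ on $L^2_a$ for $|\lambda|$ large in $\Omega(a)$, the term $\pd_y(f'(\varphi_c)\,\cdot)$ being a relatively bounded perturbation, with small relative bound, of the boundedly invertible constant-coefficient part $\pd_y(\pd_y^2-c)-\lambda$. By the first step $D$ has a zero of order exactly two at $\lambda=0$, and the standard formula for $D''(0)$ identifies its sign with that of $\tfrac{d}{dc}\|\varphi_c\|_{L^2}^2$ up to a fixed nonzero factor; since $p=2$ or $3$ one has $\tfrac{d}{dc}\|\varphi_c\|_{L^2}^2>0$, which puts $D$ in the stable configuration at the origin. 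Combining these facts with the reality of $D$ on the real axis and the absence of an ``edge'' zero on the boundary $\partial\Omega(a)=\sigma_{ess}(\mL_c)$ (again a consequence of $p<5$), an argument-principle/parity count of the zeros of $D$ over the boundary of $\Omega(a)\cap\{|\lambda|<R\}$ for $R$ large --- with $D\approx1$ on the outer arc $|\lambda|=R$ and $D\ne0$ on the $\sigma_{ess}$-part --- shows that the double zero at $0$ is the only zero of $D$ in $\Omega(a)$; equivalently $\mL_c$ has no eigenvalue in $\Omega(a)$ other than $0$. I expect this last count --- turning the single scalar inequality $\tfrac{d}{dc}\|\varphi_c\|_{L^2}^2>0$ into the exclusion of all nonzero real and complex eigenvalues --- to be the main obstacle, and it is precisely here that the hypothesis $p=2$ or $3$ (i.e.\ $p<5$) is used.
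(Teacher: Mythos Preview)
The paper does not give its own proof of this proposition: it is quoted verbatim as Pego--Weinstein's result (the sentence preceding the statement cites \cite[Proposition~2.6, Theorem~3.1]{PW}), and only the essential-spectrum part is argued in the text via Weyl's theorem and \eqref{eq:pia}. So there is no in-paper proof to compare against; your write-up is a condensed account of the Evans-function approach of \cite{PW} itself, which is the correct provenance.

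As a sketch of \cite{PW} your outline is accurate in structure: the computation of the algebraic multiplicity at $0$ is fine (and the use of $\la\xi_c^2,\zeta_c^2\ra=1$ with $\mL_c^*\zeta_c^2=0$ to terminate the Jordan chain is exactly the right obstruction), the first-order reformulation and the root count $\mu^3-c\mu-\lambda=0$ are set up correctly, and the normalization $D(\lambda)\to1$ at infinity together with the identification of $\operatorname{sgn}D''(0)$ with $\operatorname{sgn}\tfrac{d}{dc}\|\varphi_c\|_{L^2}^2$ are the standard ingredients. The one place your sketch is genuinely incomplete is the last step: a parity/argument-principle count by itself, fed only with $D(\lambda)\to1$, $D$ real on $\R$, and $D''(0)>0$, does not exclude an even number of additional zeros (e.g.\ a complex-conjugate pair in $\Omega(a)$). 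In \cite{PW} this gap is closed not by a pure counting argument but by supplementary information --- for $p=2,3$ one has explicit formulas for $D(\lambda)$ coming from the solvable potentials, and in the general subcritical case one tracks the zero set of $D$ under a homotopy in $p$ from the free problem. If you intend this as more than a pointer to \cite{PW}, that is the step that needs to be filled in.
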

\par
Let $R(\lambda)=(i\lambda+\mL_c)^{-1}$ and
$\Sigma_b:=\{\lambda\in\C\mid \Im \lambda<b\}$.
The spectral stability of $\mL_c$ implies that
\begin{equation}
  \label{eq:rbound0}
\sup_{\lambda\in \Sigma_b} \left\|R(\lambda)Q_c\right\|_{B(L^2_a)}<\infty
\quad\text{for $b$ satisfying $0<b<a(c-a^2)$.}
\end{equation}
Let
$\mL_0=\pd_y^3-c\pd_y$, $R_0(\lambda)=(i\lambda+\mL_0)^{-1}$ and
$V=\pd_y(f'(\varphi_c)\cdot)$.
By Plancherel's theorem and \eqref{eq:pia},
\begin{equation}
\label{eq:resolvent}
\begin{split}
\|R_0(\lambda)\|_{B(L^2_a,H^k_a)}
 \lesssim & \sup_{\xi\in\R}
\frac{1+|\xi+ia|^k}{|\lambda+p(\xi+ia)|}
\\\lesssim & (1+|\lambda|)^{-(2-k)/3}
\quad\text{for $\lambda\in\Sigma_b$ and $0\le k\le2$.}  
\end{split}
\end{equation}
In view of \eqref{eq:resolvent} and the fact that $f'(\varphi_c)$
is exponentially localized, we see that  $VR_0(\lambda)$ is compact on $L^2_a$
and that $I+VR_0(\lambda)$ has a bounded inverse unless $\lambda$ is an
eigenvalue of $\mL_c$. Hence it follows from
Proposition~\ref{spectral-stability} that
$\|(I+VR_0(\lambda))^{-1}Q_c\|_{B(L^2_a)}$ is bounded on any compact
subset of $\Sigma_b$. Moreover, Eq.~\eqref{eq:resolvent} with $k=1$ implies that
$\|VR_0(\lambda)\|_{B(L^2_a)}\le \frac12$ for large $\lambda$.
Thus we have
\begin{equation}
\label{eq:r-bound}
\sup_{\lambda\in \Sigma_b} \left\|R(\lambda)Q_c\right\|_{B(L^2_a;H^2_a)}
\lesssim  \sup_{\lambda\in\Sigma_b}\|R_0(\lambda)\|_{B(L^2_a;H^2_a)}<\infty
\end{equation} 
since $R(\lambda)=R_0(\lambda)(I+VR_0(\lambda))^{-1}$.
\par
Once \eqref{eq:rbound0} is established,
the Gearhart-Pr\"uss theorem (\cite{Gh,Pruss}) on $C_0$-semigroups
on Hilbert spaces implies exponential linear stability of $e^{-t\mL_c}Q_c$.
\begin{prp}[Linear stability of 1-solitons (\cite{PW})]
\label{prop:linear}
Let $p=2$ or $3$, $c>0$ and $a\in(0,\sqrt{c})$.
Then there exist positive constants $K$ and $b$ such that for every 
$f\in L^2_a$ and $t\ge0$,
\begin{equation}
  \label{eq:expdecay-1}
\|e^{-t\mL_c}Q_cf\|_{L^2_a}\le Ke^{-bt} \|f\|_{L^2_a}\,.
\end{equation}
\end{prp}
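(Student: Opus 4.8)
The plan is to deduce \eqref{eq:expdecay-1} from the uniform resolvent bound \eqref{eq:rbound0} (or its refinement \eqref{eq:r-bound}) by invoking the Gearhart-Pr\"uss theorem (\cite{Gh,Pruss}) for $C_0$-semigroups on Hilbert spaces; the only real work is to put its hypotheses in place. First, $L^2_a$ is a Hilbert space, isometric to $L^2(\R)$ via $g\mapsto e^{a\cdot}g$, and $-\mL_c$ generates a $C_0$-semigroup $e^{-t\mL_c}$ on $L^2_a$. This last point, which is contained in \cite{PW}, holds because, after conjugation by $e^{a\cdot}$, the generator $-\mL_0$ becomes the Fourier multiplier with symbol $-ip(\xi+ia)$, whose real part equals $-a(c-a^2)-3a\xi^2$ by \eqref{eq:pia}; thus $-\mL_0$ generates a $C_0$-semigroup on $L^2_a$ satisfying the smoothing bound $\|e^{-t\mL_0}\|_{B(L^2_a,H^1_a)}\lesssim t^{-1/2}$ for $0<t\le1$, and the first-order, exponentially localized perturbation $V=\pd_y(f'(\varphi_c)\cdot)$ is then absorbed by a Miyadera-Voigt argument. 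Since $Q_c$ is a bounded projection on $L^2_a$ commuting with $\mL_c$, it commutes with the resolvent $R(\lambda)=(i\lambda+\mL_c)^{-1}$ and with $e^{-t\mL_c}$; hence $\Range(Q_c)$ is a closed invariant subspace and $e^{-t\mL_c}$ restricts to a $C_0$-semigroup on it whose generator is $A:=-\mL_c|_{\Range(Q_c)}$.

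Next I would translate \eqref{eq:rbound0} into the form required for a generator. Fix $b\in(0,a(c-a^2))$ and set $\mu=i\lambda$, so that the region $\Sigma_b=\{\Im\lambda<b\}$ corresponds to the half plane $\{\Re\mu>-b\}$ and $i\lambda+\mL_c$ to $\mu+\mL_c$. On the invariant subspace $\Range(Q_c)$ the operator $\mu+\mL_c$ is precisely $\mu-A$, so \eqref{eq:rbound0} says exactly that $\{\Re\mu>-b\}\subset\rho(A)$ and $\sup_{\Re\mu>-b}\|(\mu-A)^{-1}\|_{B(\Range(Q_c))}<\infty$. Since $\Range(Q_c)$ is a Hilbert space, the Gearhart-Pr\"uss theorem then gives that the growth bound $\omega_0$ of the restricted semigroup satisfies $\omega_0\le-b$; hence for every $b'\in(0,b)$ there is $K=K(b')$ with $\|e^{-t\mL_c}g\|_{L^2_a}\le Ke^{-b't}\|g\|_{L^2_a}$ for all $g\in\Range(Q_c)$ and $t\ge0$. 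Taking $g=Q_cf$ and using $\|Q_cf\|_{L^2_a}\lesssim\|f\|_{L^2_a}$ gives \eqref{eq:expdecay-1}, after renaming $b'$ as $b$.

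The argument meets no genuine obstacle, since Proposition~\ref{spectral-stability} and the resolvent bound \eqref{eq:rbound0} already encapsulate all the spectral input. Two points nonetheless require a little care. The first is verifying that $-\mL_c$ generates a $C_0$-semigroup on the \emph{weighted} space $L^2_a$, which rests on the parabolic-type smoothing of the conjugated free flow together with the relative smallness of $V$ and is essentially carried out in \cite{PW}. The second is the bookkeeping that matches the parametrization of \eqref{eq:rbound0}, stated for $(i\lambda+\mL_c)^{-1}$ over $\Sigma_b$, with the resolvent of the generator $A$ on a half plane lying to the left of the imaginary axis, so that the hypotheses of the Gearhart-Pr\"uss theorem hold verbatim.
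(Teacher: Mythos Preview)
Your proposal is correct and follows exactly the route the paper takes: the paper simply states that once the uniform resolvent bound \eqref{eq:rbound0} is established, the Gearhart--Pr\"uss theorem on $C_0$-semigroups on Hilbert spaces yields \eqref{eq:expdecay-1}. You have merely fleshed out the verification of the hypotheses (that $L^2_a$ is a Hilbert space, that $-\mL_c$ generates a $C_0$-semigroup on $L^2_a$, and that one may restrict to the invariant subspace $\Range(Q_c)$), which the paper leaves implicit or refers to \cite{PW} for.
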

Exponential stability of $e^{-t\mL_c}Q_c$ reflects that the largest
solitary wave moves faster to the right than any other waves.
\par
Kato \cite{K} tells us that $e^{-t\pd_x^3}$ has a strong smoothing effect
on $L^2_a$. We will use the property to deal with nonlinear terms.
\begin{crl}
  \label{cor:s-smoothing}
Let $p=2$ or $3$, $c>0$ and $0<a<\sqrt{c}$.
Then there exist positive constants $K'$ and $b'$ such that
\begin{align}
  \label{eq:smoothing-a}
& \|e^{-t\mL_c}Q_c\pd_y^j f\|_{L^2_a}\le K'e^{-b't}t^{-(2j+1)/4} \|f\|_{L^1_a}
\quad\text{for $j=0$, $1$, $f\in L^1_a$ and $t>0$,}\\
\label{eq:smoothing-b}
& \|e^{-t\mL_c}Q_c\pd_y f\|_{L^2_a}\le K'e^{-b't}t^{-1/2} \|f\|_{L^2_a}
\quad\text{for $f\in L^2_a$ and $t>0$.}
\end{align}
\end{crl}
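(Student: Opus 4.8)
The plan is to obtain all three bounds from (i) the Kato‑type smoothing of the \emph{free} semigroup $e^{-t\mL_0}$ on the weighted spaces, (ii) a short‑time Duhamel expansion around $\mL_0$ together with a bootstrap, and (iii) for $t$ bounded away from $0$, the exponential decay of Proposition~\ref{prop:linear}. The starting point is that, conjugating by $e^{a\cdot}$, the multiplier $e^{a\cdot}e^{-t\mL_0}e^{-a\cdot}$ has by \eqref{eq:pia} the symbol $\exp\!\big(it\{\xi^3+(c-3a^2)\xi\}-t\{a(c-a^2)+3a\xi^2\}\big)$, whose modulus is $e^{-\nu t}e^{-3at\xi^2}$ with $\nu:=a(c-a^2)>0$. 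Since the conjugate of $\pd_y^j$ has symbol $(i\xi-a)^j$, Plancherel's theorem together with $\|\widehat{e^{a\cdot}f}\|_{L^\infty}\le\|f\|_{L^1_a}$ gives, for $0<t\le1$ and $j=0,1$,
\[
\|e^{-t\mL_0}\pd_y^jf\|_{L^2_a}\lesssim e^{-\nu t}t^{-(2j+1)/4}\|f\|_{L^1_a},\qquad
\|e^{-t\mL_0}\pd_yf\|_{L^2_a}\lesssim e^{-\nu t}t^{-1/2}\|f\|_{L^2_a},
\]
using $\||\xi+ia|^je^{-3at\xi^2}\|_{L^2_\xi}\lesssim t^{-(2j+1)/4}$ and $\||\xi+ia|e^{-3at\xi^2}\|_{L^\infty_\xi}\lesssim t^{-1/2}$.

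Next I would reduce matters to $e^{-t\mL_c}\pd_y^jf$ (without $Q_c$). By \eqref{eq:xis} one has $e^{-t\mL_c}P_cf=\la f,\zeta_c^1\ra\xi_c^1+\la f,\zeta_c^2\ra(\xi_c^2-t\xi_c^1)$; moving the derivatives onto the rapidly decaying functions $\zeta_c^i$ (which, with their first derivatives, lie in $L^2_{-a}\cap L^\infty_{-a}$) gives $\|e^{-t\mL_c}P_c\pd_y^jf\|_{L^2_a}\lesssim(1+t)\|f\|_{L^1_a}$ and, in the second case, $\lesssim(1+t)\|f\|_{L^2_a}$, which is harmless. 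For $e^{-t\mL_c}\pd_y^jf$ on a short interval $0<t\le T_0$, write $\mL_c=\mL_0+\pd_y(f'(\varphi_c)\,\cdot\,)$ and, by density taking $f\in C_c^\infty$ so that $w(s):=e^{-s\mL_c}\pd_y^jf$ is continuous into $L^2_a$, use Duhamel,
\[
w(t)=e^{-t\mL_0}\pd_y^jf-\int_0^te^{-(t-s)\mL_0}\pd_y\big(f'(\varphi_c)w(s)\big)\,ds.
\]
With $\Phi(t):=\sup_{0<s\le t}s^{(2j+1)/4}\|w(s)\|_{L^2_a}$ (finite a priori), the free bounds above and $\|f'(\varphi_c)w(s)\|_{L^2_a}\le\|f'(\varphi_c)\|_{L^\infty}\|w(s)\|_{L^2_a}$ yield $\|w(t)\|_{L^2_a}\lesssim t^{-(2j+1)/4}\|f\|_{L^1_a}+\Phi(t)\int_0^t(t-s)^{-1/2}s^{-(2j+1)/4}\,ds$; since the beta integral equals $C_j\,t^{(1-2j)/4}$, multiplying by $t^{(2j+1)/4}$ gives $\Phi(t)\lesssim\|f\|_{L^1_a}+Ct^{1/2}\Phi(t)$, and choosing $T_0$ with $CT_0^{1/2}\le\tfrac12$ absorbs the last term. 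The $L^2_a$ estimate \eqref{eq:smoothing-b} goes the same way with weight $s^{1/2}$: here $\int_0^t(t-s)^{-1/2}s^{-1/2}\,ds=\pi$ is scale invariant, but after multiplying through by $t^{1/2}$ the residual is still $O(t^{1/2}\Phi(t))$ and is absorbed for $T_0$ small.

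It remains to insert the exponential weights. For $t\ge T_0$, write $e^{-t\mL_c}Q_c\pd_y^jf=e^{-(t-T_0/2)\mL_c}Q_c\big(e^{-(T_0/2)\mL_c}Q_c\pd_y^jf\big)$ and apply Proposition~\ref{prop:linear} to the first factor and, to the second, the bounds just proved at $t=T_0/2$ (together with the $P_c$ term), obtaining $\lesssim e^{-bt}\|f\|_{L^1_a}$, resp.\ $\lesssim e^{-bt}\|f\|_{L^2_a}$; for $0<t\le T_0$ the short‑time bounds (with $e^{-t\mL_c}Q_c=e^{-t\mL_c}-e^{-t\mL_c}P_c$) give the $t^{-(2j+1)/4}$, resp.\ $t^{-1/2}$, singularity, into which a factor $e^{-b't}\le$ const may be inserted. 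Choosing $0<b'<b$ and $K'$ large finishes \eqref{eq:smoothing-a} and \eqref{eq:smoothing-b}.

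The only slightly delicate point is the $L^2_a\to L^2_a$ estimate \eqref{eq:smoothing-b}: the Duhamel time‑convolution $\int_0^t(t-s)^{-1/2}s^{-1/2}\,ds$ does not decay as $t\to0$, so the contraction there must be supplied by the prefactor $t^{1/2}$ (after multiplying the inequality for $\|w(t)\|_{L^2_a}$ by $t^{1/2}$) rather than by the integral; everything else is the routine Kato symbol computation together with bookkeeping using the Duhamel and semigroup identities and the explicit form of $e^{-t\mL_c}$ on $\Range(P_c)$.
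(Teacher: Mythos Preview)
Your proof is correct and follows essentially the same route as the paper: both establish the free-semigroup smoothing on $L^2_a$ via the symbol computation \eqref{eq:pia}, run a Duhamel argument around $\mL_0$ to get the short-time singular bound, and then feed the result at a fixed positive time into Proposition~\ref{prop:linear} for the exponential decay at large $t$. The only cosmetic differences are that the paper applies $Q_c$ to the initial data from the outset and phrases the short-time step as a contraction mapping for the integral equation, whereas you keep the full semigroup, handle the $P_c$ part explicitly via the nilpotent action on $\spann\{\xi_c^1,\xi_c^2\}$, and close by a bootstrap on $\Phi(t)$; these are equivalent.
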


\begin{proof}
By the definition of the Fourier transform and Plancherel's theorem,
\begin{equation}
  \label{eq:plancherel}
 \|f\|_{L^2_a}=\|\hat{f}(\cdot+ia)\|_{L^2}\quad\text{for $f\in L^2_a$.} 
\end{equation}
By \eqref{eq:plancherel} and \eqref{eq:pia}, we have for $j\in \Z_{\ge0}$,
\begin{align*}
\|\pd_y^je^{-t\mL_0}f\|_{L^2_a}=& \left(\int_\R\left|
(\xi+ia)^je^{-itp(\xi+ia)}\hat{f}(\xi+ia)\right|^2\,d\xi\right)^{1/2}
\\ \le & e^{-a(c-a^2)t}
\left(\int_\R\left|
(\xi+ia)^je^{-3at\xi^2}\hat{f}(\xi+ia)\right|^2\,d\xi\right)^{1/2}\,.
\end{align*}
Since $\|\xi^ke^{-3at\xi^2}\|_{L^2}\lesssim  t^ {-(2k+1)/4}$ for $k\ge0$ 
and $\|\hat{f}(\cdot+ia)\|_{L^\infty}\lesssim \|f\|_{L^1_a}$,
\begin{equation}
\label{eq:smoothing0}
\|\pd_y^je^{-t\mL_0}f\|_{L^2_a}\lesssim e^{-a(c-a^2)t}(1+t^{-(2j+1)/4})
\|f\|_{L^1_a}\quad\text{for $j\in \Z_{\ge0}$.}
\end{equation}
Similarly, we have
\begin{equation}
\label{eq:smoothing1}
\|\pd_y^je^{-\mL_0}f\|_{L^2_a} \lesssim e^{-a(c-a^2)t}(1+t^{-j/2})\|f\|_{L^2_a}
\quad\text{for $j\in \Z_{\ge0}$}\,.
\end{equation}
\par
Now we will show \eqref{eq:smoothing-a} with $j=0$.
Let $v(t)$ be a mild solution of \eqref{eq:LKdV-1} with
$v(0)=Q_cf$ and $f\in L^1_a$, that is,
\begin{equation}
  \label{eq:2}
v(t)=e^{-t\mL_0}Q_cf
-\int_0^t e^{-(t-s)\mL_0}\pd_y\left(f'(\varphi_c)v(s)\right)\,ds
=:Tv(t)\,.  
\end{equation}
Since $Q_c\in B(L^1_a)$ and $f'(\varphi_c)$ is bounded,
it follows from \eqref{eq:smoothing0} and \eqref{eq:smoothing1} that
$$
\|Tv\|_{L^2_a} \lesssim 
e^{-a(c-a^2)t}t^{-1/4}\|f\|_{L^1_a}
+\int_0^t e^{-a(c-a^2)(t-s)}(t-s)^{-1/2}\|v(s)\|_{L^2_a}\,ds\,.
$$
By the contraction mapping theorem, there exists a unique solution of
\eqref{eq:2} satisfying
\begin{equation}
  \label{eq:3}
\sup_{0<t<t_1}t^{1/4}\|v(t)\|_{L^2_a}<\infty\quad\text{for a $t_1>0$.}
\end{equation}
For $t\ge t_1$, Proposition~\ref{prop:linear} and \eqref{eq:3} imply
\begin{equation}
  \label{eq:4}
\|e^{-t\mL_c}Q_cf\|_{L^2_a} \le
\|e^{-(t-t_1)\mL_c}Q_c\|_{B(L^2_a)}\|v(t_1)\|_{L^2_a}
\lesssim e^{-b(t-t_1)}\|f\|_{L^1_a}\,.
\end{equation}
Combining \eqref{eq:3} and \eqref{eq:4},
we obtain \eqref{eq:smoothing-a} with $j=0$.
We can prove \eqref{eq:smoothing-a} with $j=1$ and \eqref{eq:smoothing-b}
in exactly the same way.
Thus we complete the proof.
\end{proof}
\begin{rem}
Suppose $g(t)\in C([0,T];L^1_a)$ and that $v(t)$ is a solution of
\begin{equation}
\label{eq:rem-1}
 \pd_tv+\mL_cv=Q_c\pd_yf\,,\quad v(0)=0\,,
\end{equation}
in the class $C([0,T];L^2_a)$.
Then $v(t)$ can be represented as
\begin{equation}
  \label{eq:rem-2}
v(t)=\int_0^t e^{-(t-s)\mL_c}Q_cg(s)\,ds
\quad\text{for $t\in[0,T]$.}
\end{equation}
Indeed, Corollary~\ref{cor:s-smoothing} ensures that
the right hand side of \eqref{eq:rem-2} is a solution of
\eqref{eq:rem-1} in the class $C([0,T];L^2_a)$.
Note that a solution to \eqref{eq:rem-1} is unique in the class
$C([0,T];L^2_a)$.
Applying Corollary~\ref{cor:s-smoothing} to \eqref{eq:rem-2}
we have
\begin{equation}
  \label{eq:rem-3}
\|v(t)\|_{L^2_a}\le K'\int_0^t (t-s)^{-3/4}e^{-b'(t-s)}\|g(s)\|_{L^1_a}
\,ds\,.  
\end{equation}
In Section~\ref{sec:v2}, we will use \eqref{eq:rem-3} to estimate
quadratic nonlinearities.
\end{rem}

To deal with cubic terms of mKdV, we will use the following local smoothing
effect of
$$Ag(t):=\int_0^t e^{-(t-s)\mL_c}Q_cg(s)\,ds$$
 in the exponentially weighted space.
\begin{prp}
\label{prop:l-smoothing}
Let $p=2$ or $3$, $c>0$  and $0<a<\sqrt{c}$.
Then there exists a positive constant $K_1$ such that for every
$g\in L^2(\R_+;L^2_a)$,
\begin{equation}
  \label{eq:l-smooth2}
\|Ag\|_{L^2(\R_+;H^2_a)} \le K_1\|g\|_{L^2(\R_+;L^2_a)}\,.
\end{equation}
\end{prp}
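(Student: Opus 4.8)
The plan is to recognize $A$ as a convolution in the time variable and to reduce \eqref{eq:l-smooth2} to the uniform resolvent bound \eqref{eq:r-bound} by Plancherel's theorem in $t$. First I would extend $g$ by zero to $t<0$ and observe that $Ag=\Phi*g$, where $\Phi(t):=\mathbf{1}_{t\ge0}\,e^{-t\mL_c}Q_c$ is an operator-valued kernel. By Proposition~\ref{prop:linear}, $\|\Phi(t)\|_{B(L^2_a)}\le Ke^{-bt}\mathbf{1}_{t\ge0}$, so $\Phi\in L^1(\R;B(L^2_a))$ and Young's convolution inequality already gives $Ag\in L^2(\R;L^2_a)$ with $\|Ag\|_{L^2(\R;L^2_a)}\le(K/b)\|g\|_{L^2(\R_+;L^2_a)}$. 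Moreover, since $e^{-t\mL_c}Q_c$ on $\Range(Q_c)$ has negative exponential growth bound, its Laplace transform converges on the imaginary axis and equals the resolvent there: for every $\tau\in\R$,
\[
\widehat{\Phi}(\tau)=\int_0^\infty e^{-it\tau}e^{-t\mL_c}Q_c\,dt=(i\tau+\mL_c)^{-1}Q_c=R(\tau)Q_c\,.
\]

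Taking the Fourier transform in $t$ of $Ag=\Phi*g$ gives $\widehat{Ag}(\tau)=R(\tau)Q_c\,\widehat{g}(\tau)$ for a.e.\ $\tau\in\R$. Since $b>0$, the real line is contained in $\Sigma_b$, so \eqref{eq:r-bound} yields $C_*:=\sup_{\tau\in\R}\|R(\tau)Q_c\|_{B(L^2_a;H^2_a)}<\infty$; in particular $\widehat{Ag}(\tau)\in H^2_a$ with $\|\widehat{Ag}(\tau)\|_{H^2_a}\le C_*\|\widehat{g}(\tau)\|_{L^2_a}$ for a.e.\ $\tau$. Applying Plancherel's theorem in $t$ for functions valued in the Hilbert spaces $L^2_a$ and $H^2_a$ then gives
\[
\|Ag\|_{L^2(\R_+;H^2_a)}\le\|Ag\|_{L^2(\R;H^2_a)}=\|\widehat{Ag}\|_{L^2(\R;H^2_a)}\le C_*\|\widehat{g}\|_{L^2(\R;L^2_a)}=C_*\|g\|_{L^2(\R_+;L^2_a)}\,,
\]
which is \eqref{eq:l-smooth2} with $K_1=C_*$. (I normalize the Fourier transform so that Plancherel is an isometry; the explicit value of $K_1$ plays no role later.)

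I expect the only delicate points to be bookkeeping rather than hard analysis. One has to justify that the operator-valued integral defining $\widehat{\Phi}(\tau)$ converges in $B(L^2_a)$ — which it does by the exponential bound of Proposition~\ref{prop:linear} — and that its value is the resolvent $R(\tau)Q_c$, which is well-defined and uniformly bounded for every real $\tau$ by \eqref{eq:r-bound} (recall $\R\subset\Sigma_b$), the spectral point $0$ being absorbed by $Q_c$ as guaranteed by Proposition~\ref{spectral-stability}. It should also be noted that the genuine gain of two spatial derivatives, integrated in $L^2$ in time, is not produced by any cancellation in the above: it is already encoded in \eqref{eq:r-bound}, hence ultimately in the uniform bound $\|R_0(\lambda)\|_{B(L^2_a;H^2_a)}\lesssim1$ from \eqref{eq:resolvent}, which is the resolvent-side manifestation of the Gaussian smoothing of $e^{-t\mL_0}$ used in the proof of Corollary~\ref{cor:s-smoothing}. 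If one prefers to avoid the operator-valued/distributional manipulations, an alternative is to first invoke Corollary~\ref{cor:s-smoothing} to check that $Ag\in C(\R_+;H^2_a)$ and then run the Plancherel argument; the resolvent route above is the most economical.
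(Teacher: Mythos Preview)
Your proof is correct and follows essentially the same route as the paper: both arguments pass to the Fourier transform in $t$, invoke the uniform resolvent bound \eqref{eq:r-bound} on $\Sigma_b\supset\R$, and conclude by Plancherel for Hilbert-space valued functions. The only difference is bookkeeping --- the paper truncates $g$ to $[0,T]$, takes the Fourier transform in the lower half-plane $\Im\tau<0$ and sends $\varepsilon\to0$, $T\to\infty$, whereas you exploit Proposition~\ref{prop:linear} up front to see that the kernel $\Phi(t)=\mathbf{1}_{t\ge0}e^{-t\mL_c}Q_c$ lies in $L^1(\R;B(L^2_a))$, which lets you work directly on the real axis without the half-plane regularization; your version is slightly more streamlined but not a different idea.
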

Because of parabolic nature of $e^{-t\pd_x^3}$ on exponential weighted spaces,
Proposition~\ref{prop:l-smoothing} follows from the argument of \cite{dSimon}.
However, we here follow the lines of the proof of 
\cite[Proposition~2.7]{Burq-Tz}.
\begin{proof}[Proof of Proposition~\ref{prop:l-smoothing}]
  Fix $T>0$ and define $g_{T}(t)$ to be equal to $g(t)$ for
  $t\in[0,T]$ and $0$ elsewhere.  Define
$$
u_{T}(t)=\int_0^t e^{-(t-s)\mL_c}Q_cg_T(s)\,ds.
$$
Then $u_{T}(t)=Ag(t)$ for $t\in[0,T]$, $u(t)=0$ for $t\leq 0$ and $u_{T}(t)$ is a constant for $t>T$.
We have for $t\in\R$,
$$(\partial_t+\mL_c)u_{T}(t)=Q_c g_T(t)\,.$$
Thanks to the properties of $g_T$ and $u_T$, we can take the Fourier
transform in time (in the lower complex half plane) to get the relation
$$ (i\tau+\mL_c)\widehat{u_{T}}(\tau)
=Q_c \widehat{g_T}(\tau),\quad {\rm Im}(\tau)<0.$$
Take $\tau=\lambda-i\varepsilon$, $\varepsilon>0$ and $\lambda\in\R$. 
Using the resolvent estimate \eqref{eq:r-bound}, letting $\varepsilon$ to zero and integrating over $\lambda$, we get
$$
\|\widehat{u_{T}}(\lambda)\|_{L^2(\R;H^2_a)} \lesssim\|
 \widehat{g_T}(\lambda)\|_{L^2(\R;L^2_a)}\,.
$$
Since the Fourier transform of a function from $\R$ to a Hilbert space $H$
is an isometry on $L^2(\R;H)$, we obtain
$$\|u_{T}\|_{L^2(\R;H^2_a)} \lesssim\|g_T\|_{L^2(\R;L^2_a)}\,.$$
This is turn implies 
$$
\|Ag\|_{L^2([0,T];H^2_a)} \lesssim \|g\|_{L^2(\R;L^2_a)}\,.
$$
Observe that the implicit constant in the last inequality
is independent of $T$. Letting $T$ tends to infinity, we complete the proof of
Proposition~\ref{prop:l-smoothing}.
\end{proof}

\section{Decomposition of solutions around $1$-solitons}
\label{sec:decomp}
In this section, we will decompose a solution around $1$-solitons
into a sum of a modulating solitary wave and the remainder part. Let 
\begin{equation}
  \label{eq:decomp1}
u(t,x)=\varphi_{c(t)}(y)+v(t,y)\,,\quad y=x-x(t)\,.
\end{equation}
Here $c(t)$ and $x(t)$ denote the modulating speed and the modulating phase 
shift of the main solitary wave at time $t$ and $v(t,y)$ denotes
the remainder part of the solution.
Substituting \eqref{eq:decomp1} into \eqref{eq:gKdV},
we obtain
\begin{equation}
  \label{eq:v}
\pd_tv+\mL_{c(t)}v-(\dot{x}(t)-c(t))\pd_yv+\ell(t)+\pd_y\mathcal{N}=0\,,
\end{equation}
where
\begin{gather*}
\mathcal{N}=f(\varphi_{c(t)}+v)-f(\varphi_{c(t)})-f'(\varphi_{c(t)})v\,,\\
\ell(t)=\dot{c}(t)\pd_c\varphi_{c(t)}(y)
-(\dot{x}(t)-c(t))\pd_y\varphi_{c(t)}(y)\,.
\end{gather*}
Suppose that $u(t,x)$ satisfies the initial condition
$$u(0,x)=\varphi_{c_0}(x)+v_0(x)\,.$$
To apply the semigroup estimate directly to $v$ as \cite{PW},
the perturbation $v(t)$ should belong to an exponentially weighted space.
In order to extend Pego-Weinstein's approach for  $v_0\in L^2(\R)$ 
or $v_0\in H^1(\R)$, 
we further decompose $v(t,y)$ into a sum of a small $L^2$-solution of
the KdV equation and an exponentially localized part.
More precisely, let $\tv_1$ be a solution of \eqref{eq:gKdV} satisfying
$\tv_1(0,\cdot)=v_0$ and let
$$v_1(t,y)=\tv_1(t,x)\,,\quad v(t,y)=v_1(t,y)+v_2(t,y)\,.$$
Then
\begin{equation}
  \label{eq:v1}
  \left\{
    \begin{aligned}
& \pd_tv_1-\dot{x}(t)\pd_yv_1+\pd_y^3v_1+\pd_yf(v_1)=0\,,\\
& v_1(0,y)=v_0(y+x(0))\,,
    \end{aligned}\right.
\end{equation}
and
\begin{equation}
  \label{eq:v2}\left\{
  \begin{aligned}
& \pd_tv_2+\mL_{c(t)}v_2-\left(\dot{x}(t)-c(t)\right)\pd_yv_2+\ell(t)+\pd_yN(t)=0\,,\\
& v_2(0,x)=0\,,
  \end{aligned}\right.
\end{equation}
where $N(t)=N_1(t)+N_2(t)$, 
\begin{gather*}
N_1(t)=f(\varphi_{c(t)}+v_1)-f(\varphi_{c(t)})-f(v_1)\,,\\
N_2(t)=f(\varphi_{c(t)}+v)-f(\varphi_{c(t)}+v_1)-f'(\varphi_{c(t)})v_2\,.
\end{gather*}
The solutions of \eqref{eq:v1} will be evaluated by using a virial
estimate first used in the fundamental article by Kato (\cite{K}). The
solutions of \eqref{eq:v2} will be estimated by using the linear estimates
due to Pego-Weinstein in Section~\ref{sec:linear}.
\par
To begin with, we will show that $v_2(t)$ remains in exponentially
weighted spaces as long as the decomposition \eqref{eq:decomp1} exists
and $c(t)-c_0$ remains small.
\begin{lmm}
  \label{lem:difu-v1}
Let $p=2$ and $v_0\in L^2(\R)$ or $p=3$ and $v_0\in H^1(\R)$.
Suppose that $u(t)$ is a solution to \eqref{eq:gKdV} satisfying
$u(0)=\varphi_{c_0}+v_0$ and that and $\tv_1(t)$ is a solution to \eqref{eq:gKdV}
satisfying $\tv_1(0)=v_0$.
Then for $u(t)-\tv_1(t)\in C([0,\infty);L^2_a)$ for any $a\in [0,\sqrt{c_0})$.
\end{lmm}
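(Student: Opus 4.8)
The plan is to regard $w:=u-\tv_1$ as the solution of a \emph{linear} Cauchy problem with a time-dependent coefficient that is globally controlled, to build a solution of this problem in the weighted space $L^2_a$ by a fixed-point argument, and then to identify it with $u-\tv_1$ by a uniqueness argument. The case $a=0$ is immediate, since $L^2_0=L^2$ and $u-\tv_1\in C([0,\infty);L^2)$, so fix $a\in(0,\sqrt{c_0})$. Writing $u=\tv_1+w$ and factoring $u^p-\tv_1^p=w\sum_{j=0}^{p-1}u^j\tv_1^{p-1-j}=:g(t,x)\,w$, equation \eqref{eq:gKdV} becomes
\[
\pd_tw+\pd_x^3w+3\pd_x(g\,w)=0\,,\qquad w(0,x)=\varphi_{c_0}(x)\,,
\]
and, since $g$ depends only on the \emph{given} solutions $u$ and $\tv_1$, this is linear in $w$. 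Two facts go into the argument. First, by conservation of the $L^2$-norm along \eqref{eq:gKdV} (and, for $p=3$, of the Hamiltonian together with the Gagliardo--Nirenberg inequality, which bounds $\|u\|_{H^1}$ and $\|\tv_1\|_{H^1}$ uniformly in $t$) one has $M:=\sup_{t\ge0}\|g(t)\|_{L^2(\R)}<\infty$ --- for $p=2$ because $g=u+\tv_1$, for $p=3$ because $H^1(\R)\hookrightarrow L^4(\R)$. Second, by \eqref{eq:soliton} the datum satisfies $\varphi_{c_0}(x)=O(e^{-\sqrt{c_0}|x|})$ as $|x|\to\infty$ for both $p=2$ and $p=3$, so $\varphi_{c_0}\in L^2_a$ exactly for $a\in[0,\sqrt{c_0})$, which is why this is the admissible range.

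Next I would construct the solution in $C([0,\tau];L^2_a)$. As in the proof of Corollary~\ref{cor:s-smoothing} (with $\mL_c$ replaced by $\pd_x^3$, dropping both $Q_c$ and the drift), the free Airy evolution on $L^2_a$ obeys $\|e^{-t\pd_x^3}f\|_{L^2_a}\le e^{a^3t}\|f\|_{L^2_a}$ and the parabolic smoothing estimate $\|e^{-t\pd_x^3}\pd_xf\|_{L^2_a}\lesssim e^{a^3t}(1+t^{-3/4})\|f\|_{L^1_a}$. Since $\|g(s)w(s)\|_{L^1_a}=\|g(s)\,e^{a\cdot}w(s)\|_{L^1}\le M\|w(s)\|_{L^2_a}$, the Duhamel map
\[
\Phi w(t)=e^{-t\pd_x^3}\varphi_{c_0}-3\int_0^te^{-(t-s)\pd_x^3}\pd_x\big(g(s)w(s)\big)\,ds
\]
maps $C([0,\tau];L^2_a)$ to itself, with $\|\Phi 0\|_{C([0,\tau];L^2_a)}\le e^{a^3\tau}\|\varphi_{c_0}\|_{L^2_a}$ and $\|\Phi w_1-\Phi w_2\|_{C([0,\tau];L^2_a)}\le C_aMe^{a^3\tau}\tau^{1/4}\|w_1-w_2\|_{C([0,\tau];L^2_a)}$ for $\tau\le1$, the integrable singularity $(t-s)^{-3/4}$ producing the gain $\tau^{1/4}$. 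Hence $\Phi$ is a contraction for some $\tau=\tau(a,M)>0$, and this $\tau$ is \emph{independent of the initial time} because the equation is linear in $w$; its unique fixed point $w_*$ lies in $C([0,\tau];L^2_a)$. To identify $w_*$ with $u-\tv_1$ on $[0,\tau]$ I would invoke uniqueness of $C([0,\tau];L^2(\R))$ solutions of the linearized equation (which follows from the $L^2$-well-posedness theory, \cite{Bourgain} for $p=2$ and \cite{KPV1,KPV2} for $p=3$): both $w_*$ and $u-\tv_1$ solve it with datum $\varphi_{c_0}$, the latter because $u$ and $\tv_1$ satisfy the Duhamel formulation of \eqref{eq:gKdV} in $C([0,\tau];L^2)$. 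Finally, running the same step on $[\tau,2\tau]$ with datum $w_*(\tau)\in L^2_a$, and so on with the uniform step length $\tau$, gives $u-\tv_1\in C([0,\infty);L^2_a)$.

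I expect the only delicate point to be the case $p=2$: there the coefficient $g=u+\tv_1$ belongs merely to $C([0,\infty);L^2(\R))$ and not to an $L^\infty$-based space, so a weighted energy / Kato-virial estimate or a crude Duhamel bound does not close using $\|w\|_{L^2_a}$ alone. This is precisely what the $L^1_a\!\to\!L^2_a$ parabolic smoothing of the Airy semigroup on the exponentially weighted space circumvents: pairing $\|g\|_{L^2}$ with $\|w\|_{L^2_a}$ lands $g\,w$ in $L^1_a$, and the semigroup then absorbs the extra $\pd_x$ at the integrable cost $(t-s)^{-3/4}$. The genuine $L^2$-well-posedness of \cite{Bourgain} enters only through the uniqueness used to identify the constructed solution with $u-\tv_1$. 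For $p=3$ the coefficient lies in $C([0,\infty);L^\infty(\R))$ by Sobolev embedding, so the identification is immediate from the unweighted energy method and the argument is routine.
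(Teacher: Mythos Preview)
Your contraction argument in $C([0,\tau];L^2_a)$ is sound, but the identification step has a real gap. You assert that ``both $w_*$ and $u-\tv_1$'' are $C([0,\tau];L^2)$ solutions of the linear equation, so that $L^2$-uniqueness identifies them. However, $L^2_a$ does \emph{not} embed in $L^2$ for $a>0$: membership in $L^2_a$ means only $e^{ax}w\in L^2$, which allows $w$ to grow like $e^{a|x|}$ as $x\to-\infty$. Nothing in your construction forces $w_*\in L^2$, so the $L^2$-uniqueness you invoke is inapplicable. Nor can you argue the other way around, via uniqueness of the fixed point in $L^2_a$: that would require knowing $u-\tv_1\in C([0,\tau];L^2_a)$ a~priori, which is precisely the conclusion you are after. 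There is also no cheap repair by running the contraction simultaneously in $L^2_{-a}$ or in unweighted $L^2$, since the parabolic smoothing of $e^{-t\pd_x^3}$ is one-sided and disappears when $a\le0$.

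The paper closes this gap by approximation rather than by a direct identification. For data $v_0\in H^3\cap L^2_a$, Kato's theory gives $\bar u:=u-\tv_1\in C([0,\infty);H^3\cap L^2_a)$, and a weighted energy computation (multiply the equation for $\bar u$ by $2e^{2ax}\bar u$; the $\pd_x^3$ term contributes the favorable piece $6a\|\pd_x\bar u\|_{L^2_a}^2$, which absorbs the terms involving the merely $L^2$ coefficient $g$ via a weighted Sobolev inequality) yields $\|\bar u(t)\|_{L^2_a}\le e^{Ct/2}\|\varphi_{c_0}\|_{L^2_a}$ with $C=C(c_0,\|v_0\|_{L^2})$. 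For general $v_0\in L^2$ one approximates by such smooth $v_{0,n}$: the corresponding $\bar u_n$ converge to $\bar u$ in $C_tL^2$ by well-posedness and are uniformly bounded in $L^2_a$, so a weak-$L^2_a$ subsequential limit must coincide with $\bar u$, placing $\bar u(t)\in L^2_a$. Only after this bound is established does the paper invoke the Duhamel formula you wrote, and then merely to upgrade the pointwise-in-$t$ bound to continuity in $L^2_a$. Your scheme can be patched along the same lines (identify $w_{*,n}=\bar u_n$ for smooth data, then pass to the limit), but at that point it is no longer more direct than the paper's route.
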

\begin{proof}
Suppose $p=2$ and $\bar{u}(t,x)=u(t,x)-\tv_1(t,x)$. Then
\begin{equation}
  \label{eq:baru}
\left\{\begin{aligned}
&\pd_t\bar{u}+\pd_x^3\bar{u}+3\pd_x\left\{(u+\tv_1)\bar{u}\right\}=0
\quad\text{for $t>0$ and $x\in\R$,}\\
& \bar{u}(0,x)=\varphi_{c_0}(x)\quad \text{for $x\in\R$.}
  \end{aligned}\right.
\end{equation}
Thanks to the well-posedness of the KdV equation in $L^2(\R)$
%(e.g. \cite{I-team})
, we have $\bar{u}(t)=u(t)-\tv_1(t)\in  C(\R;L^2(\R))$.
\par
Next we will show that $\bar{u}(t)\in L^\infty(0,T;L^2_a)$
for any $a\in(0,\sqrt{c_0})$ and $T>0$.
Suppose in addition that $v_0\in H^3(\R)\cap L^2_a$
so that $\bar{u}\in C(\R;H^3(\R))$ and $\tv_1(t)$, $u(t)\in
C([0,\infty);L^2_a)$.
At least formally, we have 
\begin{align*}
&\frac{d}{dt}\int_\R e^{2ax}\bar{u}^2(t,x)\,dx
+6a\int_\R e^{2ax}(\pd_x\bar{u})^2(t,x)\,dx
\\=&\int e^{2ax}\{(2a)^3\bar{u}^2+8a\bar{u}^3\}(t,x)\,dx+I\,,
\end{align*}
where
$I=12\int_\R e^{2ax}(2a\bar{u}^2+\bar{u}\pd_x\bar{u})\tv_1\,dx$.
Since $\|\tv_1(t)\|_{L^2}=\|v_0\|_{L^2}$ and
$$\|\bar{u}(t)\|_{L^2}\le \|u(t)\|_{L^2}+\|\tv_1(t)\|_{L^2}\le
\|\varphi_{c_0}\|_{L^2}+2\|v_0\|_{L^2}
\quad\text{for every $t\in\R$,}$$ 
there exists a $C>0$ depending only
on $c_0$ and $\|v_0\|_{L^2}$ such that 
$$\frac{d}{dt}\|\bar{u}(t)\|_{L^2_a}^2\le 
C\|\bar{u}(t)\|_{L^2_a}^2\,.$$
Here we use a weighted Sobolev inequality 
$\|w\|_{L^q_a}\lesssim \|w\|_{L^2_a}^{1/2+1/q}\|w\|_{H^1_a}^{1/2-1/q}$
for $2\le q \le \infty$ (see \eqref{eq:w2} in Section~\ref{sec:ap1}).
Thus we have
\begin{equation}
  \label{eq:baru-b}
\|\bar{u}(t)\|_{L^2_a}^2\le 
e^{Ct} \|\bar{u}(0)\|_{L^2_a}^2
=e^{Ct}\|\varphi_{c_0}\|_{L^2_a}^2\quad\text{for $t\ge0$.}  
\end{equation}
More precisely, let $\tilde{\chi}_n(x)=e^{2an}(1+\tanh a(x-n))/2$.
We have $\tilde{\chi}_n(x)\uparrow e^{2ax}$ and $\tilde{\chi}_n'(x)\uparrow 2ae^{2ax}$
as $n\to\infty$ and $0<\tilde{\chi}_n'(x)\le a\tilde{\chi}_n(x)$ and
$|\tilde{\chi}_n'''(x)|\le 4a^2\tilde{\chi}_n'(x)$ for any $x\in\R$.
Using the above properties of $\tilde{\chi}_n$ and Lemma~\ref{lem:weight},
we can easily justify \eqref{eq:baru-b} for $v_0\in H^3(\R)\cap L^2_a$. 
\par
For any $v_0\in L^2(\R)$, there exists a sequence $v_{0n}\in
H^3(\R)\cap L^2_a$ such that $v_{0n}\to v_0$ in $L^2(\R)$
as $n\to\infty$. Let $u_n(t)$ and $\tv_n(t)$ be a solution to \eqref{eq:gKdV}
satisfying $u_n(0)=\varphi_{c_0}+v_{0n}$ and $\tv_n(0)=v_{0n}$.
Then for any $t\in\R$, we have
$\lim_{n\to\infty}\|u_n(t)-\tv_n(t)-\bar{u}(t)\|_{L^2(\R)}=0$ and 
there exists a subsequence of $\{u_n(t)-\tv_n(t)\}$ that 
converges to $\bar{u}(t)$ weakly in $L^2_a$.
Thus we have
$$\|\bar{u}(t)\|_{L^2_a}
\le \liminf_{n\to\infty}\|u_n(t)-\tv_n(t)\|_{L^2_a}
\le e^{Ct/2}\|\varphi_{c_0}\|_{L^2_a}\,.$$
\par
By the variation of the constant formula, 
$$\bar{u}(t)=e^{-t\pd_x^3}\varphi_{c_0}
-3\int_0^te^{-(t-s)\pd_x^3}\pd_x(u(s)+\tv_1(s))\bar{u}(s)\,ds\,.$$
Since $e^{-t\pd_x^3}$ is a $C_0$-semigroup on $L^2_a$
and $\|\pd_xe^{-t\pd_x^3}\|_{B(L^1_a;L^2_a)}\lesssim t^{-3/4}$,
we easily see that $\bar{u}(t)\in C([0,\infty);L^2_a)$.
\par
The case $p=3$ can be shown in the same way.
Thus we complete the proof.
\end{proof}
Now we impose the symplectic orthogonality condition on $v_2$.
\begin{gather}
  \label{eq:orth1}
\int_\R v_2(t,y)\zeta_{c(t)}^1(y)\,dy=0\,,\\
  \label{eq:orth2}
\int_\R v_2(t,y)\zeta_{c(t)}^2(y)\,dy=0\,.
\end{gather}
Note that $\zeta_{c(t)}^1$, $\zeta_{c(t)}^2\in L^2_{-a}$ 
and $v_2(t)\in L^2_a$ for $a\in(0,\sqrt{c_0}/2)$ by Lemma~\ref{lem:difu-v1}
as long as $|c(t)-c_0|$ remains small.
In an $L^2_a$-neighborhood of $\varphi_{c_0}$, the speed and the phase satisfying
the orthogonality conditions can be uniquely chosen.

\begin{lmm}
\label{lem:imp}
Let $c_0>0$, $a\in(0,\sqrt{c_0})$.
Then there exist positive constants
$\delta_0$ and $\delta_1$ such that for each 
$w\in U_0(\delta_0):=\{w\in L^2_a\mid\|w-\varphi_{c_0}\|_{L^2_a}<\delta_0\}$,
there exists a unique $(\gamma\,,\,c)\in U_1(\delta_1):=
\{(\gamma\,,\,c)\in\R^2 \mid |\gamma|+|c-c_0|<\delta_1\}$ such that
\begin{equation}
  \label{eq:imp}
\la w(\cdot+\gamma)-\varphi_c\,,\,\zeta_c^1\ra=
\la w(\cdot+\gamma)-\varphi_c\,,\,\zeta_c^2\ra=0\,.  
\end{equation}
\end{lmm}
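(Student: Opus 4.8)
The plan is to realize this as an application of the implicit function theorem with $w$ playing the role of a parameter living in the Banach space $L^2_a$. Define $F\colon L^2_a\times\R^2\to\R^2$ by
$$
F(w,\gamma,c)=\bigl(\la w(\cdot+\gamma)-\varphi_c,\zeta_c^1\ra,\ \la w(\cdot+\gamma)-\varphi_c,\zeta_c^2\ra\bigr).
$$
The pairings are well defined because $w(\cdot+\gamma)\in L^2_a$ while $\zeta_c^1,\zeta_c^2\in L^2_{-a}$ for $a\in(0,\sqrt{c})$ and $c$ near $c_0$, so each component is bounded by $\|w(\cdot+\gamma)\|_{L^2_a}\|\zeta_c^i\|_{L^2_{-a}}$ via Cauchy--Schwarz; note also $F(\varphi_{c_0},0,c_0)=0$. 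The lemma will follow once I check that $F$ is jointly continuous, that $D_{(\gamma,c)}F$ exists and is continuous on a neighborhood of $(\varphi_{c_0},0,c_0)$, and that $D_{(\gamma,c)}F(\varphi_{c_0},0,c_0)$ is invertible: the parametrized implicit function theorem (uniform contraction principle) then yields $\delta_0,\delta_1>0$ and, for each $w\in U_0(\delta_0)$, a unique $(\gamma(w),c(w))\in U_1(\delta_1)$ with $F(w,\gamma(w),c(w))=0$, depending continuously on $w$.

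The one delicate point in verifying the $C^1$ hypothesis is the translation, since $\gamma\mapsto w(\cdot+\gamma)$ is only continuous, not differentiable, on a generic element of $L^2_a$. I would bypass this by transferring the translation onto the smooth profiles: $\la w(\cdot+\gamma),\zeta_c^i\ra=\la w,\zeta_c^i(\cdot-\gamma)\ra$, and then note that $(\gamma,c)\mapsto\zeta_c^i(\cdot-\gamma)\in L^2_{-a}$ is $C^1$ (in fact $C^\infty$). Indeed, by \eqref{eq:defzetas} the functions $\zeta_c^i$ and all the $y$- and $c$-derivatives that appear are exponentially localized, except for a bounded term in $\zeta_c^1$ coming from $\int_{-\infty}^{y}\pd_c\varphi_c$, which tends to a constant as $y\to+\infty$ and is exponentially small as $y\to-\infty$; in all cases the weight $e^{-ax}$ with $a<\sqrt{c}$ puts them in $L^2_{-a}$, using the exponential decay of $\varphi_c$ and $\pd_c\varphi_c$ and the smoothness of $\theta_1(c),\theta_2(c)$ near $c_0$ (the latter being smooth since $d\|\varphi_c\|_{L^2}^2/dc\ne0$ for $p=2$). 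The term $\la\varphi_c,\zeta_c^i\ra$ is independent of $\gamma$ and smooth in $c$ for the same reasons, and since the $w$-dependence enters only through the bounded linear map $w\mapsto\la w,\zeta_c^i(\cdot-\gamma)\ra$, joint continuity and continuity of the partial derivatives are immediate.

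It then remains to compute the Jacobian at the base point. Differentiating in $\gamma$ with the translation moved onto $\zeta_c^i$ and integrating by parts,
$$
\pd_\gamma F_i\big|_{(\varphi_{c_0},0,c_0)}=-\la\varphi_{c_0},\pd_y\zeta_{c_0}^i\ra=\la\pd_y\varphi_{c_0},\zeta_{c_0}^i\ra=\la\xi_{c_0}^1,\zeta_{c_0}^i\ra=\delta_{1i},
$$
the boundary terms vanishing by the decay of $\varphi_{c_0}$; differentiating in $c$ and using that $w(\cdot+\gamma)-\varphi_c$ vanishes at the base point (so the term with $\pd_c\zeta_c^i$ drops out),
$$
\pd_c F_i\big|_{(\varphi_{c_0},0,c_0)}=-\la\pd_c\varphi_{c_0},\zeta_{c_0}^i\ra=-\la\xi_{c_0}^2,\zeta_{c_0}^i\ra=-\delta_{2i},
$$
where I invoked the biorthogonality $\la\xi_c^i,\zeta_c^j\ra=\delta_{ij}$ recorded in Section~\ref{sec:linear}. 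Hence $D_{(\gamma,c)}F(\varphi_{c_0},0,c_0)=\diag(1,-1)$, which is invertible, and the implicit function theorem applies. The main (and rather modest) obstacle is thus purely the bookkeeping needed to justify the $C^1$-dependence on $(\gamma,c)$ in the presence of the translation and the $c$-dependence of $\zeta_c^i$; the non-degeneracy is handed to us for free by the biorthogonality of $\{\xi_c^i\}$ and $\{\zeta_c^j\}$.
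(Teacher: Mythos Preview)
Your proof is correct and essentially identical to the paper's: both apply the implicit function theorem to the map $(w,\gamma,c)\mapsto(\la w-\varphi_c(\cdot-\gamma),\zeta_c^i(\cdot-\gamma)\ra)_{i=1,2}$ (your $F$ rewritten via the translation transfer you describe), verify the base point vanishes, and compute the Jacobian $\diag(1,-1)$ from the biorthogonality relations. Your write-up is in fact more careful than the paper's in spelling out why the $C^1$ hypothesis holds despite the translation acting on a non-smooth $w$.
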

\begin{proof}
Let $G: L^2_a\times \R\times\R_+\to \R^2$ be a mapping defined by
\begin{equation}
  \label{eq:map-G}
G(w,\gamma,c)=
\begin{pmatrix}
\la w-\varphi_c(\cdot-\gamma)\,,\,\zeta_c^1(\cdot-\gamma)\ra
\\ \la w-\varphi_c(\cdot-\gamma)\,,\,\zeta_c^2(\cdot-\gamma)\ra
\end{pmatrix}\,.  
\end{equation}
Since
$G(\varphi_{c_0},0,c_0)={}^t(0,0)$ and $\nabla_{(\gamma,c)}G(\varphi_{c_0},0,c_0)=
\begin{pmatrix}1 & 0 \\ 0 & -1\end{pmatrix}$ is invertible,
Lemma~\ref{lem:imp} follows immediately from the implicit function theorem.
\end{proof}
Lemma~\ref{lem:imp} implies that the decomposition
\begin{equation}
  \label{eq:decomp2}
u(t,x)-\tv_1(t,x)=\varphi_{c(t)}(y)+v_2(t,y)\,,\quad y=x-x(t)
\end{equation}
satisfying the orthogonality conditions \eqref{eq:orth1} and \eqref{eq:orth2}
persists as long as $\varphi_{c(t)}(y)+v_2(t,y)$ stays in
$U_0(\delta_0)$ and $c(t)-c_0$ remains small.
\begin{lmm}
\label{lem:decomp}
Let $c_0$, $\delta_0$ and $\delta_1$ be as in Lemma~\ref{lem:imp}.
Suppose that $u$ and $\tv_1$ be solutions of \eqref{eq:gKdV} satisfying
$u(0)=\varphi_{c_0}+v_0$ and $\tv_1(0)=v_0\in L^2(\R)$.
Then there exist $T>0$ and $c(t)$, $x(t)\in C([0,T])\cap C^1((0,T))$
such that
$$c(0)=c_0\,,\quad x(0)=0\,,\quad \sup_{t\in[0,T]}|c(t)-c_0|<\delta_1\,,$$
and that $v_2$ defined by \eqref{eq:decomp2} satisfies
the orthogonality conditions \eqref{eq:orth1} and \eqref{eq:orth2}
for $t\in[0,T]$. Moreover, if $T$ is finite and
$$\sup_{t\in[0,T)}\|\varphi_{c(t)}+v_2(t)-\varphi_{c_0}\|_{L^2_a}<\delta_0\,,$$
then $T$ is not maximal.
\end{lmm}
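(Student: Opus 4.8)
The plan is a modulation argument of Lyapunov--Schmidt type, in which Lemma~\ref{lem:difu-v1} plays the role that lets one work with $v_0$ merely in $L^2(\R)$ (resp.\ $H^1(\R)$): it is $\bar u:=u-\tv_1$, not $v$ itself, that lives in the weighted space. Writing $\bar u(t,y+x(t))=\varphi_{c(t)}(y)+v_2(t,y)$, the conditions \eqref{eq:orth1}--\eqref{eq:orth2} read $\la \bar u(t,\cdot+x(t))-\varphi_{c(t)},\zeta^i_{c(t)}\ra=0$ ($i=1,2$), which is exactly the relation solved by Lemma~\ref{lem:imp} applied to $w=\bar u(t)$. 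By Lemma~\ref{lem:difu-v1}, $\bar u\in C([0,\infty);L^2_a)$ and $\bar u(0)=\varphi_{c_0}\in U_0(\delta_0)$, so by continuity into $L^2_a$ there is a short interval on which $\bar u(t)\in U_0(\delta_0)$; there Lemma~\ref{lem:imp} produces $(x(t),c(t))\in U_1(\delta_1)$ with $(x(0),c(0))=(0,c_0)$ by uniqueness, and continuity of $(x,c)$ in $t$ follows because these parameters depend continuously on $w=\bar u(t)$ through the implicit function theorem. Since $c$ is continuous and $|c-c_0|<\delta_1$ pointwise on a compact interval, one may arrange $\sup|c-c_0|<\delta_1$; substituting \eqref{eq:decomp2} into \eqref{eq:gKdV} shows that $v_2$ solves \eqref{eq:v2}. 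Let $T$ be the supremum of times for which all of this holds.

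For $C^1$ regularity on $(0,T)$, differentiate \eqref{eq:orth1}--\eqref{eq:orth2} in $t$ and insert \eqref{eq:v2}. By the adjoint relations \eqref{eq:etas} and the orthogonality of $v_2$ to $\zeta^1_{c(t)},\zeta^2_{c(t)}$ the terms $\la\mL_{c(t)}v_2,\zeta^i_{c(t)}\ra$ drop out, and using $\la\xi^i_c,\zeta^j_c\ra=\delta_{ij}$ one is left (up to signs) with the $2\times2$ system
\[
\bigl(\diag(1,-1)+B(t)\bigr)\begin{pmatrix}\dot x(t)-c(t)\\ \dot c(t)\end{pmatrix}
=\begin{pmatrix}\la N(t),\pd_y\zeta^1_{c(t)}\ra\\ \la N(t),\pd_y\zeta^2_{c(t)}\ra\end{pmatrix},
\]
where $B(t)$ is built from pairings of $v_2(t)$ against $\pd_y\zeta^i_{c(t)}$ and $\pd_c\zeta^i_{c(t)}$ and obeys $\|B(t)\|\lesssim\|v_2(t)\|_{L^2_a}$, hence is a small perturbation after shrinking the interval. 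The right-hand side is continuous on $(0,T)$: for $p=2$, $N_1(t)=6\varphi_{c(t)}v_1(t)$ and $N_2(t)=3v_2(t)(v_2(t)+2v_1(t))$ lie in $C((0,T);L^1_a)$ because $\varphi_c$ is exponentially localized, $v_1\in C([0,\infty);L^2)$ and $v_2\in C((0,T);L^2_a)$ (the case $p=3$ is analogous, using $v_0\in H^1$ so that $v_2\in L^\infty$), and $\pd_y\zeta^i_c\in L^2_{-a}$ for $a\in(0,\sqrt{c_0}/2)$. Inverting the matrix gives $\dot x-c,\dot c\in C((0,T))$, so $x,c\in C^1((0,T))$; the differentiation is justified by reading \eqref{eq:v2} as an identity in $C((0,T);H^{-3}_a)$ and pairing against the smooth, weighted-$L^2$ functions $\zeta^i_c$ and their $y$- and $c$-derivatives.

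It remains to prove the continuation criterion. Assume $T<\infty$ and $M:=\sup_{t\in[0,T)}\|\varphi_{c(t)}+v_2(t)-\varphi_{c_0}\|_{L^2_a}<\delta_0$. From the linear system $|\dot x(t)-c(t)|\lesssim\|v_2(t)\|_{L^2_a}+\|N(t)\|_{L^1_a}$ is bounded on $[0,T)$, so $x$ is uniformly Lipschitz and $x(T):=\lim_{t\to T^-}x(t)$ exists, and likewise $c(T):=\lim_{t\to T^-}c(t)$. By Lemma~\ref{lem:difu-v1}, $\bar u(T)\in L^2_a$, and in the limit $w:=\bar u(T,\cdot+x(T))-\varphi_{c(T)}=\lim_{t\to T^-}v_2(t)$ in $L^2_a$, with $\la w,\zeta^i_{c(T)}\ra=0$ and $\|\varphi_{c(T)}+w-\varphi_{c_0}\|_{L^2_a}\le M<\delta_0$; thus $\varphi_{c(T)}+w\in U_0(\delta_0)$, and since $(0,c(T))$ solves the relation of Lemma~\ref{lem:imp} for this element, uniqueness forces $(0,c(T))\in U_1(\delta_1)$, i.e.\ $|c(T)-c_0|<\delta_1$. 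Finally $\bar u(t,\cdot+x(T))\to\varphi_{c(T)}+w$ in $L^2_a$ as $t\to T$, so $\bar u(t,\cdot+x(T))\in U_0(\delta_0)$ for $t$ near $T$; applying Lemma~\ref{lem:imp} to $\bar u(t,\cdot+x(T))$ and shifting the resulting phase back by $x(T)$ extends the decomposition — compatibly, by uniqueness at $t=T$ — to $[0,T+\eps]$ with $\sup_{[0,T+\eps]}|c-c_0|<\delta_1$ after shrinking $\eps$, contradicting the maximality of $T$.

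The main obstacle is the $C^1$ step: making rigorous the term-by-term differentiation of the orthogonality conditions when $v_2$ is only continuous (not differentiable) in $t$ with values in $L^2_a$, and controlling $N$ in the weighted space $L^1_a$. A secondary nuisance is that $L^2_a$ is not translation-invariant, so in the continuation step one must keep track of the three natural centerings — around $x(t)$, around a fixed point, and the weight defining $U_0(\delta_0)$ — noting that the shift by the finite quantity $x(T)$ costs only the bounded factor $e^{-ax(T)}$.
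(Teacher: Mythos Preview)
Your existence, continuity, and continuation arguments are sound and follow the same implicit-function-theorem strategy as the paper. The $C^1$ step, however, is circular as written: equation \eqref{eq:v2} already contains $\dot x(t)$ and $\dot c(t)$ --- inside $\ell(t)$ and in the convective term $(\dot x - c)\pd_y v_2$ --- so you cannot invoke it, even read as an identity in $H^{-3}_a$, to establish that $\dot x$ and $\dot c$ exist in the first place. The $2\times 2$ system you write down is exactly \eqref{eq:modulation}, which the paper derives in Section~\ref{sec:modulation} \emph{after} $C^1$ regularity is known; it is a consequence, not a proof. You correctly flag this as ``the main obstacle'' but leave it unresolved.

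The paper's fix is short and sidesteps the circularity. Introduce the weaker Banach space $X$ with norm $\|u\|_X=\|(c_0-\pd_x^2)^{-2}u\|_{L^2_a}$. Because the $\zeta^i_c$ and their $y$- and $c$-derivatives are smooth and exponentially decaying, the map $G$ of \eqref{eq:map-G} is still smooth on $X\times\R\times\R_+$, and the implicit function theorem yields a smooth $\Phi:\widetilde U_0\subset X\to\R^2$. The key point is that the equation \eqref{eq:baru} for $\bar u=u-\tv_1$ is closed (no modulation parameters appear) and loses only three derivatives, so $\bar u\in C([0,\infty);L^2_a)\cap C^1((0,\infty);X)$ directly from \eqref{eq:gKdV}. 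Then $(x(t),c(t))=\Phi(\bar u(t))$ is $C^1$ by the chain rule. Once this is in hand, your derivation of the modulation system becomes a legitimate computation rather than a bootstrap.
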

\begin{proof}
Let $X$ be a Banach space with the norm
$\|u\|_X=\|(c_0-\pd_x^2)^{-2}u\|_{L^2_a}$. 
Then $G$ defined by \eqref{eq:map-G} is a smooth mapping from
$X\times \R_+\times\R$ to $\R^2$. Thus by the implicit function
theorem there exist an $X$-neighborhood $\widetilde{U}_0$ of $0$ and an
$\R^2$-neighborhood $\widetilde{U}_1$ of $(0,c_0)$ such that
for any $w\in \widetilde{U}_0$, there exists a unique
$(\gamma,c)\in\widetilde{U}_1$ satisfying \eqref{eq:imp}.
Moreover, the mapping
$\widetilde{U}_0\ni w\mapsto \Phi(w)=(\gamma,c)\in \widetilde{U}_1$ is smooth.
Since 
$$\bar{u}(t)=u(t)-\tv_1(t)\in C([0,\infty);L^2_a) \cap C^1((0,\infty);X)$$
by Lemma~\ref{lem:difu-v1} and \eqref{eq:gKdV}, we have
$(x(t)\,,\,c(t))=\Phi(\bar{u}(t))$ is $C^1$ on $(0,T)$. 
The other part of the proof is exactly the same as the proof of
\cite[Proposition~9.3]{MPQ}.
\end{proof}

In order to prove stability of $1$-solitons,  we will estimate
the following quantities in the subsequent sections.
Let
$$\|w\|_W:=\left(\int_\R e^{-2a|x|}w^2(x)\,dx\right)^{1/2}\,,\quad
\|w\|_{W_1}:=\left(\|w\|_W^2+\|\pd_xw\|_W^2\right)^{1/2}\,,$$
and let
\begin{align*}
& \bM_1(T)= \sup_{t\in[0,T]}\|v_1(t)\|_{L^2}+\|v_1(t)\|_{L^2(0,T;W_1)}\,,\\
& \bM_2(T)= 
\begin{cases}
& \sup_{t\in[0,T]}\|v_2(t)\|_{L^2_a}+\|v_2\|_{L^2(0,T;L^2_a)} \text{ if $p=2$,}\\
& \sup_{t\in[0,T]}\|v_2(t)\|_{L^2_a}+\|v_2\|_{L^2(0,T;H^1_a)} \text{ if $p=3$,}
\end{cases}\\
& \bM_v(T)=\sup_{0\le t\le T}\|v(t)\|_{L^2}^2\,,\quad
\bM_c(T)=\sup_{t\in[0,T]}|c(t)-c_0|\,,\\
& \bM_x(T)=\sup_{t\in[0,T]}|\dot{x}(t)-c(t)|\,,\quad
\bM_\gamma(T)=\sup_{t\in[0,T]}|\dot{\gamma}(t)-c(t)|\,,
\\ & \bM_{tot}(T)=
\begin{cases}
& \bM_1(T)+\bM_2(T)+\bM_v(T)+\bM_c(T)+\bM_x(T)\text{ if $p=2$,}\\
& \bM_1(T)+\bM_2(T)+\bM_v(T)+\bM_c(T)+\bM_\gamma(T)\text{ if $p=3$,}  
\end{cases}
\end{align*}
where $\gamma(t)$ is a function which shall be introduced in
Lemma~\ref{lem:modulation-3}.
We remark that $\|v_1(t)\|_{W_1}$ measures the interaction between
the solitary wave and  $v_1$.

\section{Modulation equations on the speed and the phase shift}
\label{sec:modulation}
In this section, we will derive modulation equations on the speed parameter 
$c(t)$ and the phase shift parameter $x(t)$.

Differentiating \eqref{eq:orth1} and \eqref{eq:orth2} with respect to $t$ 
and substituting \eqref{eq:v2} into the resulting equation,
we have for $i=1$ and $2$,
\begin{multline*}
0= \frac{d}{dt}\la v_2(t)\,,\, \zeta_{c(t)}^i\ra
 \\= -\la v_2(t)\,,\,\mL_{c(t)}^*\zeta_{c(t)}^i\ra
-\la \ell,\zeta^i_{c(t)}\ra
+\dot{c}\la v_2\,,\,\pd_c\zeta^i_{c(t)}\ra
-(\dot{x}-c)\la v_2\,,\, \pd_y\zeta^i_{c(t)}\ra
+\la N\,,\,\pd_y\zeta^i_{c(t)}\ra\,.
\end{multline*}
By \eqref{eq:etas} and \eqref{eq:orth2}, we have
$\la v_2(t)\,,\, \mL_{c(t)}^*\zeta_{c(t)}^i\ra=0$ for $i=1$, $2$.
Hence it follows that
\begin{equation}
  \label{eq:modulation}
\mathcal{A}(t)  \begin{pmatrix}c(t)-\dot{x}(t) \\   \dot{c}(t)  \end{pmatrix}
=\begin{pmatrix}
\la N\,,\, \pd_y\zeta_{c(t)}^1\ra \\ \la N\,,\, \pd_y\zeta_{c(t)}^2\ra  
\end{pmatrix}\,,
\end{equation}
where 
$$\mathcal{A}(t)=I-
\begin{pmatrix}
\la v_2\,,\, \pd_y\zeta_c^1\ra & \enskip\la v_2\,,\, \pd_c\zeta_c^1\ra
\\ \la v_2\,,\, \pd_y\zeta_c^2\ra & \enskip\la v_2\,,\, \pd_c\zeta_c^2\ra
\end{pmatrix}\,.$$
The following lemma provides estimates for $c(t)$ and $x(t)$ in terms
of the weighted $L^2$-norms of $v_1$ and $v_2$.
\begin{lmm}
\label{lem:modulation}
Let $p=2$ and $c_0>0$ and $0<a<\sqrt{c_0}/2$.
Then there exists a positive constant
$\delta_2$ such that if the decomposition \eqref{eq:decomp2} satisfying
\eqref{eq:orth1} and \eqref{eq:orth2} exists on $[0,T]$
and $\bM_1(T)+\bM_2(T)+\bM_c(T)<\delta_2$, then for $t\in[0,T]$,
\begin{equation}
  \label{eq:x,c-est}
  \begin{split}
& |\dot{c}(t)|+|\dot{x}(t)-c(t)| \\ & \lesssim  \|v_1(t)\|_W
+\|v_2\|_{L^2_a}(\|v_1(t)\|_W+\|v_2(t)\|_{L^2_a})\,.    
  \end{split}
\end{equation}
Furthermore,
\begin{gather}
  \label{eq:c-refine}
\frac{d}{dt}\left\{c(t)
+\theta_1(c(t))\left\la v_1(t),\varphi_{c(t)}\right\ra\right\}
=O\left(\|v_1(t)\|_W^2+\|v_2(t)\|_{L^2_a}^2\right)\,,\\
\label{eq:Mcx}
\bM_c(T)+\bM_x(T)\lesssim \bM_1(T)+\bM_2(T)^2\,.
\end{gather}
\end{lmm}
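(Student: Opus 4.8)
The plan is to work directly from the modulation system \eqref{eq:modulation}, treating $\mathcal{A}(t)$ as a small perturbation of the identity and estimating the right-hand side $\la N,\pd_y\zeta^i_{c(t)}\ra$. First I would establish pointwise bounds on $\zeta^i_c$ and its derivatives $\pd_y\zeta^i_c$, $\pd_c\zeta^i_c$: these are smooth and, because $\varphi_c$ and $\pd_c\varphi_c$ decay exponentially, they lie in $L^2_{-a}$ for $a<\sqrt{c_0}/2$, with norms bounded uniformly for $c$ near $c_0$; moreover $\pd_y\zeta^i_c$ decays like $e^{-a|y|}$ so that $|\la g,\pd_y\zeta^i_c\ra|\lesssim \|g\|_W$. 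From the smallness hypothesis $\bM_1+\bM_2+\bM_c<\delta_2$ and Cauchy–Schwarz, the matrix entries of $\mathcal{A}(t)-I$ are $O(\|v_2(t)\|_{L^2_a})$, hence $O(\delta_2)$, so $\mathcal{A}(t)$ is invertible with $\mathcal{A}(t)^{-1}=I+O(\|v_2(t)\|_{L^2_a})$.

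Next I would decompose the nonlinear term $N=N_1+N_2$ (in the $p=2$ case these are the quadratic expressions from \eqref{eq:v2}) and bound each contribution. For $N_1=f(\varphi_c+v_1)-f(\varphi_c)-f(v_1)$, which for $p=2$ is essentially $6\varphi_c v_1$, pairing against $\pd_y\zeta^i_c$ and integrating by parts (moving the $\pd_y$ onto $\varphi_c\zeta^i_c$, which is exponentially localized) gives a term $O(\|v_1(t)\|_W)$, linear in $v_1$. For $N_2=f(\varphi_c+v)-f(\varphi_c+v_1)-f'(\varphi_c)v_2$, note $v=v_1+v_2$, so this is quadratic and schematically of size $\lesssim |v_2|(|v_1|+|v_2|+|\varphi_c|)$; pairing with $\pd_y\zeta^i_c$ and using that $\pd_y\zeta^i_c$ is exponentially weighted, the worst term is $\lesssim \|v_2(t)\|_{L^2_a}(\|v_1(t)\|_W+\|v_2(t)\|_{L^2_a})$, since the weight $e^{-a|y|}$ is absorbed against one factor of $v_2\in L^2_a$. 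Combining with the $\mathcal{A}^{-1}$ expansion yields \eqref{eq:x,c-est}.

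For the refined estimate \eqref{eq:c-refine}, the point is that the genuinely linear-in-$v_1$ term in $\dot c$ is a total time derivative up to quadratic errors. I would start from the equation for $\dot c$ read off \eqref{eq:modulation}, namely $\dot c = \theta_1(c)\la N,\pd_y\varphi_c\ra + (\text{quadratic})$ after using $\pd_y\zeta^2_c=\theta_1(c)\pd_y\varphi_c$; the leading piece is $6\theta_1(c)\la \varphi_c v_1,\pd_y\varphi_c\ra$-type. On the other hand, differentiating $\theta_1(c(t))\la v_1(t),\varphi_{c(t)}\ra$ in $t$ and substituting the equation \eqref{eq:v1} for $\pd_t v_1$, one integrates by parts to see that the principal term cancels the linear piece of $\dot c$, leaving only terms quadratic in $(v_1,v_2)$ weighted by $W$- or $L^2_a$-norms — here one uses $\|v_1\|_W\|\varphi_c\|_{W}\lesssim\|v_1\|_W$ absorbed, and that $\dot c$, $\dot x - c$ are themselves $O(\|v_1\|_W+\ldots)$ from \eqref{eq:x,c-est} so products with them are higher order. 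This computation is routine but bookkeeping-heavy; the main obstacle is organizing the cancellation cleanly and making sure every leftover term is genuinely controlled by $\|v_1(t)\|_W^2+\|v_2(t)\|_{L^2_a}^2$ rather than, say, $\|v_1\|_{L^2}^2$ (which would not close).

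Finally, \eqref{eq:Mcx} follows by integrating. For $\bM_c$: integrate \eqref{eq:c-refine} from $0$ to $t$, use $c(0)=c_0$ and $\la v_1(0),\varphi_{c_0}\ra = O(\|v_1(0)\|_W)$, bound $\theta_1(c(t))\la v_1(t),\varphi_{c(t)}\ra = O(\|v_1(t)\|_W)\le O(\bM_1(T))$ at the endpoint, and estimate $\int_0^T(\|v_1\|_W^2+\|v_2\|_{L^2_a}^2)\,dt \le \bM_1(T)^2+\bM_2(T)^2$ by definition of the norms $\bM_1,\bM_2$ (which include the $L^2(0,T;W_1)$ and $L^2(0,T;L^2_a)$ pieces). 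This gives $\bM_c(T)\lesssim \bM_1(T)+\bM_2(T)^2$. For $\bM_x$: from \eqref{eq:x,c-est}, $|\dot x(t)-c(t)|\lesssim \|v_1(t)\|_W + \|v_2\|_{L^2_a}(\|v_1\|_W+\|v_2\|_{L^2_a})$; taking the sup over $t\in[0,T]$ and using $\|v_1(t)\|_W\le \|v_1(t)\|_{L^2}\le\bM_1(T)$ and $\|v_2(t)\|_{L^2_a}\le\bM_2(T)$ gives $\bM_x(T)\lesssim \bM_1(T)+\bM_2(T)^2$ under the smallness assumption. Adding the two bounds yields \eqref{eq:Mcx}.
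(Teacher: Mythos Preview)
Your proposal is correct and follows essentially the same route as the paper: bound the off-diagonal entries of $\mathcal{A}(t)$ by $\|v_2\|_{L^2_a}$, compute $N_1=6\varphi_c v_1$ and $N_2=6v_1v_2+3v_2^2$ explicitly to get \eqref{eq:x,c-est}, then differentiate $\theta_1(c)\la v_1,\varphi_c\ra$ using \eqref{eq:v1} and the identity $\varphi_c'''-c\varphi_c'=-3(\varphi_c^2)'$ from \eqref{eq:B} to extract the linear-in-$v_1$ part of $\dot c$ as a total time derivative. One small point to tighten: your schematic bound $|N_2|\lesssim |v_2|(|v_1|+|v_2|+|\varphi_c|)$ is loose---the $\varphi_c v_2$ piece actually cancels exactly (that is the role of the $f'(\varphi_c)v_2$ subtraction), and if it were present it would produce a linear-in-$v_2$ contribution $O(\|v_2\|_{L^2_a})$ rather than the quadratic one you (correctly) claim; just write $N_2=6v_1v_2+3v_2^2$ explicitly and the issue disappears.
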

\begin{lmm}
\label{lem:modulation-3}
Let $p=3$ and $c_0>0$ and $0<a<\sqrt{c_0}/2$. Suppose $v_0\in
H^1(\R)$.  Then there exists a positive constant $\delta_2$ such that
if the decomposition \eqref{eq:decomp2} satisfying \eqref{eq:orth1}
and \eqref{eq:orth2} exists on $[0,T]$ and
$\bM_1(T)+\bM_2(T)+\bM_v(T)+\bM_c(T)<\delta_2$, then for $t\in[0,T]$,
\begin{equation}
  \label{eq:x,c-est-3}
  \begin{split}
& |\dot{c}(t)|+|\dot{x}(t)-c(t)| \lesssim 
\|v_1(t)\|_W+\left(\|v_1(t)\|_W^2+\|v_2\|_{L^2_a}^2\right)
(1+\|v_1(t)\|_{L^\infty}+\|v(t)\|_{L^\infty})\,.
  \end{split}
\end{equation}
Furthermore,
\begin{equation}
  \label{eq:c-refine-3}
\frac{d}{dt}\left\{c(t)+
\theta_1(c(t))\left\la v_1(t),\varphi_{c(t)}\right\ra\right\}
= O\left(\|v_1(t)\|_{W_1}+\|v_2(t)\|_{H^1_a}\right)^2\,,
\end{equation}
and there exists a $C^1$-function $\gamma(t)$ such that $\gamma(0)=0$,
\begin{gather}
  \label{eq:dx-dg}
\dot{\gamma}(t)-\dot{x}(t)=O\left(
(\|v_1(t)\|_{W_1}+\|v_2(t)\|_{H^1_a})^2(\|v_1(t)\|_{L^2}+\|v(t)\|_{L^2})\right)\,,
\\ \label{eq:dg-c}
\dot{\gamma}(t)-c(t)=O\left(\|v_1\|_W+\|v_2(t)\|_{L^2_a}^2\right)\,,\\
  \label{eq:Mcg}
\bM_c(T)+\bM_\gamma(T)\lesssim \bM_1(T)+\bM_2(T)^2\,.
\end{gather}
\end{lmm}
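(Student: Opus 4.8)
The starting point is the linear system \eqref{eq:modulation}. Since $\zeta^1_c$, $\zeta^2_c$ and their $y$- and $c$-derivatives decay exponentially (they are built from $\varphi_c$ and $\pd_c\varphi_c$) and $2a<\sqrt c$, they lie in $L^2_{-a}$, and, using $v_2(t)\in L^2_a$ with $\|v_2(t)\|_{L^2_a}\le\delta_2$, the matrix $\mathcal{A}(t)$ has the form $I+O(\|v_2(t)\|_{L^2_a})$ and is invertible for $\delta_2$ small, with $\mathcal{A}(t)^{-1}=I+O(\|v_2(t)\|_{L^2_a})$. Everything then reduces to estimating the pairings $\la N,\pd_y\zeta^i_{c(t)}\ra$. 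Expanding $N=N_1+N_2$ with $f(u)=3u^3$ gives
$$N_1=9\varphi_c^2v_1+9\varphi_cv_1^2\,,\qquad N_2=18\varphi_cv_1v_2+9v_1^2v_2+9\varphi_cv_2^2+9v_1v_2^2+3v_2^3\,.$$
Each $\pd_y\zeta^i_c$ is exponentially localized, so pairing term by term, using Cauchy--Schwarz against the weights $e^{-2a|y|}$ and $e^{2ay}$ (which give $\|\cdot\|_W$ and $\|\cdot\|_{L^2_a}$, the products with $\varphi_c$ and $\pd_c\varphi_c$ being bounded because $2a<\sqrt c$), pulling out an $L^\infty$-norm from one factor whenever a purely weighted bound is not available and using $\|v_2\|_{L^\infty}\le\|v_1\|_{L^\infty}+\|v\|_{L^\infty}$, one gets $|\la N,\pd_y\zeta^i_{c(t)}\ra|\lesssim\|v_1(t)\|_W+(\|v_1(t)\|_W^2+\|v_2(t)\|_{L^2_a}^2)(1+\|v_1(t)\|_{L^\infty}+\|v(t)\|_{L^\infty})$; re-inserting the $O(\|v_2\|_{L^2_a})$ factor from $\mathcal{A}(t)^{-1}-I$ yields \eqref{eq:x,c-est-3}.

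For the refined equation \eqref{eq:c-refine-3} I would compute $\frac{d}{dt}\{c+\theta_1(c)\la v_1,\varphi_c\ra\}$ directly, inserting \eqref{eq:v1} for $\pd_tv_1$ and the second row of \eqref{eq:modulation} (with $\zeta^2_c=\theta_1(c)\varphi_c$) for $\dot c$. The linear-in-$v_1$ part of $\dot c$ equals $\theta_1(c)\la v_1,9\varphi_c^2\pd_y\varphi_c\ra=\theta_1(c)\la v_1,3\pd_y(\varphi_c^3)\ra$, which by \eqref{eq:B} with $p=3$ (so $3\varphi_c^3=c\varphi_c-\varphi_c''$) is $\theta_1(c)\la v_1,c\pd_y\varphi_c-\pd_y^3\varphi_c\ra$; differentiating $\theta_1(c)\la v_1,\varphi_c\ra$ and using \eqref{eq:v1} contributes the matching linear terms $\theta_1(c)(-c\la v_1,\pd_y\varphi_c\ra+\la v_1,\pd_y^3\varphi_c\ra)$ (the coefficient $-\dot x$ in \eqref{eq:v1} producing $-c$ modulo the higher-order factor $\dot x-c$). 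These cancel identically, so only quadratic terms remain; the cubic remainders (such as $\la v_1^3,\pd_y\varphi_c\ra$ and the $(\dot x-c)$-, $\dot c$-dependent terms) are absorbed into $O((\|v_1(t)\|_{W_1}+\|v_2(t)\|_{H^1_a})^2)$ using the smallness hypothesis, the weighted Sobolev inequality $\|v_2\|_{L^\infty_a}\lesssim\|v_2\|_{L^2_a}^{1/2}\|v_2\|_{H^1_a}^{1/2}$, the localization of $\pd_y\varphi_c$ (so that $\|v_1\|_{W_1}$ replaces $\|v_1\|_{L^\infty}$), and the a priori boundedness of $\|v_1\|_{L^2}+\|v\|_{L^2}$. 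Integrating \eqref{eq:c-refine-3} in time, with $c(0)=c_0$ and $\|v_1(t)\|_W\le\|v_1(t)\|_{L^2}\le\bM_1(T)$, gives $\bM_c(T)\lesssim\bM_1(T)+\bM_2(T)^2$, exactly as in the proof of Lemma~\ref{lem:modulation}.

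The genuinely new point for $p=3$ is the modified phase $\gamma$: \eqref{eq:x,c-est-3} alone does not make $\dot x-c$ small, since in Theorem~\ref{thm:2} only $\|v_0\|_{L^2}^{3/4}\|v_0\|_{H^1}^{1/4}$ is small, so $\|v\|_{L^\infty}$ and $\|v_2\|_{L^\infty}$ may be large. I would split $c-\dot x=P(t)+r(t)$, where $P(t)=\la 9\varphi_c^2v_1+9\varphi_cv_1^2+18\varphi_cv_1v_2+9\varphi_cv_2^2,\pd_y\zeta^1_c\ra=O(\|v_1(t)\|_W+\|v_2(t)\|_{L^2_a}^2)$ collects the linear term and the purely weighted quadratic pairings, and $r(t)$ collects the cubic-in-$v$ pairings $\la 9v_1^2v_2+9v_1v_2^2+3v_2^3,\pd_y\zeta^1_c\ra$ together with the contribution of $\mathcal{A}(t)^{-1}-I$. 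For $r(t)$ one exploits the localization of $\pd_y\zeta^1_c$ and interpolation/weighted Sobolev to factor out one power $\|v_1(t)\|_{L^2}$ or $\|v(t)\|_{L^2}$ (the small quantity of Theorem~\ref{thm:2}), the remaining factors being controlled by $\|v_1(t)\|_{W_1}$ and $\|v_2(t)\|_{H^1_a}$, obtaining $r(t)=O((\|v_1(t)\|_{W_1}+\|v_2(t)\|_{H^1_a})^2(\|v_1(t)\|_{L^2}+\|v(t)\|_{L^2}))$. Then $\gamma(t):=x(t)+\int_0^tr(s)\,ds$ is $C^1$ with $\gamma(0)=x(0)=0$; by construction $\dot\gamma-\dot x=r$, which is \eqref{eq:dx-dg}, and $\dot\gamma-c=(\dot x-c)+r=-P$ (using $\dot x-c=-P-r$), which is $O(\|v_1(t)\|_W+\|v_2(t)\|_{L^2_a}^2)$, proving \eqref{eq:dg-c}; finally $\bM_\gamma(T)\lesssim\bM_1(T)+\bM_2(T)^2$ follows from \eqref{eq:dg-c} with $\|v_1(t)\|_W\le\bM_1(T)$ and $\|v_2(t)\|_{L^2_a}^2\le\bM_2(T)^2$, completing \eqref{eq:Mcg}. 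This construction of $\gamma$ follows the scheme of \cite[\S9]{MPQ} and \cite{M1}.

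The main obstacle is this last step: organizing the splitting $c-\dot x=P+r$ so that \emph{simultaneously} $\dot\gamma-c$ has the clean form \eqref{eq:dg-c} with no $L^\infty$-factor and $\dot\gamma-\dot x$ is cubic with the unweighted-$L^2$ gain of \eqref{eq:dx-dg} --- this is exactly what forces, for the cubic pairings, the device of pulling out weighted $L^\infty$/Sobolev norms while keeping one power of an unweighted $L^2$-norm --- and checking that the correction $\gamma-x=\int_0^tr$ then stays $O(\bM_v(T)^{1/2}(\bM_1(T)^2+\bM_2(T)^2))$. Everything else is the weighted quadratic-estimate bookkeeping already carried out for $p=2$ in Lemma~\ref{lem:modulation}.
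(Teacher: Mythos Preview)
Your argument follows the paper's proof almost exactly: the same modulation system \eqref{eq:modulation}, the same decomposition $N_1=N_{11}+N_{12}$ and $N_2=N_{21}+N_{22}$, the same normal-form cancellation of the linear-in-$v_1$ part of $\dot c$ against $\frac{d}{dt}(\theta_1(c)\la v_1,\varphi_c\ra)$, and the same idea of defining $\gamma$ by dropping $N_{22}$ from the phase equation.

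There is, however, one real slip in your construction of $\gamma$. You put the entire contribution of $\mathcal{A}(t)^{-1}-I$ into $r(t)=\dot\gamma-\dot x$. But $\mathcal{A}(t)^{-1}-I=O(\|v_2(t)\|_{L^2_a})$, and applying it to the \emph{linear} pairing $\la N_{11},\pd_y\zeta^i_c\ra=O(\|v_1\|_W)$ produces a term $O(\|v_2\|_{L^2_a}\|v_1\|_W)$ in $r$. This is only quadratic and carries no unweighted $L^2$-factor, so it does \emph{not} satisfy the cubic-with-$L^2$-gain form \eqref{eq:dx-dg}. The paper avoids this by keeping $\mathcal{A}^{-1}$ inside the definition of $c-\dot\gamma$, setting
\[
c(t)-\dot\gamma(t)=(1,0)\,\mathcal{A}(t)^{-1}
\begin{pmatrix}\la N_1+N_{21},\pd_y\zeta^1_{c(t)}\ra\\ \la N_1+N_{21},\pd_y\zeta^2_{c(t)}\ra\end{pmatrix},
\]
so that $\dot\gamma-\dot x$ is driven \emph{only} by $N_{22}$ (with a bounded $\mathcal{A}^{-1}$ prefactor), giving \eqref{eq:dx-dg} cleanly, while the $O(\|v_2\|_{L^2_a}\|v_1\|_W)$ term lands in $\dot\gamma-c$, where it is harmlessly absorbed into $O(\|v_1\|_W+\|v_2\|_{L^2_a}^2)$ by Young's inequality. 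In your language: the $(\mathcal{A}^{-1}-I)$-correction belongs in $P$, not in $r$. With that one reallocation, your sketch matches the paper's proof.
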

As we will see in Section~\ref{sec:thms},
Lemmas~\ref{lem:modulation} and \ref{lem:modulation-3} imply
that the modulating speed $c(t)$ converges to a fixed speed as $t\to\infty$.
We will use \eqref{eq:c-refine} and \eqref{eq:c-refine-3} to show that $c(t)$ 
tends to a fixed speed speed $c_+$ as $t\to\infty$.
\begin{proof}[Proof of Lemma~\ref{lem:modulation}]
If $\delta_2$ is small enough, then $2a<\sqrt{c(t)}$ for all $t\in[0,T]$,
and it follows from \eqref{eq:defzetas} that for $i=1$ and $2$,
$\|\pd_c\zeta_{c(t)}^i\|_{L^2_{-a}}$ and
$\sup_{y\in\R}e^{2a|y|}\pd_y\zeta_{c(t)}^i(y)$ are uniformly bounded on $[0,T]$.
Thus we have for $i=1$ and $2$,
\begin{equation*}
|\la v_2,\pd_y\zeta_{c(t)}^i\ra|+|\la v_2,\pd_c\zeta_{c(t)}^i\ra|
\lesssim \|v_2\|_{L^2_a}\lesssim \delta_2\,.  
\end{equation*}
Moreover,
\begin{gather*}
|\la N_1,\pd_y\zeta_{c(t)}^i\ra|\lesssim \|v_1(t)\|_W\,,\quad
|\la N_2,\pd_y\zeta_{c(t)}^i\ra|\lesssim \|v_1(t)\|_W\|v_2\|_{L^2_a}\,,
+\|v_2\|_{L^2_a}^2\,.
\end{gather*}
because $N_1=6\varphi_cv_1$ and $N_2=6v_1v_2+3v_2^2$. 
Combining the above with \eqref{eq:modulation}, we obtain \eqref{eq:x,c-est}.
Moreover,
\begin{equation}
  \label{eq:dotc}
\begin{split}
\dot{c}(t)=& \la N,\pd_y\zeta_{c(t)}^2\ra\left(1+O(\|v_2(t)\|_{L^2_a})\right)
+O\left(\|v_2(t)\|_{L^2_a}|\la N,\pd_y\zeta_{c(t)}^1\ra|\right)
\\=& \la N_1,\pd_y\zeta_{c(t)}^2\ra+O(\|v_1(t)\|_W^2+\|v_2(t)\|_{L^2_a}^2)\,.
\end{split}  
\end{equation}
Next we will rewrite $\la N_1,\pd_y\zeta_{c}^2\ra$ as a sum of time derivative
$\theta_1(c(t))\la v_1(t,\cdot), \varphi_{c(t)}\ra$ and a remainder part
which is integrable in time.
Substituting \eqref{eq:v1} and integrating the resulting equation by parts,
we have
\begin{equation}
  \label{eq:v1normal}
\begin{split}
& \frac{d}{dt}\la v_1\,,\,\varphi_{c}\ra
-\dot{c}\la v_1\,,\, \pd_c\varphi_{c}\ra
+(\dot{x}-c)\la v_1\,,\, \varphi_{c}'\ra
\\ = & 
\la v_1\,,\, \varphi_c'''-c\varphi_c'\ra
+3\la v_1^2\,,\, \varphi_{c}'\ra\,.
\end{split}  
\end{equation}
By \eqref{eq:B} and \eqref{eq:v1normal},
\begin{align*}
\la N_1,\pd_y\zeta_{c}^2\ra=&
3\theta_1(c)\left\la v_1\,,\, \pd_y\left(\varphi_{c}^2\right)\right\ra
\\=& \theta_1(c)\la v_1\,,\, c\varphi_c'-\varphi_c'''\ra
\\=& -\frac{d}{dt}\left(\theta_1(c)\la v_1,\varphi_{c}\ra\right)
+\frac{d\theta_1(c)}{dt}\la v_1,\varphi_{c}\ra
\\ & + \theta_1(c)\{\dot{c}\la v_1\,,\, \pd_c\varphi_{c}\ra
-(\dot{x}-c)\la v_1\,,\, \varphi_{c}'\ra
+3\la v_1^2\,,\, \pd_y\varphi_{c}\ra\}\,.
\end{align*}
Substituting \eqref{eq:x,c-est} into the above, we have
$$\la N_1,\pd_y\zeta_{c}^2\ra
+\frac{d}{dt}\left(\theta_1(c)\la v_1,\varphi_{c}\ra\right)
=O(\|v_1\|_W^2+\|v_2\|_{L^2_a}^2)\,.$$
Thus \eqref{eq:c-refine} follows from \eqref{eq:dotc} and the above.
Eq.~\eqref{eq:Mcx} follows immediately from \eqref{eq:x,c-est} and
\eqref{eq:c-refine}.
\end{proof}
\begin{proof}[Proof of Lemma~\ref{lem:modulation-3}]
By the definition, we have
\begin{gather}
\label{eq:defN1} N_1=N_{11}+N_{12}\,,\quad
N_{11}=9\varphi_c^2v_1\,,\quad N_{12}=9\varphi_cv_1^2\,,\\
\label{eq:defN2} N_2=N_{21}+N_{22}\,,\quad
N_{21}=9\varphi_cv_2(2v_1+v_2)\,,\quad N_{22}=3v_2(3v_1^2+3v_1v_2+v_2^2)\,,
\end{gather}
and for $i=1$, $2$,
\begin{gather}
\label{eq:N1s}
|\la N_{11},\pd_y\zeta_{c(t)}^i\ra|\lesssim \|v_1(t)\|_W\,,\quad 
|\la N_{12},\pd_y\zeta_{c(t)}^i\ra|\lesssim \|v_1(t)\|_W^2\,,
\\ \label{eq:N21}
|\la N_{21},\pd_y\zeta_{c(t)}^i\ra|\lesssim
(\|v_1(t)\|_W+\|v_2(t)\|_{L^2_a})^2\,,
\\ \notag
|\la N_{22},\pd_y\zeta_{c(t)}^i\ra|\lesssim
(\|v_1(t)\|_W+\|v_2(t)\|_{L^2_a})^2\|v_2(t)\|_{L^\infty}\,,
\end{gather}
in the same way as in the proof of Lemma~\ref{lem:modulation}.
Combining the above with \eqref{eq:modulation}, we have \eqref{eq:x,c-est-3}.
\par 
Next we will show \eqref{eq:c-refine-3}.
Since $|\pd_y \zeta_{c(t)}^i(y)|\lesssim e^{-2a|y|}$,
Lemma~\ref{cl:winfty} implies that
\begin{equation}
  \label{eq:wsa}
\int_\R |\pd_y \zeta_{c(t)}^i(y)||v^3(t,y)|\,dy
\lesssim \|v(t)\|_{L^2} \|v(t)\|_{W_1}^2
\quad\text{for $i=1$, $2$.}
\end{equation}
By \eqref{eq:wsa} and the H\"older inequality,
\begin{equation}
  \label{eq:N22}
|\la N_{22},\pd_y\zeta_{c(t)}^i\ra|\lesssim
(\|v_1(t)\|_{L^2}+\|v(t)\|_{L^2})(\|v_1(t)\|_{W_1}+\|v_2(t)\|_{H^1_a})^2\,,
\end{equation}
whence
\begin{equation}
  \label{eq:x,c-est2}
  \begin{split}
&|\dot{c}(t)|+|\dot{x}(t)-c(t)|\\ \lesssim & \|v_1(t)\|_W
+(1+\|v_1(t)\|_{L^2}+\|v(t)\|_{L^2})(\|v_1(t)\|_{W_1}+\|v_2(t)\|_{H^1_a})^2      
  \end{split}
\end{equation}
follows from \eqref{eq:modulation}, \eqref{eq:N1s}, \eqref{eq:N21} and
\eqref{eq:N22}.

As in the proof of Lemma~\ref{lem:modulation}, we have
\begin{equation}
  \label{eq:N11}
  \begin{split}
&\la N_{11},\pd_y\zeta_{c(t)}^2\ra 
+\frac{d}{dt}\left(\theta_1(c)\la v_1,\varphi_c\ra\right)
\\ = & 
\la v_1,\varphi_c\ra\frac{d}{dt}\theta_1(c)
+\theta_1(c)\left\{\dot{c}\la v_1,\pd_c\varphi_c\ra
-(\dot{x}-c)\la v_1,\varphi_c'\ra +3\la v_1^3,\varphi_c'\ra\right\}
\\ =& O\left((\|v_1(t)\|_{W_1}+\|v_2(t)\|_{H^1_a})^2\right)\,.
  \end{split}
\end{equation}
In the last line, we use \eqref{eq:x,c-est2} and the fact that
$$|\la v_1^3,\varphi_{c(t)}'\ra|\lesssim \|v_1(t)\|_{L^2}\|v_1(t)\|_{W_1}^2
=\|v_0\|_{L^2}\|v_1(t)\|_{W_1}^2\,.$$
Combining \eqref{eq:modulation}, \eqref{eq:N1s}, \eqref{eq:N21}, \eqref{eq:N22} and \eqref{eq:N11},
we obtain \eqref{eq:c-refine-3}.
\par
Finally, we will show \eqref{eq:dx-dg} and \eqref{eq:dg-c}.
Let $\gamma(t)$ be a $C^1$-function satisfying
$$\gamma(0)=0\,,\quad c(t)-\dot{\gamma}(t)=(1,0)\mathcal{A}(t)^{-1}
\begin{pmatrix} \la N_1+N_{21},\pd_y\zeta_{c(t)}^1\ra
\\  \la N_1+N_{21},\pd_y\zeta_{c(t)}^2\ra\end{pmatrix}\,.$$
The \eqref{eq:dg-c} follows from \eqref{eq:N1s} and \eqref{eq:N21}.
In view of the definition of $\gamma$ and \eqref{eq:modulation},
\begin{align*}
  |\dot{\gamma}(t)-\dot{x}(t)| \lesssim &
  (1+\|v\|_{L^2_a})\sum_{i=1,2}|\la N_{22},\pd_y\zeta_{c(t)}^i\ra| \\
  \lesssim &
  (1+\|v\|_{L^2_a})(\|v_1\|_{L^2}+\|v_2\|_{L^2})(\|v_1\|_W+\|v_2\|_{H^1_a})^2\,.
\end{align*}
Eq.~\eqref{eq:Mcg} immediately follows from \eqref{eq:c-refine-3},
\eqref{eq:dx-dg} and \eqref{eq:dg-c}.
Thus we complete the proof.
\end{proof}

\section{The $L^2$-estimate of $v$}
\label{sec:L2}
In this section, we will estimate $v$  by using
the $L^2$-conservation law of the gKdV equation.
\begin{lmm}
  \label{lem:v-L2}
Suppose $v_0\in L^2(\R)$ if $p=2$ and $v_0\in H^1(\R)$ if $p=3$.
Then there exist positive constants $\delta_3$ and $C$ such that 
if \eqref{eq:decomp2} satisfying \eqref{eq:orth2} exists and
$\|v_0\|_{L^2}+\bM_2(T)+\bM_c(T)<\delta_3$ for a $T\in(0,\infty]$, then
$$\bM_v(T)\le C(\bM_c(T)+\|v_0\|_{L^2})\,.$$
\end{lmm}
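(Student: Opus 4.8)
The plan is to exploit the conservation of the $L^2$-norm $\int_\R u^2\,dx$ along the flow of \eqref{eq:gKdV}, combined with the decomposition $u=\varphi_{c(t)}+\tv_1 + v_2$ (equivalently $u=\varphi_{c(t)}+v$, where $v=v_1+v_2$ and $v_1(t,y)=\tv_1(t,x)$). Writing $Q(u)=\|u\|_{L^2}^2$ and using $Q(u(t))=Q(u(0))=\|\varphi_{c_0}+v_0\|_{L^2}^2$, I would expand
\[
\|\varphi_{c(t)}\|_{L^2}^2+2\la\varphi_{c(t)},v(t)\ra+\|v(t)\|_{L^2}^2
=\|\varphi_{c_0}\|_{L^2}^2+2\la\varphi_{c_0},v_0\ra+\|v_0\|_{L^2}^2.
\]
Since $\|\varphi_c\|_{L^2}^2$ is a smooth, strictly monotone function of $c$ for $p=2$, we have $\|\varphi_{c(t)}\|_{L^2}^2-\|\varphi_{c_0}\|_{L^2}^2=O(|c(t)-c_0|)=O(\bM_c(T))$. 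Hence, solving for $\|v(t)\|_{L^2}^2$,
\[
\|v(t)\|_{L^2}^2 \le C\bM_c(T)+C\|v_0\|_{L^2}+C\|v_0\|_{L^2}^2
+2|\la\varphi_{c(t)},v(t)\ra|.
\]
The point is that the cross term $\la\varphi_{c(t)},v(t)\ra$ must be controlled: this is where the orthogonality condition and the weighted estimates enter, and it is the main obstacle.

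To handle the cross term I would split $v=v_1+v_2$. For the $v_2$ part, $\varphi_{c(t)}$ is exponentially localized, so $|\la\varphi_{c(t)},v_2(t)\ra|\lesssim \|\varphi_{c(t)}\|_{L^2_{-a}}\|v_2(t)\|_{L^2_a}\lesssim \bM_2(T)$; in fact one can do better using \eqref{eq:orth2}, since $\zeta^2_{c}=\theta_1(c)\varphi_c$, so $\la\varphi_{c(t)},v_2(t)\ra=0$ identically. For the $v_1$ part, $\la\varphi_{c(t)},v_1(t)\ra$ is not small a priori, but it is exactly the quantity whose time derivative was computed in the modulation section (see \eqref{eq:v1normal}); moreover $|\la\varphi_{c(t)},v_1(t)\ra|\lesssim \|\varphi_{c(t)}\|_{L^2_{-a}}\|v_1(t)\|_W\le C\|v_1(t)\|_W$. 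At $t=0$ this is $O(\|v_0\|_{L^2})$ because $v_1(0)=v_0(\cdot+x(0))$. To bound it for all $t$ I would note $\|v_1(t)\|_W\le \|v_1(t)\|_{L^2}=\|\tv_1(t)\|_{L^2}=\|v_0\|_{L^2}$ by the $L^2$-conservation law for the KdV/mKdV equation \eqref{eq:gKdV} applied to $\tv_1$ (for $p=3$ one also has $\|\tv_1(t)\|_{L^2}=\|v_0\|_{L^2}$, and the $H^1$-norm is not needed here). Thus $|\la\varphi_{c(t)},v(t)\ra|\le C\|v_0\|_{L^2}$ uniformly in $t\in[0,T]$.

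Assembling the pieces: the expansion of the conservation law gives, for each $t\in[0,T]$,
\[
\|v(t)\|_{L^2}^2\le C\bM_c(T)+C\|v_0\|_{L^2}+C\|v_0\|_{L^2}^2
\le C'(\bM_c(T)+\|v_0\|_{L^2}),
\]
using $\|v_0\|_{L^2}<\delta_3\le 1$ to absorb the quadratic term. Taking the supremum over $t\in[0,T]$ yields $\bM_v(T)=\sup_{0\le t\le T}\|v(t)\|_{L^2}^2\le C'(\bM_c(T)+\|v_0\|_{L^2})$, which is the claim. The smallness hypothesis $\|v_0\|_{L^2}+\bM_2(T)+\bM_c(T)<\delta_3$ is used to ensure $c(t)$ stays in a neighborhood of $c_0$ where $c\mapsto\|\varphi_c\|_{L^2}^2$ is Lipschitz with a uniform constant and where the decomposition and the weighted bounds on $\zeta^i_{c(t)}$, $\varphi_{c(t)}$ are valid; the mKdV case $p=3$ is identical, with $\|\tv_1(t)\|_{L^2}=\|v_0\|_{L^2}$ again supplied by $L^2$-conservation. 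The main technical care is simply verifying that no genuinely uncontrolled term of size $O(1)$ survives in the cross term — everything there is either zero by \eqref{eq:orth2}, exponentially localized against $\varphi_{c(t)}$, or bounded by the conserved $\|v_0\|_{L^2}$.
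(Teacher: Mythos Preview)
Your proof is correct and follows essentially the same approach as the paper: expand the $L^2$-conservation law for $u$, use the orthogonality condition \eqref{eq:orth2} (noting $\zeta_c^2=\theta_1(c)\varphi_c$) to reduce the cross term $\la\varphi_{c(t)},v(t)\ra$ to $\la\varphi_{c(t)},v_1(t)\ra$, and bound the latter by $\|v_1(t)\|_{L^2}=\|v_0\|_{L^2}$ via the $L^2$-conservation law for $\tv_1$. The only cosmetic difference is that the paper writes the cross-term reduction in one line as \eqref{eq:L2conserve1} rather than first splitting $v=v_1+v_2$.
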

\begin{proof}
Since $v_1(t,x-x(t))$ is a solution of \eqref{eq:gKdV} satisfying
$v_1(0,x)=v_0(x)$,
\begin{equation}
  \label{eq:L2v1}
  \|v_1(t)\|_{L^2}=\|v_0\|_{L^2}\,,
\end{equation}
as long as the decomposition \eqref{eq:decomp2} exists.

Let $u(t)$ be a solution of \eqref{eq:gKdV} satisfying $u(0)=\varphi_{c_0}+v_0$.
By the $L^2$-conservation law,
\begin{equation}
  \label{eq:L2conserve}
\|u(t)\|_{L^2}^2=\|\varphi_{c_0}+v_0\|_{L^2}^2
=\|\varphi_{c_0}\|_{L^2}^2+O(\|v_0\|_{L^2})\,.  
\end{equation}
Substituting \eqref{eq:decomp1} into the left hand side,  we have
\begin{equation}
\label{eq:L2expand}
\|u(t)\|_{L^2}^2=
\|\varphi_{c(t)}\|_{L^2}^2+2\int_\R\varphi_{c(t)}(y)v(t,y)\,dy+\|v(t)\|_{L^2}^2\,.
\end{equation}
By the orthogonality condition \eqref{eq:orth2}, 
\begin{equation}
  \label{eq:L2conserve1}
\int_\R\varphi_{c(t)}(y)v(t,y)\,dy=\int_\R\varphi_{c(t)}(y)v_1(t,y)\,dy\,.  
\end{equation}
Combining \eqref{eq:L2v1}--\eqref{eq:L2conserve1}, we obtain
\begin{align*}
\|v(t)\|_{L^2}^2\le & \left|\|\varphi_{c(t)}\|_{L^2}^2-\|\varphi_{c_0}\|_{L^2(\R)}^2
\right|+O(\|v_0\|_{L^2(\R)})  
\\ \lesssim & |c(t)-c_0|+\|v_0\|_{L^2}\,.
\end{align*}
Thus we complete the proof.
\end{proof}

\section{The virial estimate of $v_1$}
\label{sec:virial}
In this section, we will show that $\|v_1(t)\|_{W_1}$
is square integrable in time by using the virial identity.
\begin{lmm}
  \label{lem:v1-a}
Suppose $p=2$ and $v_0\in L^2(\R)$.
There exist positive constants $C$ and $\delta_4$ such that
if $\bM_2(T)+\bM_c(T)+\bM_x(T)+\|v_0\|_{L^2}<\delta_4$,
then $\bM_1(T)\le C\|v_0\|_{L^2}$.
\end{lmm}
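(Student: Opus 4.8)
The plan is to run a virial (Kato-type) argument on $v_1$, which solves the KdV equation written in the moving frame \eqref{eq:v1}, namely $\pd_tv_1-\dot{x}(t)\pd_yv_1+\pd_y^3v_1+\pd_yf(v_1)=0$ with $f(v_1)=3v_1^2$. Since $v_1(t,y)=\tv_1(t,x)$ is just a translate of a genuine $L^2$-solution of \eqref{eq:gKdV} with datum $v_0$, the conservation law already gives $\|v_1(t)\|_{L^2}=\|v_0\|_{L^2}$ for all $t$, so the only thing to prove is the spacetime bound $\|v_1\|_{L^2(0,T;W_1)}\lesssim \|v_0\|_{L^2}$. For this I would test the equation \eqref{eq:v1} against $\psi(y)v_1$, where $\psi$ is a bounded, increasing weight with $\psi'(y)\asymp e^{-2a|y|}$ (the standard choice is $\psi(y)=\tanh(2ay)$ or a smoothed sign function, so that $\psi'$ is comparable to the weight $e^{-2a|y|}$ appearing in $\|\cdot\|_W$ and $\psi''',\psi''$ are controlled pointwise by $\psi'$).

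The key computation: multiplying \eqref{eq:v1} by $\psi v_1$ and integrating by parts produces
\begin{equation*}
\frac{d}{dt}\int_\R \psi v_1^2\,dy
+3\int_\R \psi'(\pd_yv_1)^2\,dy
=\int_\R\Big(\psi''' -\dot{x}(t)\psi'\Big)v_1^2\,dy
+2\int_\R \psi' f(v_1)v_1\,dy\,,
\end{equation*}
up to constants, where the crucial sign is that the coefficient $3\psi'$ of $(\pd_yv_1)^2$ is positive. Because $\dot{x}(t)$ stays close to $c_0>0$ by the smallness hypothesis $\bM_c(T)+\bM_x(T)<\delta_4$, the term $-\dot{x}(t)\psi'v_1^2$ is essentially $-c_0\psi'v_1^2$, which is \emph{negative} and can be absorbed onto the left-hand side together with a fraction of $3\psi'(\pd_yv_1)^2$, after controlling $\psi'''v_1^2\lesssim a^2\psi'v_1^2$ by choosing $a$ small. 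The cubic term $\int\psi' v_1^3\,dy$ is the one place the $L^2$-smallness of $v_0$ enters: one bounds it by $\|v_1\|_{L^\infty}\int\psi'v_1^2$ and then uses the weighted Gagliardo–Nirenberg inequality $\|w\|_{L^\infty_a}\lesssim\|w\|_{L^2_a}^{1/2}\|w\|_{H^1_a}^{1/2}$ (the inequality \eqref{eq:w2} quoted in the text), or more precisely a localized version giving $\|v_1\|_{L^\infty}\lesssim\|v_0\|_{L^2}^{1/2}\|v_1\|_{W_1}^{1/2}$; since $\|v_0\|_{L^2}<\delta_4$ is small, this cubic contribution is absorbed by the good term $\int\psi'(\pd_yv_1)^2$ after a Young's-inequality split.

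Integrating in time from $0$ to $T$ and using $\int_\R\psi(y)v_1^2(T,y)\,dy\ge -\|\psi\|_{L^\infty}\|v_0\|_{L^2}^2$ and $\int_\R\psi(y)v_1^2(0,y)\,dy\le\|\psi\|_{L^\infty}\|v_0\|_{L^2}^2$ then yields $\int_0^T\!\int_\R\psi'(\pd_yv_1)^2+\int_0^T\!\int_\R\psi' v_1^2\lesssim\|v_0\|_{L^2}^2$, i.e. $\|v_1\|_{L^2(0,T;W_1)}^2\lesssim\|v_0\|_{L^2}^2$ with a constant independent of $T$. Combined with $\sup_t\|v_1(t)\|_{L^2}=\|v_0\|_{L^2}$ this is exactly $\bM_1(T)\le C\|v_0\|_{L^2}$. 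As usual the computation above is only formal for $v_0\in L^2$; I would first justify it for $v_0\in H^3\cap L^2_a$ (where the KdV flow is smooth and $v_1$ decays, so all integrations by parts are legitimate — exactly as in the proof of Lemma~\ref{lem:difu-v1}), obtain the bound with a constant depending only on $c_0$ and $\|v_0\|_{L^2}$, and then pass to general $v_0\in L^2(\R)$ by approximation, using $L^2$-wellposedness of KdV (Bourgain) for the convergence $v_{1,n}\to v_1$ in $C([0,T];L^2)$ and lower semicontinuity of the weighted spacetime norm under weak limits. The main obstacle is the cubic term: it must be shown to be genuinely subcritical, i.e. that the smallness of $\|v_0\|_{L^2}$ alone (not $\|v_0\|_{H^1}$) suffices to absorb it — this is where the $p=2$ structure is essential and where one has to be careful that the Gagliardo–Nirenberg estimate used is the weighted/localized one matched to $\psi'$, so that no uncontrolled power of $\|v_1\|_{H^1}$ appears.
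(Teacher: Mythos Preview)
Your approach is exactly the paper's: the virial computation you describe is packaged as Lemma~\ref{lem:virial} (with weight $\chi_\eps(y)=1+\tanh\eps y$, so $\chi_\eps'\asymp e^{-2\eps|y|}$), and Lemma~\ref{lem:v1-a} is then deduced in one line from it together with $L^2$-conservation, taking $\tilde{x}(t)=x(t)$. The one place your sketch is imprecise is the cubic term: the inequality $\|v_1\|_{L^\infty}\lesssim\|v_0\|_{L^2}^{1/2}\|v_1\|_{W_1}^{1/2}$ is false for $L^2$ data, but the ``weighted/localized'' version you correctly anticipate at the end is exactly the paper's Lemma~\ref{lem:weight}, which bounds $\sup_y\chi_\eps'(y)v_1(y)^2$ (not $\|v_1\|_{L^\infty}$) by $\int\chi_\eps'\bigl((\pd_yv_1)^2+v_1^2\bigr)$ and hence gives $\bigl|\int\chi_\eps' v_1^3\bigr|\lesssim\|v_1\|_{L^2}\int\chi_\eps'\bigl((\pd_yv_1)^2+v_1^2\bigr)$ directly.
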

\begin{lmm}
  \label{lem:v1-b}
Suppose $p=3$ and $v_0\in H^1(\R)$.
There exist positive constants $C$ and $\delta_4$ such that
if $\bM_1(T)+\bM_2(T)+\bM_c(T)+\bM_\gamma(T)+\|v_0\|_{L^2}<\delta_4$,
then $\bM_1(T)\le C\|v_0\|_{L^2}$.
\end{lmm}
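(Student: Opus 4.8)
The plan is to run the Kato-type virial (local smoothing) argument for $v_1$ in the frame $y=x-x(t)$ moving with the soliton, in complete parallel with the $p=2$ case treated in Lemma~\ref{lem:v1-a}; the only genuinely new ingredient is the way the cubic nonlinearity is controlled under the present, very weak, smallness hypothesis. Since $v_1(t,y)=\tv_1(t,y+x(t))$ with $\tv_1$ a solution of \eqref{eq:gKdV}, mass is conserved, so $\|v_1(t)\|_{L^2}=\|v_0\|_{L^2}$ for every $t$, which already disposes of the $\sup_{t\in[0,T]}\|v_1(t)\|_{L^2}$ part of $\bM_1(T)$; it remains to bound $\|v_1\|_{L^2(0,T;W_1)}$.

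For that I would fix a smooth, bounded, increasing weight $\phi$ with $\phi'(y)\asymp e^{-2a|y|}$ and $|\phi'''(y)|\le 4a^2\phi'(y)$ (for instance $\phi(y)=1+\tanh(ay)$, so $\phi'(y)=a\sech^2(ay)$), multiply \eqref{eq:v1} (with $f(u)=3u^3$) by $2\phi v_1$ and integrate by parts, obtaining
\begin{equation*}
\frac{d}{dt}\int_\R\phi v_1^2\,dy+\dot x(t)\int_\R\phi' v_1^2\,dy+3\int_\R\phi'(\pd_yv_1)^2\,dy=\int_\R\phi''' v_1^2\,dy+\frac92\int_\R\phi' v_1^4\,dy.
\end{equation*}
Since $v_0$ is only assumed in $H^1(\R)$, this identity is first derived for $v_0\in H^3(\R)\cap L^2_a$ and then extended by the density argument used in the proof of Lemma~\ref{lem:difu-v1}. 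Integrating over $[0,T]$ and dropping the nonnegative boundary term $\int_\R\phi v_1^2(T)\,dy$ bounds $\int_0^T\!\int_\R(\dot x(t)\phi'v_1^2+3\phi'(\pd_yv_1)^2)\,dy\,dt$ from above by $\|\phi\|_{L^\infty}\|v_0\|_{L^2}^2+\int_0^T\!\int_\R(|\phi'''|v_1^2+\tfrac92\phi'v_1^4)\,dy\,dt$.

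It then remains (a) to see that the left side is coercive, $\gtrsim\int_0^T\|v_1(t)\|_{W_1}^2\,dt$, and (b) that the two terms on the right are lower order. For (a), by Lemma~\ref{lem:modulation-3} one writes $\dot x(t)=c_0+r_1(t)+r_2(t)$ with $r_1(t)=(c(t)-c_0)+(\dot\gamma(t)-c(t))$ and $r_2(t)=\dot x(t)-\dot\gamma(t)$; here $|r_1(t)|\lesssim\delta_4$ pointwise (from $\bM_c(T)<\delta_4$, $\|v_1(t)\|_W\le\|v_0\|_{L^2}<\delta_4$, $\sup_t\|v_2(t)\|_{L^2_a}<\delta_4$ together with \eqref{eq:dg-c}), while $r_2$ need not be pointwise small but $\int_0^T|r_2(t)|\,dt\lesssim\delta_4^2$ by \eqref{eq:dx-dg} and the Cauchy--Schwarz inequality in $t$. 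Since $c_0>\sigma>0$, $a<\sqrt{c_0}/2$, and $\int_\R\phi'v_1^2\,dy\asymp\|v_1(t)\|_W^2\le\|v_0\|_{L^2}^2$ pointwise, for $\delta_4$ small one obtains $\int_0^T\!\int_\R\dot x(t)\phi'v_1^2\,dy\,dt\ge(c_0-C\delta_4)\int_0^T\!\int_\R\phi'v_1^2\,dy\,dt-C\|v_0\|_{L^2}^2$, and, combined with the $3\int_0^T\!\int_\R\phi'(\pd_yv_1)^2$ term and $\phi'\asymp e^{-2a|y|}$, the left side is $\gtrsim\int_0^T\|v_1(t)\|_{W_1}^2\,dt-C\|v_0\|_{L^2}^2$. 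For (b), $\int_0^T\!\int_\R|\phi'''|v_1^2\le 4a^2\int_0^T\!\int_\R\phi'v_1^2$ is absorbed into the $\dot x\int_\R\phi'v_1^2$ term because $4a^2<c_0$ and $\delta_4$ is small; and the quartic term is handled by the weighted Gagliardo--Nirenberg inequality $\int_\R e^{-2a|y|}w^4\,dy\lesssim\|w\|_{L^2}^2\|w\|_W\|w\|_{W_1}$ --- an instance of the weighted Sobolev estimates of Section~\ref{sec:ap1}, cf.\ Lemma~\ref{cl:winfty} and \eqref{eq:w2} --- which together with $\|v_1(t)\|_{L^2}=\|v_0\|_{L^2}$ gives $\int_0^T\!\int_\R\phi'v_1^4\,dy\,dt\lesssim\|v_0\|_{L^2}^2\int_0^T\|v_1(t)\|_{W_1}^2\,dt$, absorbed for $\|v_0\|_{L^2}<\delta_4$ small. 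Collecting everything yields $\int_0^T\|v_1(t)\|_{W_1}^2\,dt\lesssim\|v_0\|_{L^2}^2$, and with $\sup_t\|v_1(t)\|_{L^2}=\|v_0\|_{L^2}$ this is the asserted bound $\bM_1(T)\le C\|v_0\|_{L^2}$.

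The main obstacle, and the only real departure from the $p=2$ argument, is the quartic term: the hypothesis makes $\|v_0\|_{L^2}$ (and the integrated quantities) small but leaves $\|v_0\|_{H^1}$ --- hence $\|v_1(t)\|_{L^\infty}$ --- possibly large, so the crude bound $\int\phi'v_1^4\le\|v_1\|_{L^\infty}^2\|v_1\|_W^2$ is useless; instead one must pay for the two ``extra'' powers of $v_1$ with the conserved small mass $\|v_0\|_{L^2}$ and with a factor $\|v_1\|_{W_1}$ that is then reabsorbed on the left. This is precisely the mechanism behind the slight improvement of the $H^1$-stability of mKdV announced in the abstract, namely smallness of $\|v_0\|_{L^2}^{3/4}\|v_0\|_{H^1}^{1/4}$ rather than of $\|v_0\|_{H^1}$. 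The only other, purely technical, point is the justification of the virial identity for merely $H^1$ data, handled as in the proof of Lemma~\ref{lem:difu-v1}.
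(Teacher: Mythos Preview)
Your argument is correct and is essentially the paper's: both use the virial identity with weight $1+\tanh$, control the quartic term via Lemma~\ref{lem:weight} (needing only $\|v_0\|_{L^2}$ small, not $\|v_0\|_{H^1}$), and bring in the auxiliary trajectory $\gamma(t)$ precisely to compensate for the lack of pointwise smallness of $\dot x-c$ in the $p=3$ case. The only organizational difference is that the paper applies Lemma~\ref{lem:virial} with $\tilde x=\gamma$ (so $\dot\gamma$ is pointwise bounded below) and then transfers to the $x(t)$-centered weight via the uniform bound $|x(t)-\gamma(t)|=\bigl|\int_0^t(\dot x-\dot\gamma)\bigr|\lesssim\bM_1(T)^2+\bM_2(T)^2$, whereas you stay in the $x(t)$-frame and absorb the very same integral as your $\int_0^T|r_2|\,dt$ term; the two are equivalent.
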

Let us recall the virial identity for the KdV equation
which ensures that $v_1(t)\in L^2(\R_+;W_1)$.
Let $\chi_\eps(x)=1+\tanh\eps x$, $\tilde{x}(t)$ be a $C^1$ function and 
$$I_{x_0}(t)=\int_\R\chi_\eps(x-\tilde{x}(t)-x_0)\tv_1(t,x)^2\,dx\,.$$
Then we have the following.
\begin{lmm}
  \label{lem:virial}
Suppose $v_0\in L^2(\R)$ if $p=2$ and $v_0\in H^1(\R)$ if $p=3$.
For any $c_1>0$, there exist positive constants $\eps_0$ and $\delta$
such that if $\inf_t\tilde{x}'(t)\ge c_1$, $\eps\in(0,\eps_0)$ and 
$\|v_0\|_{L^2}<\delta$, then for any $x_0\in\R$,
$$I_{x_0}(t)+\nu
\int_0^t\int_\R\chi_\eps'(x-\tilde{x}(s)-x_0)\{(\pd_x\tv_1)^2+\tv_1^2\}(s,x)\,dxds
\le I_{x_0}(0)\,,$$
where $\nu=\frac{1}{2}\min\{3,c_1\}$.
\end{lmm}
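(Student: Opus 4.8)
The plan is to compute the time derivative of $I_{x_0}(t)$ along the flow \eqref{eq:gKdV}, using the equation for $\tv_1$, and show that the ``bad'' terms coming from the cubic (or quartic, for $p=3$) nonlinearity are absorbed by the good term $\nu\int_\R\chi_\eps'\{(\pd_x\tv_1)^2+\tv_1^2\}$ produced by the dispersive part and the moving weight, provided $\|v_0\|_{L^2}$ is small and $\eps$ is chosen appropriately. First I would assume $v_0$ is smooth (say $v_0\in H^3(\R)$, or $H^3\cap L^2_a$) so that all integrations by parts are legitimate, derive the differential inequality in that setting, integrate it on $[0,t]$, and then recover the general case $v_0\in L^2$ ($p=2$) or $v_0\in H^1$ ($p=3$) by the density/weak-limit approximation argument already used in the proof of Lemma~\ref{lem:difu-v1}.

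The main computation: writing $\psi(t,x)=\chi_\eps(x-\tilde x(t)-x_0)$, one has $\pd_t\psi=-\tilde x'(t)\chi_\eps'$ and $\pd_x^j\psi=\eps^{j-1}\chi_\eps^{(j)}(\eps\cdot)$ with $|\chi_\eps^{(j)}|\lesssim\chi_\eps'$. Then
\begin{align*}
\frac{d}{dt}I_{x_0}(t)
=&-\tilde x'(t)\int_\R\chi_\eps'\,\tv_1^2\,dx
-3\int_\R\psi_x\,(\pd_x\tv_1)^2\,dx
+\int_\R\psi_{xxx}\,\tv_1^2\,dx\\
&+\frac{6p}{p+1}\int_\R\psi_x\,\tv_1^{\,p+1}\,dx.
\end{align*}
The first two terms on the right, together with the dispersive smoothing, give $-\int_\R\chi_\eps'\{\tilde x'(t)\tv_1^2+3(\pd_x\tv_1)^2\}\,dx\le-\min\{3,c_1\}\int_\R\chi_\eps'\{(\pd_x\tv_1)^2+\tv_1^2\}\,dx$; the third term is $O(\eps^2)\int_\R\chi_\eps'\tv_1^2\,dx$, harmless for $\eps$ small. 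The nonlinear term is the crux. For $p=2$ one bounds $|\int_\R\psi_x\tv_1^3|\lesssim\eps\|\tv_1\|_{L^2}\int_\R\chi_\eps'(\pd_x\tv_1+\tv_1)^2\,dx$ via a localized Gagliardo--Nirenberg inequality (with weight $\chi_\eps'$), so smallness of $\|v_0\|_{L^2}=\|\tv_1(t)\|_{L^2}$ makes it a fraction of the good term. For $p=3$ one gets $|\int_\R\psi_x\tv_1^4|\lesssim\|\tv_1\|_{L^2}^2\int_\R\chi_\eps'(\pd_x\tv_1)^2\,dx$ (or similar), again absorbed when $\|v_0\|_{L^2}$ is small. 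Collecting terms yields $\frac{d}{dt}I_{x_0}(t)+\nu\int_\R\chi_\eps'\{(\pd_x\tv_1)^2+\tv_1^2\}\,dx\le0$ with $\nu=\tfrac12\min\{3,c_1\}$, and integrating in time gives the claim.

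The step I expect to be the main obstacle is making the localized Gagliardo--Nirenberg/Sobolev estimate precise: I need a pointwise bound of the form $\|\tv_1\|_{L^\infty(\operatorname{supp}\chi_\eps')}$-type control, or more accurately a weighted interpolation inequality $\int\chi_\eps'\,\tv_1^{\,p+1}\lesssim(\text{power of }\|\tv_1\|_{L^2(\R)})\cdot\int\chi_\eps'\{(\pd_x\tv_1)^2+\tv_1^2\}$, and one has to be careful that the implicit constant does not blow up as $\eps\to0$ (it is fine precisely because $\chi_\eps'$ decays on scale $1/\eps$ and $\chi_\eps''/\chi_\eps'=O(\eps)$, so the weight is slowly varying). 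A secondary technical point is the justification of the formal computation for merely $L^2$ (resp.\ $H^1$) data: as in Lemma~\ref{lem:difu-v1} one replaces $e^{2ax}$-type weights there by the bounded, slowly varying $\chi_\eps$ here, runs the estimate for smooth approximating data, and passes to the limit using continuity of the $L^2$ flow (Bourgain) and weak lower semicontinuity of the weighted norms, noting that the constants $\eps_0,\delta,\nu$ depend only on $c_1$ and not on the regularization.
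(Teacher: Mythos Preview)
Your proposal is correct and follows essentially the same route as the paper: derive the virial identity \eqref{eq:vir1} for smooth solutions, control the nonlinear term $\int \chi_\eps'\,\tv_1^{\,p+1}$ by a weighted interpolation inequality together with $L^2$-smallness of $\tv_1$, absorb the $\chi_\eps'''$ contribution using $|\chi_\eps'''|\le 4\eps^2\chi_\eps'$, integrate in $t$, and pass to general data by approximation and well-posedness. The weighted interpolation inequality you flag as the ``main obstacle'' is exactly Lemma~\ref{lem:weight} in the Appendix, whose constant is $(1+2\eps)^{(p-1)/2}$ and hence uniformly bounded as $\eps\to0$; note, however, that for $p=2$ the bound is $\bigl|\int\chi_\eps'\tv_1^3\bigr|\le (1+2\eps)^{1/2}\|\tv_1\|_{L^2}\int\chi_\eps'\{(\pd_x\tv_1)^2+\tv_1^2\}$ with no extra factor of $\eps$, so the absorption genuinely requires $\|v_0\|_{L^2}$ small (as you in fact use), not merely $\eps$ small.
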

\begin{proof}[Proof of Lemmas~\ref{lem:v1-a} and \ref{lem:v1-b}]
Lemma~\ref{lem:v1-a} is an immediate consequence of 
the $L^2$-conservation law \eqref{eq:L2v1}
Lemma~\ref{lem:virial} with $\tilde{x}(t)=x(t)$ and $x_0=0$.
\par
To prove Lemma~\ref{lem:v1-b}, we apply Lemma~\ref{lem:virial}
with $\tilde{x}(t)=\gamma(t)$ and $x_0=0$.
Then 
$$\int_0^t\int_\R\chi_\eps'(y+h(s))v_1(t,y)\,dyds\lesssim \|v_0\|_{L^2}^2\,,$$
where $h(t)=x(t)-\gamma(t)$.
By Lemma~\ref{lem:modulation-3},
\begin{equation}
  \label{eq:h(t)}
|h(t)|\le \int_0^t|\dot{x}(t)-\dot{\gamma}(t)|\,dt\lesssim
\bM_1(T)^2+\bM_2(T)^2\,,  
\end{equation}
and there exists a positive constant $\mu$ depending only on $\delta_4$
such that $\chi_\eps'(y)\le \mu\chi_\eps'(y+h(t))$
for every $y\in\R$ and $t\in[0,T]$.
Thus we complete the proof.
\end{proof}

\begin{proof}[Proof of Lemma~\ref{lem:virial}]
Suppose that $\tv_1(t)$ is a smooth solution of \eqref{eq:gKdV}.
Then
\begin{equation}
  \label{eq:vir1}
  \begin{split}
& I_{x_0}'(t)
+\int_\R \chi_\eps'(x-\tilde{x}(t)-x_0)
\left\{3(\pd_x\tv_1)^2+\tilde{x}'(t)\tv_1^2
-\frac{6p}{p+1}\tv_1^{p+1}\right\}(t,x)\,dx
\\ =& \int_\R \chi_\eps'''(x-\tilde{x}(t)-x_0)\tv_1(t,x)^2\,dx\,.    
  \end{split}
\end{equation}
By the definition of $\chi_\eps$,
\begin{equation}
  \label{eq:ws0}
 0<\chi_\eps'(x)<2\eps\chi_\eps(x)\,,\enskip
|\chi_\eps''(x)|\le2\eps\chi_\eps'(x)\,,\enskip
|\chi_\eps'''(x)|\le4\eps^2\chi_\eps'(x)\quad\text{for $\forall x\in\R$.}
\end{equation}
Integrating \eqref{eq:vir1} over $[0,t]$ and using
Lemma~\ref{lem:weight}, \eqref{eq:L2v1} and \eqref{eq:ws0} to
the resulting equation, we obtain
\begin{equation}
  \label{eq:vir2}
  \begin{split}
& I_{x_0}(t)+\nu\int_0^t \int_\R \chi_\eps'(x-\tilde{x}(s)-x_0)
\left((\pd_x\tv_1)^2+\tv_1^2\right)(s,x)\,dxds \ \le I_{x_0}(0)
  \end{split}
\end{equation}
provided $\eps$ and $\delta_4$ are sufficiently small.
Since \eqref{eq:gKdV} is well-posed in $L^2(\R)$ if $p=2$ and in $H^1(\R)$
if $p=3$, we can verify \eqref{eq:vir2}
for any $v_0$ satisfying the assumption of Lemma~\ref{lem:virial}.
\end{proof}
\begin{crl}
  \label{cor:vir1}
Under the conditions of Lemma~\ref{lem:virial}, if there exists a positive
constant $\sigma$ such that $\inf_{t\ge0}\tilde{x}'(t)\ge c_1+\sigma$, then
$$\int_\R\chi_\eps(x-\tilde{x}(t))\tv_1(t,x)^2\,dx
\le \int_\R\chi_\eps(x-\tilde{x}(0)-\sigma t)v_0(x)^2\,dx\to0
\quad\text{as $t\to\infty$.}$$
\end{crl}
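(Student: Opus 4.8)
The plan is to deduce Corollary~\ref{cor:vir1} from Lemma~\ref{lem:virial} by applying the latter not to the given center $\tilde{x}$ but to a slightly retarded comparison center, and then to conclude by dominated convergence. The whole point is that the hypothesis $\inf_{t\ge0}\tilde{x}'(t)\ge c_1+\sigma$ leaves exactly the slack $\sigma t$ needed to move the spatial window at the slower speed $\sigma$ while still applying the monotonicity estimate, whose smallness constants depend only on $c_1$.

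Concretely, I would fix $t>0$ and set $\hat{x}(s)=\tilde{x}(s)+\sigma(t-s)$ for $s\ge0$. Then $\hat{x}\in C^1$, $\hat{x}'(s)=\tilde{x}'(s)-\sigma\ge(c_1+\sigma)-\sigma=c_1$ for all $s\ge0$, and $\hat{x}(0)=\tilde{x}(0)+\sigma t$, $\hat{x}(t)=\tilde{x}(t)$. Now I apply Lemma~\ref{lem:virial} with $\hat{x}$ in place of $\tilde{x}$ and $x_0=0$. The only place any care is needed is this: the constants $\eps_0$ and $\delta$ produced by Lemma~\ref{lem:virial} are fixed once $c_1$ (and $p$) is given, before the center is quantified, so they are unchanged when $\tilde{x}$ is replaced by $\hat{x}$; since we are under the hypotheses of Lemma~\ref{lem:virial} (in particular $\eps\in(0,\eps_0)$ and $\|v_0\|_{L^2}<\delta$ for these same constants) and $\inf_{s\ge0}\hat{x}'(s)\ge c_1$, the lemma applies verbatim. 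Discarding the nonnegative spacetime integral in its conclusion gives
$$\int_\R\chi_\eps(x-\hat{x}(t))\tv_1(t,x)^2\,dx\le\int_\R\chi_\eps(x-\hat{x}(0))\tv_1(0,x)^2\,dx,$$
and substituting $\hat{x}(t)=\tilde{x}(t)$, $\hat{x}(0)=\tilde{x}(0)+\sigma t$ and $\tv_1(0,\cdot)=v_0$ yields precisely
$$\int_\R\chi_\eps(x-\tilde{x}(t))\tv_1(t,x)^2\,dx\le\int_\R\chi_\eps(x-\tilde{x}(0)-\sigma t)v_0(x)^2\,dx.$$

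Finally, I would let $t\to\infty$ in the right-hand side. Since $\chi_\eps(z)=1+\tanh(\eps z)\to0$ as $z\to-\infty$, the integrand $\chi_\eps(x-\tilde{x}(0)-\sigma t)v_0(x)^2$ tends to $0$ pointwise in $x$ and is bounded above by $2v_0(x)^2\in L^1(\R)$ uniformly in $t\ge0$, so by the dominated convergence theorem the right-hand side tends to $0$. I do not expect a genuine obstacle here: the argument is a one-line change of comparison center plus a routine dominated-convergence limit, and the substantive work has already been carried out in the virial estimate of Lemma~\ref{lem:virial}; the only thing to be explicit about is that the constants there are insensitive to the choice of center once $c_1$ is fixed.
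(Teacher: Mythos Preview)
Your proof is correct and follows exactly the same approach as the paper: define a retarded comparison center $\hat{x}(s)=\tilde{x}(s)+\sigma(t-s)$ (the paper writes $\tilde{x}_1(t)=\tilde{x}(t)-\sigma(t-t_1)$, which is the same function), apply Lemma~\ref{lem:virial} to it with $x_0=0$, and read off the inequality at the terminal time. Your write-up is in fact slightly more complete than the paper's, since you make explicit that the constants $\eps_0,\delta$ depend only on $c_1$ and you justify the limit by dominated convergence.
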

\begin{proof}
  Let $t_1>0$ and $\tilde{x}_1(t)=\tilde{x}(t)-\sigma(t-t_1)$. Then 
$\tilde{x}_1(t_1)=\tilde{x}(t_1)$ and $\tilde{x}_1'(t)\ge c_1$
for every $t\ge0$. Using $\tilde{x}_1(t)$ in place of $\tilde{x}(t)$
in Lemma~\ref{lem:virial}, we have
\begin{align*}
\int_\R\chi_\eps(x-\tilde{x}_1(t_1))\tv_1(t,x)^2\,dx
\le & \int_\R\chi_\eps(x-\tilde{x}_1(0))\tv_1(0,x)^2\,dx
\\ =& \int_\R\chi_\eps(x-\tilde{x}(0)-\sigma t_1)v_0(x)^2\,dx\,.
\end{align*}
Thus we complete the proof.
\end{proof}

\section{The weighted estimate of $v_2$}
\label{sec:v2}
In this section, we will estimate $\|v_2(t)\|_{L^2_a}$ by using the
exponential stability property of the linearized operator as in
\cite{M1,MPQ,PW}. Thanks to the parabolic smoothing effect of
$e^{t\pd_x^3}$ on $L^2_a$, we do not need re-centering argument as in
\cite{MPQ} which is used to avoid a derivative loss caused by the term
$(\dot{x}-c)\pd_yv$.

\begin{lmm}
  \label{lem:v2est-a} Let $p=2$.
There exist positive constants $C$ and  $\delta_5$ such that if
$\|v_0\|_{L^2}+\bM_{tot}(T)\le \delta_5$, then $\bM_{tot}(T)\le C\|v_0\|_{L^2}$.
\end{lmm}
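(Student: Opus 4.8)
The plan is to close a bootstrap estimate for $\bM_{tot}(T)$ by controlling $\bM_2(T)$ in terms of the other pieces, and then combining with the estimates already established in the previous sections. First I would recall from Lemma~\ref{lem:modulation} that $\bM_c(T)+\bM_x(T)\lesssim \bM_1(T)+\bM_2(T)^2$, from Lemma~\ref{lem:v1-a} that $\bM_1(T)\lesssim\|v_0\|_{L^2}$ (which applies once $\bM_2(T)+\bM_c(T)+\bM_x(T)+\|v_0\|_{L^2}$ is small), and from Lemma~\ref{lem:v-L2} that $\bM_v(T)\lesssim \bM_c(T)+\|v_0\|_{L^2}$. Thus the entire task reduces to bounding $\bM_2(T)=\sup_{t\in[0,T]}\|v_2(t)\|_{L^2_a}+\|v_2\|_{L^2(0,T;L^2_a)}$. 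Since $v_2$ satisfies the orthogonality conditions \eqref{eq:orth1}--\eqref{eq:orth2}, applying $Q_{c(t)}$ to \eqref{eq:v2} and using a Duhamel representation relative to the frozen-coefficient semigroup $e^{-t\mL_{c_0}}$ (treating $\mL_{c(t)}-\mL_{c_0}$, the transport term $(\dot x-c)\pd_yv_2$, the modulation term $\ell(t)$, and the nonlinearity $\pd_yN$ as inhomogeneous forcing), I would estimate
$$
\|v_2(t)\|_{L^2_a}\lesssim \int_0^t e^{-b(t-s)}(t-s)^{-3/4}\bigl(\|\text{forcing}_1(s)\|_{L^1_a}+\text{other terms}\bigr)\,ds,
$$
invoking Corollary~\ref{cor:s-smoothing} for the $\pd_y$-forcing terms (gaining the $t^{-3/4}$ integrable singularity from the parabolic smoothing of $e^{-t\pd_y^3}$ on $L^2_a$) and the remark around \eqref{eq:rem-3}. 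The transport term $(\dot x-c)\pd_yv_2$ is handled, as the authors emphasize, without re-centering: the smoothing estimate \eqref{eq:smoothing-b} absorbs the derivative at the cost of a $t^{-1/2}$ singularity, and $|\dot x-c|\lesssim\bM_1+\bM_2^2$ is small.

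The key nonlinear bookkeeping is that $N=N_1+N_2$ with $N_1=6\varphi_{c}v_1$ exponentially localized and linear in $v_1$, so $\|N_1\|_{L^1_a}\lesssim\|v_1\|_W$, while $N_2=6v_1v_2+3v_2^2$ gives $\|N_2\|_{L^1_a}\lesssim \|v_1\|_W\|v_2\|_{L^2_a}+\|v_2\|_{L^2_a}^2$ (using that the weight $e^{ax}$ is controlled by $e^{-a|x|}$ on the support overlap, or splitting the integral). The term $\ell(t)=\dot c\,\pd_c\varphi_c-(\dot x-c)\pd_y\varphi_c$ is exponentially localized with $\|\ell(t)\|_{L^1_a}\lesssim|\dot c|+|\dot x-c|\lesssim\|v_1(t)\|_W+\bM_2(T)(\|v_1\|_W+\|v_2\|_{L^2_a})$ by \eqref{eq:x,c-est}; note $\ell$ is \emph{not} in $\Range Q_c$ in general, but $P_c\ell$ can be absorbed because $\frac{d}{dt}\la v_2,\zeta_c^i\ra=0$ makes the $P_c$-component of the right side of \eqref{eq:v2} vanish after projection — this is exactly the content of the modulation equations. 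For the space-time norm $\|v_2\|_{L^2(0,T;L^2_a)}$ I would use Proposition~\ref{prop:l-smoothing} (the maximal-regularity/local-smoothing estimate $\|Ag\|_{L^2(\R_+;H^2_a)}\lesssim\|g\|_{L^2(\R_+;L^2_a)}$) applied to the $L^2_a$-forcing, together with Young's inequality in time for the $L^1_a$-forcing convolved against the integrable kernel $e^{-bt}t^{-3/4}$. Crucially, $\|v_1\|_{L^2(0,T;W_1)}\le\bM_1(T)\lesssim\|v_0\|_{L^2}$ is already square-integrable in time by the virial estimate, which feeds the $N_1$ contribution.

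Collecting everything yields an estimate of the schematic form
$$
\bM_2(T)\lesssim \|v_0\|_{L^2}+\bM_{tot}(T)^2,
$$
and hence $\bM_{tot}(T)\lesssim \|v_0\|_{L^2}+\bM_{tot}(T)^2$; a standard continuity/bootstrap argument (the set of $T$ for which $\bM_{tot}(T)\le 2C\|v_0\|_{L^2}$ is open, closed, and nonempty once $\|v_0\|_{L^2}$ is small enough to keep us inside the regime where Lemmas~\ref{lem:decomp}, \ref{lem:modulation}, \ref{lem:v-L2}, \ref{lem:v1-a} apply) then gives $\bM_{tot}(T)\le C\|v_0\|_{L^2}$. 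I expect the main obstacle to be the careful treatment of the $P_c$ versus $Q_c$ decomposition of the forcing term $\ell(t)$ and of the commutator $(\mL_{c(t)}-\mL_{c_0})v_2$: one must verify that after imposing the orthogonality conditions the genuinely dangerous (non-decaying) part of the forcing is annihilated, so that only $Q_c$-projected data — to which the exponential decay of Proposition~\ref{prop:linear} and the smoothing of Corollary~\ref{cor:s-smoothing} apply — survives in the Duhamel formula, and the $c(t)$-dependence of the projections and eigenfunctions contributes only lower-order terms controlled by $\bM_c(T)$.
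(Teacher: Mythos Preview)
Your overall strategy matches the paper's proof closely: freeze the linearized operator at $c_0$, write the Duhamel formula for $Q_{c_0}v_2$, and feed in the semigroup/smoothing bounds together with the a~priori controls on $\bM_1,\bM_c,\bM_x,\bM_v$ from the earlier lemmas. Two points deserve correction.

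First, your bound $\|N_2\|_{L^1_a}\lesssim \|v_1\|_W\|v_2\|_{L^2_a}+\|v_2\|_{L^2_a}^2$ is not right: $\|v_2^2\|_{L^1_a}=\int e^{ay}v_2^2\,dy$ cannot be controlled by $\|v_2\|_{L^2_a}^2=\int e^{2ay}v_2^2\,dy$ on the half-line $y<0$, and ``splitting the integral'' leaves a piece bounded only by $\|v_2\|_{L^2}^2$, which is not square-integrable in time and would ruin the $L^2(0,T;L^2_a)$ estimate. The paper instead writes $e^{ay}|v_2|^2=(e^{ay}|v_2|)|v_2|$ and $e^{ay}|v_1v_2|=(e^{ay}|v_2|)|v_1|$ and applies Cauchy--Schwarz to obtain
\[
\|N_2(s)\|_{L^1_a}\lesssim \|v_2(s)\|_{L^2_a}\bigl(\|v_1(s)\|_{L^2}+\|v_2(s)\|_{L^2}\bigr)
\lesssim \bigl(\bM_1(T)+\bM_v(T)^{1/2}\bigr)\|v_2(s)\|_{L^2_a}.
\]
This is where the unweighted $L^2$ control of $v$ (Lemma~\ref{lem:v-L2}) genuinely enters; the factor $\|v_2(s)\|_{L^2_a}$ is then square-integrable in time, and Young's inequality against $e^{-b't}t^{-3/4}$ closes both the $L^\infty_t$ and $L^2_t$ bounds on $v_2$. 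Your final schematic $\bM_2\lesssim\|v_0\|_{L^2}+\bM_{tot}(T)^2$ is still morally what comes out, but the route through $\bM_v^{1/2}$ is essential.

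Second, your anticipated ``main obstacle'' concerning $P_c\ell$ and the annihilation of the non-decaying part does not arise. The paper simply applies the \emph{fixed} projection $Q_{c_0}$ to the entire right-hand side (including $\ell$, which is in $L^2_a$), uses Proposition~\ref{prop:linear} directly on $Q_{c_0}\ell$, and handles the mismatch between $v_2=Q_{c(t)}v_2$ and $Q_{c_0}v_2$ via $\|(Q_{c(t)}-Q_{c_0})v_2\|_{L^2_a}=O(\bM_c(T))\|v_2\|_{L^2_a}$. Likewise Proposition~\ref{prop:l-smoothing} is not invoked for $p=2$; once all forcing terms lie in $L^2(0,T)$ of the appropriate weighted space, Young's inequality in time against the integrable kernels $e^{-bt}$, $e^{-b't}t^{-1/2}$, $e^{-b't}t^{-3/4}$ suffices.
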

\begin{proof}
To begin with, we deduce a priori bounds on $\bM_1$, $\bM_c$, $\bM_x$
and $\bM_v$ in terms of $\|v_0\|_{L^2}$ and $\bM_2(T)$.
Lemma~\ref{lem:v1-a} implies
\begin{equation}
  \label{eq:M1}
\bM_1(T)\lesssim \|v_0\|_{L^2}\,.  
\end{equation}
By \eqref{eq:Mcx} and \eqref{eq:M1},
\begin{equation}
  \label{eq:Mcx'}
\bM_c(T)+\bM_x(T)\lesssim  \|v_0\|_{L^2}+\bM_2(T)^2\,,
\end{equation}
and 
\begin{equation}
  \label{eq:Mv}
\bM_v(T)\lesssim \|v_0\|_{L^2}+\bM_c(T)\lesssim\|v_0\|_{L^2}+\bM_2(T)^2
\end{equation}
follows from Lemma~\ref{lem:v-L2} and \eqref{eq:Mcx'}.
Hence it suffices to show $\bM_2(T)\lesssim \|v_0\|_{L^2}$
to prove Lemma~\ref{lem:v2est-a}.
\par
Now we will estimate $v_2$.
Eq.~\eqref{eq:v2} can be rewritten as
\begin{equation}
  \label{eq:v2'}
  \pd_tv_2+\mL_{c_0}v_2+\ell(t)+\pd_y(N(t)+\widetilde{N}(t))=0\,,
\end{equation}
where $\widetilde{N}(t)=\left(c_0-\dot{x}(t)\right)v_2
+6\left(\varphi_{c(t)}-\varphi_{c_0}\right)v_2$.
Using the variation of constants formula, we have
\begin{equation}
  \label{eq:var-const}
Q_{c_0}v_2(t)=-\int_0^t e^{-(t-s)\mL_{c_0}}Q_{c_0}
\{\ell(s)+\pd_y(N(s)+\widetilde{N}(s))\}\,ds\,.  
\end{equation}
Applying Proposition~\ref{prop:linear} and Corollary~\ref{cor:s-smoothing}
to \eqref{eq:var-const}, we have
\begin{equation}
  \label{eq:v2-p1}
  \begin{split}
\|Q_{c_0}v_2(t)\|_{L^2_a}\lesssim & \int_0^t e^{-b(t-s)}\|\ell(s)\|_{L^2_a}\,ds
\\ & +
\int_0^te^{-b'(t-s)}(t-s)^{-1/2}(\|N_1(s)\|_{L^2_a}+\|\widetilde{N}(s)\|_{L^2_a})\,ds
\\ & +\int_0^te^{-b'(t-s)}(t-s)^{-3/4}\|N_2(s)\|_{L^1_a}\,ds\,.
  \end{split}
\end{equation}
Since $Q_{c(t)}v_2(t)=v_2(t)$ and $\|Q_{c(t)}-Q_{c_0}\|_{B(L^2_a)}=O(|c(t)-c_0|)$,
$$\|v_2(t)-Q_{c_0}v_2(t)\|_{L^2_a}=O(|c(t)-c_0|)\|v_2(t)\|_{L^2_a}\,.$$
Hence for small $\delta_5$,
there exist positive constants $d_1$ and $d_2$ such that
$$d_1\|v_2(t)\|_{L^2_a}\le \|Q_{c_0}v_2(t)\|_{L^2_a}\le d_2 \|v_2(t)\|_{L^2_a}
\quad\text{for $t\in[0,T]$.}$$
\par
Let us prove
\begin{gather}
  \label{eq:N1-w}
\|N_1\|_{L^\infty(0,T;L^2_a)}+\|N_1\|_{L^2(0,T;L^2_a)}\lesssim \|v_0\|_{L^2}\,,
\\   \label{eq:N2-w}
\|N_2\|_{L^\infty(0,T;L^1_a)}+\|N_2\|_{L^2(0,T;L^1_a)}\lesssim
(\|v_0\|_{L^2}^{1/2}+\bM_2(T))\bM_2(T)\,,
\\ \label{eq:wN}
\|\widetilde{N}\|_{L^\infty(0,T;L^2_a)}+\|\widetilde{N}\|_{L^2(0,T;L^2_a)}
\lesssim (\|v_0\|_{L^2}+\bM_2(T)^2)\bM_2(T)\,,
\\   \label{eq:l-w}
\|\ell\|_{L^\infty(0,T;L^2_a)}+\|\ell\|_{L^2(0,T;L^2_a)}\lesssim
\|v_0\|_{L^2}+\bM_2(T)^2\,.
\end{gather}
If $\delta_5$ is sufficiently small, we have
$2a<\inf_{s\in[0,T]}\sqrt{c(s)}$ and
\begin{equation}
  \label{eq:N1-a}
\|N_1(s)\|_{L^2_a}\lesssim \|v_1(s)\|_W  
\end{equation}
follows from the definition of $N_1$.
Since $|N_2|\lesssim |v_2|(|v_1|+|v_2|)$ and $v_2=v-v_1$,
we have for $s\in[0,T]$, 
\begin{equation}
  \label{eq:N2-a}
\begin{split}
\|N_2(s)\|_{L^1_a}\lesssim & \|v_2(s)\|_{L^2_a}(\|v_1(s)\|_{L^2}+\|v_2(s)\|_{L^2})
\\ \lesssim &  (\bM_1(T)+\bM_v(T)^{1/2})\|v_2(s)\|_{L^2_a}\,.
\end{split}  
\end{equation}
Combining \eqref{eq:M1}--\eqref{eq:Mv} with \eqref{eq:N1-a}
and \eqref{eq:N2-a},
we obtain \eqref{eq:N1-w} and \eqref{eq:N2-w}. Moreover,
\begin{align*}
\|\widetilde{N}(s)\|_{L^2_a}\lesssim & \left(
\left|\dot{x}(s)-c(s)\right|+\left|c(s)-c_0\right|\right)\|v_2(s)\|_{L^2_a}
\\ \lesssim  & \left(\|v_0\|_{L^2}+\bM_2(T)^2\right)\|v_2(s)\|_{L^2_a}\,.
\end{align*}
By \eqref{eq:x,c-est} and the definition of $\ell$,
\begin{align*}
\|\ell(s)\|_{L^2_a}\lesssim &
\left|\dot{c}(s)\right|+\left|\dot{x}(s)-c(s)\right|
\\ \lesssim  & \|v_1(s)\|_W+\|v_2(s)\|_{L^2_a}^2
\lesssim  \|v_1(s)\|_W+\bM_2(T)\|v_2(s)\|_{L^2_a}\,.  
\end{align*}
Thus we prove \eqref{eq:N1-w}--\eqref{eq:l-w}.
Since $e^{-bt}(1+t^{-3/4})\in L^1((0,\infty))$,
it follows from Young's inequality and \eqref{eq:v2-p1}--\eqref{eq:l-w} that
\begin{align*}
\bM_2(T)=& \|v_2\|_{L^\infty(0,T;L^2_a)}+\|v_2\|_{L^2(0,T;L^2_a)}
\\ \lesssim & \|v_0\|_{L^2}+(\|v_0\|_{L^2}^{1/2}+\bM_2(T))\bM_2(T)\,.
\end{align*}
Thus we have $\bM_2(T)\lesssim \|v_0\|_{L^2}$ provided $\delta_5$ is sufficiently
small. This completes the proof of Lemma~\ref{lem:v2est-a}.
\end{proof}

\begin{lmm}
  \label{lem:v2est-b}
Let $p=3$. There exists a positive constant $\delta_5$ such that if
$\bM_{tot}(T)+\linebreak\|v_0\|_{L^2}^{3/4}\|v_0\|_{H^1}^{1/4}<\delta_5$, then
$\bM_{tot}(T)\lesssim \|v_0\|_{L^2}$.
\end{lmm}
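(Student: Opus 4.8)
The plan is to run the same bootstrap scheme as in the proof of Lemma~\ref{lem:v2est-a}, with \eqref{eq:Mcg}, Lemma~\ref{lem:v-L2} and the virial estimate of Lemma~\ref{lem:v1-b} playing the roles of their $p=2$ analogues, and with the new feature that $\bM_2(T)$ now also controls $v_2$ in $L^2(0,T;H^1_a)$. First I would record, under the smallness hypothesis, that Lemma~\ref{lem:v1-b} gives $\bM_1(T)\lesssim\|v_0\|_{L^2}$, that \eqref{eq:Mcg} gives $\bM_c(T)+\bM_\gamma(T)\lesssim\|v_0\|_{L^2}+\bM_2(T)^2$, and that Lemma~\ref{lem:v-L2} gives $\bM_v(T)\lesssim\|v_0\|_{L^2}+\bM_2(T)^2$, so that $\|v(t)\|_{L^2}$ and $\|v_2(t)\|_{L^2}$ are $\lesssim\|v_0\|_{L^2}^{1/2}+\bM_2(T)$. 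In addition I would use conservation of mass and energy for mKdV together with the Gagliardo--Nirenberg inequality to get the (merely bounded, not small) a priori control $\sup_t\|u(t)\|_{H^1}+\sup_t\|v_1(t)\|_{H^1}\lesssim 1+\|v_0\|_{H^1}$, hence $\sup_t\|v_2(t)\|_{H^1}\lesssim 1+\|v_0\|_{H^1}$. After these reductions it suffices to prove $\bM_2(T)\lesssim\|v_0\|_{L^2}$, whence $\bM_{tot}(T)\lesssim\|v_0\|_{L^2}$.

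For the weighted estimate on $v_2$ I would rewrite \eqref{eq:v2} as $\pd_tv_2+\mL_{c_0}v_2+\ell+\pd_y(N+\widetilde N)=0$, $v_2(0)=0$, with $\widetilde N=(c_0-\dot x)v_2+9(\varphi_{c(t)}^2-\varphi_{c_0}^2)v_2$, split $N=N_{11}+N_{12}+N_{21}+N_{22}$ as in \eqref{eq:defN1}--\eqref{eq:defN2}, and treat the $\sup_t\|v_2(t)\|_{L^2_a}$ part through the Duhamel formula, Proposition~\ref{prop:linear} and Corollary~\ref{cor:s-smoothing} exactly as for $p=2$: $N_{11}=9\varphi_c^2v_1$ goes into $L^2_a$ and is bounded by $\|v_1\|_W$; the quadratic pieces $N_{12},N_{21}$, being localized by $\varphi_c$, go into $L^1_a$ and are controlled using the virial bound $\|v_1\|_{L^2(0,T;W_1)}\lesssim\|v_0\|_{L^2}$; $\ell$ and $\widetilde N$ are handled as in Lemma~\ref{lem:v2est-a} via \eqref{eq:x,c-est-3}. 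For the cubic part $N_{22}=3(v^3-v_1^3)$ I would instead use Proposition~\ref{prop:l-smoothing}: since \eqref{eq:resolvent} with $k=1$ shows $R_0(\lambda)$ gains one derivative, the Fourier-in-time argument of Proposition~\ref{prop:l-smoothing} also yields $\|A(\pd_yg)\|_{L^2(\R_+;H^1_a)}\lesssim\|g\|_{L^2(\R_+;L^2_a)}$, so the cubic forcing contributes $\lesssim\|v^3-v_1^3\|_{L^2(0,T;L^2_a)}$ to $\|v_2\|_{L^2(0,T;H^1_a)}$. Writing $|v^3-v_1^3|\lesssim|v_2|(|v|^2+|v_1|^2)$ pointwise and keeping the weight on $v_2$ gives $\|v^3-v_1^3\|_{L^2(0,T;L^2_a)}\lesssim\bM_2(T)\big(\|v\|_{L^4(0,T;L^\infty)}^2+\|v_1\|_{L^4(0,T;L^\infty)}^2\big)$. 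Here the factor $\|v_1\|_{L^4(0,T;L^\infty)}$ is controlled by a Strichartz-type bound for mKdV, $\|v_1\|_{L^4(0,T;L^\infty)}\lesssim\|D_x^{1/4}v_0\|_{L^2}\lesssim\|v_0\|_{L^2}^{3/4}\|v_0\|_{H^1}^{1/4}$ (see \cite{KPV1,KPV2}), which is exactly where the hypothesis enters; for $\|v\|_{L^4(0,T;L^\infty)}$ I would run an analogous Strichartz/local-smoothing estimate for \eqref{eq:v2} itself, exploiting that $v_2(0)=0$, to bound $\|v_2\|_{L^4(0,T;L^\infty)}$ (and hence $\|v\|_{L^4(0,T;L^\infty)}$) by $\|v_0\|_{L^2}^{3/4}\|v_0\|_{H^1}^{1/4}$ up to terms absorbable into $\bM_2(T)$. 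Feeding everything into the Duhamel formula and using Young's inequality as in Lemma~\ref{lem:v2est-a} would then give $\bM_2(T)\lesssim\|v_0\|_{L^2}+\big(\|v_0\|_{L^2}^{3/4}\|v_0\|_{H^1}^{1/4}+\bM_2(T)\big)\bM_2(T)$, and a continuity/bootstrap argument finishes the proof once $\delta_5$ is small.

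The main obstacle I anticipate is precisely the cubic self-interaction of $v_2$. The weight $e^{ay}$ that makes the $L^2_a$- and $H^1_a$-estimates work provides no gain for $y<0$, i.e.\ behind the main soliton, where $\varphi_{c(t)}$ is negligible and $v_2\approx u-\tv_1$; there the only a priori control of $v_2$ in $H^1$ is bounded rather than small and, worse, is not integrable in time, so one cannot close by placing two factors of $v_2$ in $L^\infty_{t,x}$ and must instead recover time integrability from a dispersive estimate — this is the point where the scaling-critical-at-$H^{1/4}$ quantity $\|v_0\|_{L^2}^{3/4}\|v_0\|_{H^1}^{1/4}$ is forced upon us, and why the hypothesis of Theorem~\ref{thm:2} (rather than $\|v_0\|_{H^1}\ll1$) is natural. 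Carrying out this Strichartz estimate for \eqref{eq:v2}, keeping track of the powers of $\|v_0\|_{L^2}$ and $\|v_0\|_{H^1}$ so that the bootstrap closes with the stated hypothesis, and reconciling the decaying weight of $W_1$ with the growing weight of $L^2_a$ in the mixed cubic terms, is the delicate part of the argument.
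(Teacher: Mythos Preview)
Your bootstrap architecture is right: reduce to $\bM_2(T)\lesssim\|v_0\|_{L^2}$ via Lemmas~\ref{lem:modulation-3}, \ref{lem:v-L2}, \ref{lem:v1-b}, rewrite \eqref{eq:v2} against $\mL_{c_0}$, and treat $\ell$, $N_{11}$, $N_{12}$, $N_{21}$, $\widetilde N$ by Proposition~\ref{prop:linear}, Corollary~\ref{cor:s-smoothing} and Proposition~\ref{prop:l-smoothing} as you describe. The paper also does this, with one technical wrinkle you omit: it works with the shifted unknown $\tv_2(t,y)=v_2(t,y+h(t))$, $h(t)=x(t)-\gamma(t)$, so that the coefficient in $\widetilde N$ becomes $c_0-\dot\gamma$ rather than $c_0-\dot x$. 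The point is that $\sup_t|\dot\gamma-c|$ is bounded by \eqref{eq:dg-c} without any $H^1$-factor, whereas $\sup_t|\dot x-c|$ via \eqref{eq:x,c-est-3} carries $\|v\|_{L^\infty}$, which is only \emph{bounded}, not small, under the hypothesis; one can still close using \eqref{eq:dx-dg}--\eqref{eq:dg-c}, but the shift makes this transparent.

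The real divergence is in how you handle $N_{22}$. Your plan is to keep the weight on $v_2$ and put the two remaining factors in $L^4_tL^\infty_x$ via Strichartz. For $v_1$ this is plausible (small $\dot H^{1/4}$-data theory for mKdV), but for $v_2$ there is a genuine obstruction you have not addressed: in the unweighted Duhamel for $v_2$ the linear potential term $\pd_y(9\varphi_{c(t)}^2v_2)$ is a forcing that does \emph{not} decay in time, and there is no off-the-shelf global-in-$T$ Strichartz estimate for $\pd_y^3$ perturbed by a moving potential of size $O(1)$. Bounding $\varphi_c^2v_2$ in any of the usual dual Strichartz or Kato-smoothing norms either produces a quantity that grows with $T$ or reintroduces an $L^1_t$-norm of $\|v_2(t)\|_{L^2_a}$ that $\bM_2$ does not control. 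You correctly flag this as ``the delicate part'', but as written it is a gap, not a technicality.

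The paper avoids Strichartz entirely. Its device is the one-sided weighted Sobolev inequality of Lemma~\ref{cl:winfty},
\[
\|w^2\|_{L^\infty_a}\le 2\|w\|_{L^2}^{\theta}\|\pd_xw\|_{L^2}^{1-\theta}\|\pd_xw\|_{L^2_a}^{\theta}\|w\|_{L^2_a}^{1-\theta},
\]
which trades the growing weight $e^{ay}$ against \emph{unweighted} $H^1$-control of $v_2$. The latter is supplied by Lemma~\ref{lem:v-H1} (coercivity of $S''(\varphi_c)$ on the orthogonal complement plus energy conservation), giving $\|v_2(t)\|_{H^1}\lesssim\|v_0\|_{H^1}+\bM_2(T)^2$. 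Taking $\theta=5/7$ one obtains
\[
\|v_2^3\|_{L^2_a}\le\|v_2\|_{L^2}\|v_2^2\|_{L^\infty_a}
\lesssim \|v_2\|_{H^1_a}^{5/7}\|v_2\|_{L^2_a}^{2/7}\|v_2\|_{L^2}^{12/7}\|v_2\|_{H^1}^{2/7},
\]
whose $L^2_t$- and $L^{14/5}_t$-norms are bounded by $\bM_2(T)$ times a factor that is small precisely when $\|v_0\|_{L^2}^{3/4}\|v_0\|_{H^1}^{1/4}$ is small; the cross term $v_1^2v_2$ is handled by $\|v_1(t)\|_{L^4}^2\|v_2\|_{L^\infty_a}$ with the pointwise-in-time Gagliardo--Nirenberg bound $\|v_1(t)\|_{L^4}\lesssim\|v_0\|_{L^2}^{3/4}\|v_0\|_{H^1}^{1/4}$ coming from Lemma~\ref{lem:v1-H1}. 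This closes the bootstrap with no dispersive input beyond the weighted semigroup estimates of Section~\ref{sec:linear}.
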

To prove Lemma~\ref{lem:v2est-b}, we need the $H^1$-bound of
$v_1$ and $v$.
\begin{lmm}
  \label{lem:v1-H1}
Let $p=3$ and $\tv_1$ be a solution of \eqref{eq:gKdV} satisfying
$\tv_1(0)=v_0\in H^1(\R)$. Then 
\begin{equation}
  \label{eq:H1-bound1}
\|\pd_x\tv_1(t)\|_{L^2}\le C\left(\|\pd_xv_0\|_{L^2}+\|v_0\|_{L^2}^3\right)\,,
\end{equation}
where $C$ is a constant independent of $t$ and $v_0$.
\end{lmm}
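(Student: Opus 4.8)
The plan is to derive \eqref{eq:H1-bound1} as an \emph{a priori} bound from the two conservation laws of the mKdV equation ($p=3$): the momentum $\|\tv_1(t)\|_{L^2}^2$ and the Hamiltonian
\begin{equation*}
E(\tv_1(t))=\int_\R\left(\frac12(\pd_x\tv_1)^2(t,x)-\frac34\tv_1^4(t,x)\right)\,dx,
\end{equation*}
both of which are independent of $t$. The point is that in one space dimension the quartic term is energy-subcritical, so it can be absorbed into $\|\pd_x\tv_1(t)\|_{L^2}^2$.

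Concretely, I would first rewrite the conservation of $E$ as
\begin{equation*}
\|\pd_x\tv_1(t)\|_{L^2}^2=2E(v_0)+\frac32\int_\R\tv_1^4(t,x)\,dx,
\end{equation*}
then bound the quartic term by the one-dimensional Gagliardo--Nirenberg inequality $\|w\|_{L^4}^4\lesssim\|w\|_{L^2}^3\|\pd_xw\|_{L^2}$ together with the conservation $\|\tv_1(t)\|_{L^2}=\|v_0\|_{L^2}$, so that $\int_\R\tv_1^4(t,x)\,dx\lesssim\|v_0\|_{L^2}^3\|\pd_x\tv_1(t)\|_{L^2}$. An application of Young's inequality absorbs this into the left-hand side and gives $\|\pd_x\tv_1(t)\|_{L^2}^2\lesssim|E(v_0)|+\|v_0\|_{L^2}^6$. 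Estimating $|E(v_0)|\le\frac12\|\pd_xv_0\|_{L^2}^2+\frac34\|v_0\|_{L^4}^4\lesssim\|\pd_xv_0\|_{L^2}^2+\|v_0\|_{L^2}^3\|\pd_xv_0\|_{L^2}\lesssim\|\pd_xv_0\|_{L^2}^2+\|v_0\|_{L^2}^6$ by the same Gagliardo--Nirenberg inequality, and then taking square roots, yields \eqref{eq:H1-bound1} with a constant depending only on the Gagliardo--Nirenberg constant, hence independent of $t$ and $v_0$.

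There is no real obstacle here: the only care needed is that the above manipulations are \emph{a priori} formal and must be justified for general $v_0\in H^1(\R)$. This is done in the standard way: the identities hold for smooth, rapidly decaying solutions, and the estimate passes to arbitrary $v_0\in H^1(\R)$ by approximating $v_0$ in $H^1(\R)$ and invoking the well-posedness of the mKdV equation in $H^1(\R)$ together with continuous dependence on the initial data (\cite{KPV1,KPV2}); in fact \eqref{eq:H1-bound1} together with local well-posedness is precisely the usual proof that mKdV is globally well-posed in $H^1(\R)$.
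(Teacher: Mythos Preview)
Your proposal is correct and follows essentially the same approach as the paper: both use conservation of $E$ and of the $L^2$-norm together with the Gagliardo--Nirenberg inequality $\|w\|_{L^4}^4\lesssim\|w\|_{L^2}^3\|\pd_xw\|_{L^2}$ and Young's inequality to absorb the quartic term. Your write-up is somewhat more detailed (you also spell out the bound on $|E(v_0)|$ and the approximation/well-posedness justification), but the argument is the same.
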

\begin{proof}
Since $\|\pd_xv_1\|_{L^2}^2\le 2E(v_1)+\frac{3}{2}\|v_1\|_{L^4}^4$
and $\|v_1\|_{L^4}\lesssim \|v_1\|_{L^2}^{3/4}\|\pd_xv_1\|_{L^2}^{1/4}$,
\begin{equation*}
\|\pd_xv_1(t)\|_{L^2}^2\le 2E(v_1(t))+\frac12\|\pd_xv_1(t)\|_{L^2}^2
+O(\|v(t)\|_{L^2}^6)\,.
\end{equation*}
Combining the above with the $L^2$ conservation law
and the energy conservation law, we obtain \eqref{eq:H1-bound1}.
\end{proof}
\begin{lmm}
  \label{lem:v-H1}
There exists a positive constant $\delta'$ such that if
$\|v_0\|_{L^2}+\bM_2(T)+\bM_v(T)+\bM_c(T)<\delta'$, then
\begin{equation}
  \label{eq:H1-bound2}
\|v(t)\|_{H^1}\le C(\|v_0\|_{H^1}+\|v(t)\|_{L^2}^3+|c(t)-c_0|)
\quad\text{for $t\in[0,T]$.}
\end{equation}
\end{lmm}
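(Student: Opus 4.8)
The plan is to derive the $H^1$-bound on $v(t)$ from the two conservation laws of \eqref{eq:gKdV} with $p=3$ — the momentum $\|u\|_{L^2}^2$ and the Hamiltonian $E(u)=\int_\R\left(\frac12(\pd_xu)^2-\frac34u^4\right)dx$ — together with the soliton equation \eqref{eq:B}, the orthogonality condition \eqref{eq:orth2}, and the $L^2$-estimate of Lemma~\ref{lem:v-L2}. Since $E(\varphi_c)$ and $\|\varphi_c\|_{L^2}^2$ are $O(1)$, the whole point is to organize the cancellations so that, after substituting $u(t,x)=\varphi_{c(t)}(y)+v(t,y)$, only the genuinely perturbative quantities $\|v_0\|_{H^1}$, $|c(t)-c_0|$ and $\|v(t)\|_{L^2}$ remain.

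It is convenient to work with the conserved functional $\mathcal{E}_{c_0}(u):=E(u)+\tfrac{c_0}{2}\|u\|_{L^2}^2$. Because $\varphi_{c_0}$ solves \eqref{eq:B}, one has $\mathcal{E}_{c_0}'(\varphi_{c_0})=E'(\varphi_{c_0})+c_0\varphi_{c_0}=0$, so the Taylor expansion of $\mathcal{E}_{c_0}(u(0))=\mathcal{E}_{c_0}(\varphi_{c_0}+v_0)$ about $\varphi_{c_0}$ has no linear term in $v_0$: after bounding the cubic and quartic remainders $-3\int_\R\varphi_{c_0}v_0^3\,dx$ and $-\tfrac34\int_\R v_0^4\,dx$ by Gagliardo--Nirenberg and the smallness of $\|v_0\|_{L^2}$, one gets $\mathcal{E}_{c_0}(u(0))=\mathcal{E}_{c_0}(\varphi_{c_0})+\tfrac12\|\pd_xv_0\|_{L^2}^2+O(\|v_0\|_{H^1}^2)$. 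On the other hand, substituting $u(t)=\varphi_{c(t)}+v(t)$ and expanding about $\varphi_{c(t)}$ — a critical point of $\mathcal{E}_{c(t)}$ but not of $\mathcal{E}_{c_0}$ — produces the same expression with $(c_0,v_0)$ replaced by $(c(t),v(t))$, plus the linear term $\la\mathcal{E}_{c_0}'(\varphi_{c(t)}),v(t)\ra=(c_0-c(t))\la\varphi_{c(t)},v(t)\ra$. Since $\zeta_{c(t)}^2$ is a multiple of $\varphi_{c(t)}$ and $v=v_1+v_2$, \eqref{eq:orth2} gives $\la\varphi_{c(t)},v(t)\ra=\la\varphi_{c(t)},v_1(t)\ra=O(\|v_0\|_{L^2})$ (recall $\|v_1(t)\|_{L^2}=\|v_0\|_{L^2}$), so this term is $O(|c(t)-c_0|\,\|v_0\|_{L^2})$; moreover $\mathcal{E}_{c_0}(\varphi_{c(t)})-\mathcal{E}_{c_0}(\varphi_{c_0})=O(|c(t)-c_0|)$ by smoothness in $c$. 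Equating $\mathcal{E}_{c_0}(u(t))=\mathcal{E}_{c_0}(u(0))$ then yields
\[
\frac12\|\pd_yv(t)\|_{L^2}^2=\frac12\|\pd_xv_0\|_{L^2}^2+O\bigl(\|v_0\|_{H^1}^2+|c(t)-c_0|\bigr)-\frac{c_0}{2}\|v(t)\|_{L^2}^2+\frac92\int_\R\varphi_{c(t)}^2v(t)^2\,dy+3\int_\R\varphi_{c(t)}v(t)^3\,dy+\frac34\int_\R v(t)^4\,dy.
\]

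It remains to control the last four terms. The term $-\tfrac{c_0}{2}\|v(t)\|_{L^2}^2$ has a favourable sign and is discarded; the quadratic term satisfies $\bigl|\tfrac92\int_\R\varphi_{c(t)}^2v(t)^2\,dy\bigr|\lesssim\|v(t)\|_{L^2}^2\lesssim|c(t)-c_0|+\|v_0\|_{L^2}$ by Lemma~\ref{lem:v-L2}. For the nonlinear terms I would apply $\|v(t)\|_{L^\infty}\lesssim\|v(t)\|_{L^2}^{1/2}\|\pd_yv(t)\|_{L^2}^{1/2}$ and $\|v(t)\|_{L^4}\lesssim\|v(t)\|_{L^2}^{3/4}\|\pd_yv(t)\|_{L^2}^{1/4}$ to get $\bigl|\int_\R\varphi_{c(t)}v(t)^3\,dy\bigr|\lesssim\|v(t)\|_{L^2}^{5/2}\|\pd_yv(t)\|_{L^2}^{1/2}$ and $\bigl|\int_\R v(t)^4\,dy\bigr|\lesssim\|v(t)\|_{L^2}^3\|\pd_yv(t)\|_{L^2}$, and then Young's inequality to trade the powers of $\|\pd_yv(t)\|_{L^2}$ for $\tfrac14\|\pd_yv(t)\|_{L^2}^2$, absorbed into the left-hand side (this is where the smallness of $\|v(t)\|_{L^2}$ enters), plus a remainder $\lesssim\|v(t)\|_{L^2}^3$. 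Collecting these bounds, absorbing, and adding $\|v(t)\|_{L^2}^2$ to both sides gives $\|v(t)\|_{H^1}\le C\bigl(\|v_0\|_{H^1}+\|v(t)\|_{L^2}^3+|c(t)-c_0|\bigr)$, as asserted.

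The main obstacle is the familiar one attached to the soliton--perturbation cubic interaction $\int_\R\varphi_{c(t)}v(t)^3\,dy$: the Gagliardo--Nirenberg step leaves only ``half a derivative'', so in the subsequent Young step the coefficient produced in front of $\|\pd_yv(t)\|_{L^2}^2$ must be kept strictly below $\tfrac12$; this is exactly why the hypothesis forces $\|v_0\|_{L^2}+\bM_2(T)+\bM_v(T)+\bM_c(T)$ — hence $\|v(t)\|_{L^2}$ — to be small. A secondary point requiring care is to verify that after passing to $\mathcal{E}_{c_0}$ and invoking \eqref{eq:orth2} no term merely linear in $v_0$ or in $v(t)$ has survived, since such a term could not be bounded by the right-hand side.
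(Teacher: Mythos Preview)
Your overall strategy --- work with the conserved functional $\mathcal{E}_{c_0}=E+\tfrac{c_0}{2}\|\cdot\|_{L^2}^2$, expand at time $0$ around $\varphi_{c_0}$ and at time $t$ around $\varphi_{c(t)}$, and kill the linear term via \eqref{eq:orth2} --- is exactly the paper's. The gap is in your treatment of the quadratic term $\tfrac92\int\varphi_{c(t)}^2v^2$. You bound it crudely by $O(\|v(t)\|_{L^2}^2)$ and then invoke Lemma~\ref{lem:v-L2}; but this places a term of size $|c(t)-c_0|+\|v_0\|_{L^2}$ on the right-hand side of your estimate for $\|\pd_yv(t)\|_{L^2}^{\,2}$. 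After absorbing and taking square roots you therefore obtain only
\[
\|v(t)\|_{H^1}\lesssim \|v_0\|_{H^1}+|c(t)-c_0|^{1/2}+\|v_0\|_{L^2}^{1/2}+\|v(t)\|_{L^2}^{3/2},
\]
which is strictly weaker than \eqref{eq:H1-bound2}: for small quantities, $|c(t)-c_0|^{1/2}\gg|c(t)-c_0|$ and $\|v_0\|_{L^2}^{1/2}$ is not controlled by $\|v_0\|_{H^1}$.

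What is missing is the coercivity of the second variation on $v_2$, which the paper proves using \emph{both} orthogonality conditions \eqref{eq:orth1} and \eqref{eq:orth2} (you use only \eqref{eq:orth2}). Writing $L=S''(\varphi_c)+(c-c_0)I$ and decomposing $v_2$ along the two negative/zero directions $\varphi_c^2,\varphi_c'$ of $L$, one shows $\la S''(\varphi_c)v_2,v_2\ra\ge\nu\|v_2\|_{H^1}^2$ for some $\nu>0$; this \emph{absorbs} the indefinite potential $-\tfrac92\int\varphi_c^2v^2$ rather than estimating it. Consequently every error term in the bound for $\|v_2\|_{H^1}^2$ is a genuine square --- $O(\|v_0\|_{H^1}^2)$, $O(|c(t)-c_0|^2)$, $O(\|v(t)\|_{L^2}^6)$ --- and the square root yields \eqref{eq:H1-bound2} as stated.
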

\begin{proof}
Let $S(u):=E(u)+\frac{c_0}{2}\|u\|_{L^2}^2$.
Thanks to the energy and the $L^2$ conservation laws,
\begin{equation}
  \label{eq:Sc0}
  \begin{split}
S(\varphi_{c_0}+v_0)=& S(\varphi_{c(t)}+v)
\\=& S(\varphi_{c(t)})+\la S'(\varphi_{c(t)}),v\ra
+\frac{1}{2}\la S''(\varphi_{c(t)})v,v\ra-R\,,
  \end{split}
\end{equation}
where
$$R=\frac34\int_\R\left(4\varphi_{c(t)}v^3+v^4\right)\,dy\,.$$
Since $S'(\varphi_{c_0})=0$ by \eqref{eq:B},
\begin{equation}
  \label{eq:Sc1}
S(\varphi_{c(t)})=S(\varphi_{c_0})+O(|c(t)-c_0|^2)\,.
\end{equation}
By \eqref{eq:orth2}, the fact that $S'(\varphi_{c(t)})=(c_0-c(t))\varphi_{c(t)}$
and \eqref{eq:L2v1},
\begin{equation}
  \label{eq:Sc2}
\la S'(\varphi_{c(t)}),v\ra=(c_0-c(t))\la v_1,\varphi_{c(t)}\ra
=O\left(|c(t)-c_0|\|v_0\|_{L^2}\right)\,.
\end{equation}
\par
Next, we will show that $S''(\varphi_c)$ is positive definite for $v_2$.
Let $L=S''(\varphi_c)+(c-c_0)I$ and
$$v_2=a\varphi_c^2+b\varphi_c'+p\,,\quad
\la p,\varphi_c^2\ra=\la p,\varphi_c'\ra=0\,.$$
Note that 
\begin{equation}
  \label{eq:S''}
L\varphi_c^2=-3c\varphi_c^2\,, \quad L\varphi_c'=0\,,  
\end{equation}
and that $L$ is positive definite on 
${}^\perp\spann\{\varphi_c^2\,,\varphi_c'\}$  by the Sturm-Liouville theorem.
By \eqref{eq:S''},
$$\la Lv_2,v_2\ra=\la Lp,p\ra
-3ca^2\la L\varphi_c^2,\varphi_c^2\ra\,.$$
Since $\la a\varphi_c^2+p,\varphi_c\ra=\la v_2,\varphi_c\ra=0$ by
\eqref{eq:orth2} and $d\|\varphi_c\|_{L^2}^2/dc>0$,
$$\la Lv_2,v_2\ra \gtrsim \|a\varphi_c^2+p\|_{H^1}^2$$
in exactly the same way as \cite[Proof of Theorem~3.3]{GSS}.
Thanks to the orthogonality condition \eqref{eq:orth1}, we have
$|b|\lesssim \|a\varphi_c^2+p\|_{H^1}$.
Thus there exists a positive constant $\nu$ such that
\begin{equation}
  \label{eq:S-v2}
\la S''(\varphi_c)v_2,v_2\ra\ge \nu\|v_2\|_{H^1}^2
\end{equation}
provided $|c-c_0|$ is sufficiently small.
\par
By \eqref{eq:S-v2} and Lemma~\ref{lem:v1-H1},
\begin{equation}
  \label{eq:Sc3}
\la S''(\varphi_{c(t)})v,v\ra\ge \frac{\nu}{2}\|v_2(t)\|_{H^1}^2
-O(\|v_1(t)\|_{H^1}^2)
\ge \frac{\nu}{2}\|v_2(t)\|_{H^1}^2-O(\|v_0\|_{H^1}^2)\,.
\end{equation}
By the Sobolev imbedding theorem,
\begin{equation}
  \label{eq:Sc4}
|R|\le \frac{\nu}{8}\|\pd_xv\|_{L^2}^2+O(\|v(t)\|_{L^2}^6)\,.
\end{equation}
Combining \eqref{eq:Sc0}--\eqref{eq:Sc2}, \eqref{eq:Sc3} and
\eqref{eq:Sc4}, we obtain \eqref{eq:H1-bound2}.
\end{proof}

Now we are in position to prove Lemma~\ref{lem:v2est-b}.
\begin{proof}[Proof of Lemma~\ref{lem:v2est-b}]
By Lemmas~\ref{lem:modulation-3}, \ref{lem:v-L2} and \ref{lem:v1-b},
\begin{gather}
  \label{eq:Ms}
  \bM_1(T)\lesssim \|v_0\|_{L^2}\,,\quad
\bM_c(T)+\bM_\gamma(T)+\bM_v(T)\lesssim \|v_0\|_{L^2}+\bM_2(T)^2\,.
\end{gather}
Furthermore, it follows from \eqref{eq:x,c-est2} and \eqref{eq:Ms} that
\begin{equation}
  \label{eq:dx-c3}
  \begin{split}
&\|\dot{c}\|_{L^1(0,T)+L^2(0,T)}+\|\dot{x}-c\|_{L^1(0,T)+L^2(0,T)}
\\ \lesssim & \bM_1(T)+\bM_2(T)^2\lesssim \|v_0\|_{L^2}+\bM_2(T)^2\,.
  \end{split}
\end{equation}
\par

Now we will estimate $\bM_2(T)$. Instead of $v_2$, we will estimate
its small translation.
Let $\tv_2(t,y)=v(t,y+h(t))$. By \eqref{eq:v2},
\begin{equation}
  \label{eq:tv2}\left\{
  \begin{aligned}
& \pd_t\tv_2+\mL_{c_0}\tv_2+\tau_{h(t)}\ell(t)
+\pd_y(\tau_{h(t)}N(t)+\widetilde{N}(t))=0\,,\\
& \tv_2(0,x)=0\,,
  \end{aligned}\right.  
\end{equation}
where $\tau_h$ is a shift operator defined by $\tau_hg(x)=g(x+h)$ and
$$\widetilde{N}(t)=\left(c_0-\dot{\gamma}(t)\right)\tv_2
+9\left(\tau_{h(t)}\varphi_{c(t)}^2-\varphi_{c_0}^2\right)\tv_2\,.$$
Using the variation of constants formula, we have
$Q_{c_0}\tv_2(t)=v_{21}(t)+v_{22}(t)+v_{23}(t)$, where
\begin{gather*}
v_{21}(t)=-\int_0^t e^{-(t-s)\mL_{c_0}}Q_{c_0}\tau_{h(s)}\ell(s)\,ds\,,\\
v_{22}(t)=-\int_0^t e^{-(t-s)\mL_{c_0}}Q_{c_0}\tau_{h(s)}
(N(s)+\widetilde{N}(s)-N_{22}(s))\,ds\,,\\
v_{23}(t)=-\int_0^t e^{-(t-s)\mL_{c_0}}Q_{c_0}\tau_{h(s)}N_{22}(s)\,ds\,.  
\end{gather*}
Note that $\|\tv_2(t)\|_{H^1_a}\lesssim \|Q_{c_0}\tv_2(t)\|_{H^1_a}$
as in the proof of Lemma~\ref{lem:v2est-a}.
By Proposition~\ref{prop:linear},
\begin{equation}
\label{eq:v21}
\|v_{21}(t)\|_{H^1_a} \lesssim 
\int_0^t e^{-b(t-s)}\|\tau_{h(s)}\ell(s)\|_{H^1_a}\,ds\,.
\end{equation}
By \eqref{eq:dx-c3}, \eqref{eq:h(t)} and the definition of $\ell$,
\begin{equation}
  \label{eq:l-3}
\|\tau_{h(t)}\ell(t)\|_{L^1(0,T;H^1_a)+L^2(0,T;H^1_a)}
\lesssim \|v_0\|_{L^2}+\bM_2(T)^2\,.
\end{equation}
Combining \eqref{eq:v21} and \eqref{eq:l-3}, we have
\begin{equation}
  \label{eq:v21b}
\|v_{21}\|_{L^\infty(0,T;H^1_a)}+\|v_{21}\|_{L^2(0,T;H^1_a)}
\lesssim \|v_0\|_{L^2}+\bM_2(T)^2\,.
\end{equation}
\par
Using Corollary~\ref{cor:s-smoothing},
we can estimate $\sup_{t\in[0,T]}\|v_{22}(t)\|_{L^2_a}$ in the same way
as the proof of Lemma~\ref{lem:v2est-a}. Indeed,
\begin{equation}
  \label{eq:tv2-p1}
  \begin{split}
& \|v_{22}(t)\|_{L^2_a}\lesssim 
\int_0^te^{-b'(t-s)}(t-s)^{-1/2}
\left(\|\tau_{h(s)}N_{11}(s)\|_{L^2_a}+\|\widetilde{N}(s)\|_{L^2_a}\right)\,ds
\\ & +\int_0^te^{-b'(t-s)}(t-s)^{-3/4}
\left(\|\tau_{h(s)}N_{12}(s)\|_{L^1_a}+\|\tau_{h(s)}N_{21}(s)\|_{L^1_a}\right)\,ds\,.
  \end{split}
\end{equation}
Now we will estimate each term of the right hand side of \eqref{eq:tv2-p1}.
Since 
$|\widetilde{N}(s)|\lesssim (|\dot{\gamma}(s)-c(s)|+|c(s)-c_0|)|v_2|$,
it follows from \eqref{eq:Ms}  and \eqref{eq:h(t)} that
\begin{equation}
  \label{eq:wN-3}
  \begin{split}
& \|\tau_{h(s)}\widetilde{N}\|_{L^\infty(0,T;L^2_a)} 
+\|\tau_{h(s)}\widetilde{N}\|_{L^2(0,T;L^2_a)}
\\ \lesssim &
(\bM_c(T)+\bM_\gamma(T))\bM_2(T) \lesssim (\|v_0\|_{L^2}+\bM_2(T)^2)\bM_2(T)\,.
  \end{split}
\end{equation}
By \eqref{eq:defN1}, \eqref{eq:defN2},  \eqref{eq:Ms} and \eqref{eq:h(t)},
\begin{gather}
  \label{eq:N11'}
\|\tau_{h(s)}N_{11}\|_{L^\infty(0,T;L^2_a)}+\|\tau_{h(s)}N_{11}\|_{L^2(0,T;L^2_a)}
\lesssim \bM_1(T) \lesssim \|v_0\|_{L^2}\,,
\\ \label{eq:N12'}
\|\tau_{h(s)}N_{12}\|_{L^\infty(0,T;L^1_a)}
\lesssim \bM_1(T)^2\lesssim \|v_0\|_{L^2}^2\,,
\end{gather}
\begin{equation}
  \label{eq:N21'}
\|\tau_{h(s)}N_{21}\|_{L^\infty(0,T;L^1_a)}\lesssim 
\bM_1(T)(\bM_1(T)+\bM_2(T))\lesssim \|v_0\|_{L^2}^2+\bM_2(T)^2\,.
\end{equation}
Combining \eqref{eq:tv2-p1}--\eqref{eq:N21'} with Young's inequality,
we have
\begin{equation}
  \label{eq:v22a}
\|v_{22}\|_{L^\infty(0,T;L^2_a)}\lesssim \|v_0\|_{L^2}+\bM_2(T)^2\,.
\end{equation}
\par
Next we will estimate $\|v_2\|_{L^2(0,T;H^1_a)}$.
By \eqref{eq:w2} in Lemma~\ref{cl:winfty},
\begin{equation}
  \label{eq:N12-21''}
\begin{split}
& \|\tau_{h(s)}N_{12}\|_{L^2(0,T;L^2_a)}+\|\tau_{h(s)}N_{21}\|_{L^2(0,T;L^2_a)}
\\ \lesssim &
\bM_1(T)(\bM_1(T)+\bM_2(T))\lesssim \|v_0\|_{L^2}^2+\bM_2(T)^2\,.
\end{split}
\end{equation}
Combining Corollary~\ref{cor:s-smoothing} with \eqref{eq:wN-3},
\eqref{eq:N11'} and \eqref{eq:N12-21''}, we have
\begin{equation}
  \label{eq:v22b}
\|v_{22}\|_{L^\infty(0,T;H^1_a)}  \lesssim \|v_0\|_{L^2}+\bM_2(T)^2\,.
\end{equation}
\par
Finally, we will estimate $v_{23}(t)$.
By Corollary~\ref{cor:s-smoothing} and \eqref{eq:h(t)},
\begin{equation}
\label{eq:v23a}
\begin{split}
\|v_{23}(t)\|_{L^2_a}\lesssim &
\int_0^te^{-b'(t-s)}(t-s)^{-1/2}\|\tau_{h(s)}N_{22}(s)\|_{L^2_a}\,ds
\\ \lesssim & \int_0^te^{-b'(t-s)}(t-s)^{-1/2}
\left(\|v_1^2v_2(s)\|_{L^2_a}+\|v_2^3(s)\|_{L^2_a}\right)\,ds\,.
\end{split}
\end{equation}
Since $\|f\|_{L^4}\lesssim \|f\|_{L^2}^{3/4}\|\pd_xf\|_{L^2}^{1/4}$
and $\|v_1(t)\|_{L^2}=\|v_0\|_{L^2}$ is small,
Lemma~\ref{lem:v1-H1} implies\linebreak
$\|v_1(t)\|_{L^4}\lesssim \|v_0\|_{L^2}^{3/4}\|v_0\|_{H^1}^{1/4}$.
Hence by \eqref{eq:w2},
\begin{equation*}
\|v_1^2v_2\|_{L^2_a}\lesssim \|v_1\|_{L^4}^2\|v_2\|_{L^\infty_a}
\lesssim
\|v_0\|_{L^2}^{3/2}\|v_0\|_{H^1}^{1/2}\|v_2\|_{L^2_a}^{1/2}\|v_2\|_{H^1_a}^{1/2}\,.
\end{equation*}
By the definition of $\bM_2(T)$ with $p=3$,
\begin{equation}
\label{eq:N22-1}
\|v_1^2v_2\|_{L^2(0,T;L^2_a)}+\|v_1^2v_2\|_{L^4(0,T;L^2_a)}
\lesssim \|v_0\|_{L^2}^{3/2}\|v_0\|_{H^1}^{1/2}\bM_2(T)\,.
\end{equation}
Lemmas~\ref{lem:v1-H1} and \ref{lem:v-H1}, \eqref{eq:L2v1}
and \eqref{eq:Ms} imply 
\begin{equation}
  \label{eq:v-H1'}
  \begin{split}
& \|v_2(t)\|_{L^2}\le \|v(t)\|_{L^2}+\|v_1(t)\|_{L^2}\lesssim
\|v_0\|_{L^2}^{1/2}+\bM(T)\,,\\
& \|v_2(t)\|_{H^1}\le \|v(t)\|_{H^1}+\|v_1(t)\|_{H^1}
\lesssim \|v_0\|_{H^1}+\bM_2(T)^2\,.    
  \end{split}
\end{equation}
Combining \eqref{eq:Ms}, \eqref{eq:v-H1'} and \eqref{eq:w1} 
in Section~\ref{sec:ap1} with $\theta=5/7$, we have
\begin{align*}
\|v_2^3\|_{L^2_a}\le &  \|v_2\|_{L^2}\|v_2^2\|_{L^\infty_a}
\\ \lesssim & \|v_2\|_{H^1_a}^{5/7}\|v_2\|_{L^2_a}^{2/7}
\|v_2\|_{L^2}^{12/7}\|v_2\|_{H^1}^{2/7}
\\ \lesssim & \|v_2\|_{H^1_a}^{5/7}\|v_2\|_{L^2_a}^{2/7}
(\|v_0\|_{L^2}^{6/7}+\bM_2(T)^{12/7})(\|v_0\|_{H^1}^{2/7}+\bM_2(T)^{4/7})\,.
\end{align*}  
Since $\left\|\|v_2\|_{H^1_a}^{5/7}\|v_2\|_{L^2_a}^{2/7}\right\|_{L^2(0,T)\cap L^{14/5}(0,T)}
\lesssim \bM_2(T)$, 
\begin{equation}
  \label{eq:N22-2}
\begin{split}
& \|v_2^3\|_{L^2(0,T;L^2_a)}+\|v_2^3\|_{L^{14/5}(0,T;L^2_a)}
\\ \lesssim &
(\|v_0\|_{L^2}^{6/7}+\bM_2(T)^{12/7})(\|v_0\|_{H^1}^{2/7}+\bM_2(T)^{4/7})\bM_2(T)
\\ \lesssim & \|v_0\|_{L^2}^{6/7}\|v_0\|_{H^1}^{2/7}
\left(1+\frac{\bM_2(T)^2}{\|v_0\|_{L^2}}\right)^{6/7}\bM_2(T)
\\ & +\|v_0\|_{L^2}^{6/7}\bM_2(T)^{11/7}+\bM_2(T)^{23/7}\,.
\end{split}
\end{equation}
Substituting \eqref{eq:N22-1} and \eqref{eq:N22-2} into \eqref{eq:v23a}
and using Young's inequality, we have
\begin{equation}
  \label{eq:v23b}
\sup_{t\in[0,T]}\|v_{23}(t)\|_{L^2_a}\lesssim
\left\{ \eta +\|v_0\|_{H^1}^{2/7}\|v_0\|_{L^2}^{6/7}
\left(\frac{\bM_2(T)^2}{\|v_0\|_{L^2}}\right)^{6/7}\right\} \bM_2(T)\,,
\end{equation}
where
$\eta=\|v_0\|_{H^1}^{1/2}\|v_0\|_{L^2}^{3/2}+\|v_0\|_{H^1}^{2/7}\|v_0\|_{L^2}^{6/7}
+\|v_0\|_{L^2}^{6/7}\bM_2(T)^{4/7}+\bM_2(T)^{16/7}$.
\par
On the other hand, applying Proposition~\ref{prop:l-smoothing} to $v_{23}(t)$,
we have
\begin{equation}
  \label{eq:v23c}
\|v_{23}(t)\|_{L^2(0,T;H^1_a)}\lesssim 
\left\{ \eta +\|v_0\|_{H^1}^{2/7}\|v_0\|_{L^2}^{6/7}
\left(\frac{\bM_2(T)^2}{\|v_0\|_{L^2}}\right)^{6/7}\right\} \bM_2(T)\,.
\end{equation}
Combining \eqref{eq:v21b}, \eqref{eq:v22a}, \eqref{eq:v22b},
\eqref{eq:v23b} and \eqref{eq:v23c}, we obtain 
\begin{equation*}
\bM_2(T)\lesssim \|v_0\|_{L^2}+\|v_0\|_{H^1}^{2/7}\|v_0\|_{L^2}^{6/7}
\left(\frac{\bM_2(T)^2}{\|v_0\|_{L^2}}\right)^{6/7}\bM_2(T)\,,
\end{equation*}
whence $\bM_2(T)\lesssim \|v_0\|_{L^2}$ if $\delta_5$ is sufficiently small.
Thus we complete the proof.
\end{proof}

\section{Proof of  Theorems~\ref{thm:1} and \ref{thm:2}}
\label{sec:thms}
Now we are in position to complete the proof of Theorems~\ref{thm:1}
and \ref{thm:2}.
\begin{proof}[Proof of Theorem~\ref{thm:1}]
By Lemma~\ref{lem:decomp}, the decomposition \eqref{eq:decomp2}
satisfying the orthogonality conditions \eqref{eq:orth1} and
\eqref{eq:orth2} exists on $[0,T]$ for a $T>0$.
Moreover, Lemma~\ref{lem:v2est-a} implies
$$\bM_{tot}(T) \lesssim \|v_0\|_{L^2}\le \frac12\min_{0\le i\le 5}\delta_i$$
if $\|v_0\|_{L^2}$ is sufficiently small. Hence it follows from
Lemma~\ref{lem:decomp} that the decomposition \eqref{eq:decomp2} satisfying 
\eqref{eq:orth1} and \eqref{eq:orth2} persists on $[0,\infty)$.
Thus we may take $T=\infty$ in Lemma~\ref{lem:modulation} and
it follows that
\begin{equation}
  \label{eq:bminfty}
\bM_{tot}(\infty)\lesssim \|v_0\|_{L^2}\,,
\end{equation}
\begin{equation}
  \label{eq:c-c+,pre1}
\sup_{t\ge0}(|c(t)-c_0|+|\dot{x}(t)-c(t)|)\lesssim \|v_0\|_{L^2}  
\end{equation} and
\begin{align*}
\|u(t,\cdot)-\varphi_{c_0}(\cdot-x(t))\|_{L^2}
\le  & \|\varphi_{c(t)}-\varphi_{c_0}\|_{L^2}+\|v(t,\cdot)\|_{L^2}
\\ \lesssim & |c(t)-c_0|+\|v(t,\cdot)\|_{L^2}\lesssim \|v_0\|_{L^2}^{1/2}\,.
\end{align*}
Thus we prove \eqref{eq:os-2}.
\par
Next we will prove \eqref{dotx} and \eqref{c-c+}.
By Corollary~\ref{cor:vir1},
\begin{equation}
  \label{eq:v1-lim}
\|v_1(t)\|_W^2 \lesssim \int_\R \chi_a(y)v_1^2(t,y)\,dy\to0\quad\text{as $t\to\infty$.}
\end{equation}
Integrating \eqref{eq:c-refine} with respect to $t$ and
combining the resulting equation with \eqref{eq:v1-lim} and the fact that
$$\int_0^\infty(\|v_1(t)\|_W^2+\|v_2(t)\|_{L^2_a}^2)\,dt
\lesssim \bM_1(\infty)^2+\bM_2(\infty)^2\lesssim \|v_0\|_{L^2}^2\,,$$
we see that $c_+:=\lim_{t\to\infty}c(t)$ exists and
\begin{equation}
  \label{eq:c-c+,pre2}
  |c_+-c_0|\lesssim \|v_0\|_{L^2}\,.
\end{equation}
Moreover, applying the H\"older inequality to \eqref{eq:v2-p1}
separately on the integral intervals $[0,t/2]$ and $[t/2,t]$
and using \eqref{eq:N1-w}--\eqref{eq:l-w}, we can show that
\begin{equation}
  \label{eq:v2-lim}
\lim_{t\to\infty}\|v_2(t)\|_{L^2_a}=0\,.
\end{equation}
By \eqref{eq:x,c-est}, \eqref{eq:v1-lim} and \eqref{eq:v2-lim},
we see that $\dot{x}(t)- c(t)\to0$ as $t\to\infty$.
Thus we prove \eqref{dotx}. Eq.~\eqref{c-c+} follows from \eqref{eq:c-c+,pre1}
and \eqref{eq:c-c+,pre2}.
\par
Finally, we will prove \eqref{eq:as-2}. 
Since $\lim_{t\to\infty}\|\varphi_{c(t)}-\varphi_{c_+}\|_{L^2}=0$,
it suffices to prove that as $t\to\infty$,
\begin{equation}
  \label{eq:asymp1}
\|u(t,\cdot)-\varphi_{c(t)}(\cdot-x(t))\|_{L^2(x\ge\sigma t)}
=\|v(t,\cdot)\|_{L^2(y\ge \sigma t-x(t))}\to0\,.
\end{equation}
Note that \eqref{eq:v1-lim} and \eqref{eq:v2-lim} already imply
\begin{equation}
  \label{eq:v-lim}
  \lim_{t\to\infty}\int_\R \chi_a(y)v^2(t,y)=0\,.
\end{equation}
We may write \eqref{eq:v} as
\begin{equation}
  \label{eq:v-2}
  \pd_tv+\pd_y^3v-\dot{x}\pd_yv+\ell+\pd_yf(v)+\pd_yN_3(t)=0\,,
\end{equation}
where $N_3(t)=f(\varphi_{c(t)}+v)-f(\varphi_{c(t)})-f(v)=6\varphi_{c(t)}v$.
\par

Let $c_1\in(0,\sigma)$, $t_1>0$ and $y_1(t)=c_1(t-t_1)-x(t)+x(t_1)$.
Multiplying \eqref{eq:v-2} by $2\chi_a(y-y_1(t))v(t,y)$ and integrating the
resulting equation by parts, we have 
\begin{equation}
  \label{eq:virial-v}
  \begin{split}
&\frac{d}{dt}\int_\R \chi_a(y-y_1(t))v^2(t,y)\,dy
+\int_\R \chi_a'(y-y_1(t))\{3(\pd_yv)^2+c_1v^2-g(v)\}(t,y)\,dy
\\ &=
\int_\R \chi_a'''(y-y_1(t))v^2(t,y)\,dy
+J(t)\,,    
  \end{split}
\end{equation}
where $g(v)=2f(v)v-2\int_0^vf(u)\,du$ and
$$J(t)=-2\int_\R\chi_a(y-y_1(t))v(t,y)(\ell(t)+\pd_yN_3(t))\,dy\,.$$
Lemma~\ref{lem:weight} implies
that
\begin{equation}
  \label{eq:pqf}
\int_\R \chi_a'(y-y_1(t))\{3(\pd_yv)^2+c_1v^2-g(v)\}(t,y)\,dy\ge
\int_\R \chi_a'''(y-y_1(t))v^2(t,y)\,dy\,.
\end{equation}
if $a$ and $\|v(t)\|_{L^2}\le\bM_v(\infty)$ is sufficiently small.
Note that $|\chi_a'''|\le 4a^2\chi_a'$.
\par
By \eqref{eq:x,c-est2} and the fact that $\ell$ and $N_3$ are exponentially
localized by $\varphi_{c(t)}$ and its derivatives, we have
\begin{equation}
  \label{eq:J1}
  |J(t)|\lesssim \|v_1(t)\|_{W_1}^2+\|v_2(t)\|_{H^1_a}^2\,,
\end{equation}
and $J\in L^1(0,\infty)$ by \eqref{eq:bminfty} and \eqref{eq:J1}.
Integrating \eqref{eq:virial-v} over $[t_1,t]$, we obtain
\begin{equation*}
  \int_\R \chi_a(y-y_1(t))v^2(t,y)\,dy \le \int_\R \chi_a(y)v^2(t_1,y)\,dy+\int_{t_1}^\infty J(s)\,ds\,.
\end{equation*}
Let $t\ge t_1\to\infty$. Then by \eqref{eq:v-lim} and the fact that $J\in L^1(0,\infty)$,
$$\lim_{t\to\infty}\int_\R \chi_a(y-y_1(t))v^2(t,y)\,dy=0\,.$$
Since $\sigma t-x(t)\ge y_1(t)$ for $t$ sufficiently larger than $t_1$, we conclude \eqref{eq:as-2}.
This completes the proof of Theorem~\ref{thm:1}.
\end{proof}
Since Theorem~\ref{thm:2} can be shown in exactly the same way as
the proof of Theorem~\ref{thm:1}, we omit the proof.

\section{Appendix: Weighted Sobolev inequalities}
\label{sec:ap1}
In this section, we recollect weighted Sobolev estimates.
To prove Lemma~\ref{lem:virial}, we use the following weighted inequality
as in \cite{MT}.
\begin{lmm}
\label{lem:weight}
Let $p=1$, $2$ or $3$ and $\eps>0$. Let $\chi_\eps(x)=1+\tanh\eps x$.
Then for every $v\in H^1(\R)$ and $x_0\in\R$, 
$$\left|\int_\R\chi_\eps'(x+x_0)v(x)^{p+1}\,dx\right|\le 
(1+2\eps)^{(p-1)/2}\|v\|_{L^2}^{p-1}\int_\R\chi_\eps'(x+x_0)
\left(v'(x)^2+v(x)^2\right)\,dx\,.$$
\end{lmm}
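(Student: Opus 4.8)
The plan is to reduce the weighted inequality to a single pointwise bound for the weighted function $\sqrt{\chi_\eps'}\,v$, and then distribute the remaining powers of $v$ case by case. Write $g(x)=\chi_\eps'(x+x_0)$ and $I=\int_\R g(v'^2+v^2)\,dx$. Note $g\ge0$ and, since $\chi_\eps'(x)=\eps\sech^2\eps x$ and hence $\chi_\eps''(x)=-2\eps^2\sech^2\eps x\,\tanh\eps x$, we have $|g'(x)|=2\eps|\tanh\eps(x+x_0)|\,g(x)\le2\eps g(x)$ (compare \eqref{eq:ws0}). The key claim I would establish first is
\[
\sup_{x\in\R}g(x)\,v(x)^2\le(1+2\eps)\,I\,.
\]

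To prove this, observe that for $v\in H^1(\R)$ the function $gv^2$ is absolutely continuous and tends to $0$ as $x\to-\infty$, so $g(x)v(x)^2=\int_{-\infty}^x(g'v^2+2gvv')\,dy\le\int_\R(|g'|v^2+2g|v|\,|v'|)\,dy$. Using $|g'|\le2\eps g$ and the weighted arithmetic--geometric mean inequality $2g|v||v'|\le g v^2+g v'^2$, the right-hand side is at most $(1+2\eps)\int g v^2+\int g v'^2\le(1+2\eps)\bigl(\int g v^2+\int g v'^2\bigr)=(1+2\eps)I$. This is the claim; equivalently $\|g^{1/2}v\|_{L^\infty}^2\le(1+2\eps)I$.

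With this in hand the three cases are immediate. For $p=1$ the inequality is just $\int g v^2\le I$. For $p=3$ one has $\int g v^4\le\|g^{1/2}v\|_{L^\infty}^2\|v\|_{L^2}^2\le(1+2\eps)\|v\|_{L^2}^2I$. For $p=2$ the Cauchy--Schwarz inequality with weight $g$ gives $\int g|v|^3\le\bigl(\int g v^2\bigr)^{1/2}\bigl(\int g v^4\bigr)^{1/2}\le I^{1/2}\bigl((1+2\eps)\|v\|_{L^2}^2I\bigr)^{1/2}=(1+2\eps)^{1/2}\|v\|_{L^2}I$. I expect no real obstacle here; the only point that demands care is landing on the sharp constant $(1+2\eps)^{(p-1)/2}$ rather than a larger one, which is exactly why one splits $2g|v||v'|$ by AM--GM in the pointwise bound and interpolates the $p=2$ case via Cauchy--Schwarz instead of estimating $\int g|v|^3$ directly.
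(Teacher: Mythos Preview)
Your argument is correct and follows essentially the same route as the paper: establish the pointwise bound $\sup_x \chi_\eps'(x+x_0)v(x)^2\le(1+2\eps)\int\chi_\eps'(v^2+v'^2)$ via the fundamental theorem of calculus together with $|\chi_\eps''|\le2\eps\chi_\eps'$, deduce the $p=3$ case, and interpolate $p=2$ from $p=1$ and $p=3$ by Cauchy--Schwarz. The only cosmetic difference is that the paper phrases the cross-term estimate as the Schwarz inequality on the integral rather than AM--GM pointwise, which yields the same bound.
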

\begin{proof}
Since the case $p=1$ is obvious
and the case $p=2$ follows from the cases $p=1$ and $p=3$,
we only need to prove the case $p=3$.
Since $\lim_{x\to\pm\infty}v(x)=0$ for $v\in H^1(\R)$,
  \begin{align*}
\chi_\eps'(x+x_0)v(x)^2=& \int_{-\infty}^x \left(\chi_\eps'(y+x_0)v(y)^2\right)'\,dy
\\ = & \int_{-\infty}^x \chi_\eps''(y+x_0)\,v(y)^2\,dy
+2\int_{-\infty}^x \chi_\eps'(y+x_0)v(y)v'(y)\,dy\,.
  \end{align*}
Using the Schwarz inequality and the fact that
$ 0<\chi_\eps'(x)<2\eps\chi_\eps(x)$ and
$|\chi_\eps''(x)|\le2\eps\chi_\eps'(x)$ for every $x\in\R$, we have
$$\sup_{x\in\R}\chi_\eps'(x+x_0)v(x)^2
\le \int_\R \chi_\eps'(x+x_0)\left((1+2\eps)v(x)^2+v'(x)^2\right)\,dx\,.$$
Thus we have
\begin{align*}
\left|\int_\R \chi_\eps'(x+x_0)v^4(x)\,dx\right|
\le & \|v\|_{L^2}^2\sup_{x\in\R}\chi_\eps'(x+x_0)v(x)^2
\\ \le & \|v\|_{L^2}^2
\int_\R \chi_\eps'(x+x_0)\left((1+2\eps)v(x)^2+v'(x)^2\right)\,dx\,.
\end{align*}
Thus we complete the proof.
\end{proof}

\begin{lmm}
\label{cl:winfty}  Let $a>0$. Then
\begin{gather}
\label{eq:w1}
\|w^2\|_{L^\infty_a}\le 2\|w\|_{L^2}^\theta\|\pd_xw\|_{L^2}^{1-\theta}
\|\pd_xw\|_{L^2_a}^\theta\|w\|_{L^2_a}^{1-\theta}
\quad\text{for $\theta\in[0,1]$,}
\\ \label{eq:w2}
\|w\|_{L^\infty_a}^2\lesssim \|w\|_{L^2_a}\|w\|_{H^1_a}\,,\quad  
\|e^{-2a|\cdot|}w^2\|_{L^\infty} \lesssim \|w\|_W\|w\|_{W_1}\,.
\end{gather}
\end{lmm}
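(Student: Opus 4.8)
The plan is to derive all three estimates from the elementary one–dimensional inequality
\begin{equation*}
\|g\|_{L^\infty(\R)}^2\le 2\|g\|_{L^2(\R)}\|\pd_xg\|_{L^2(\R)}\qquad(g\in H^1(\R)),
\end{equation*}
which follows from $g(x)^2=2\int_{-\infty}^x g\,\pd_yg\,dy$ and the Cauchy--Schwarz inequality, together with the log-convexity on $[0,1]$ of $s\mapsto\int_\R e^{2say}|f(y)|^2\,dy$ (immediate from H\"older's inequality), which yields $\|e^{sa\cdot}f\|_{L^2}\le\|f\|_{L^2}^{1-s}\|f\|_{L^2_a}^{s}$ for $s\in[0,1]$.

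For the first inequality in \eqref{eq:w2} I would set $g=e^{a\cdot}w$; then $g\in H^1(\R)$ with $\pd_xg=e^{a\cdot}(aw+\pd_xw)$, so that $\|\pd_xg\|_{L^2}\le a\|w\|_{L^2_a}+\|\pd_xw\|_{L^2_a}\lesssim\|w\|_{H^1_a}$, and the displayed inequality gives $\|w\|_{L^\infty_a}^2=\|g\|_{L^\infty}^2\lesssim\|w\|_{L^2_a}\|w\|_{H^1_a}$. For the second inequality in \eqref{eq:w2} I would instead take $g=e^{-a|\cdot|}w$, so that $g^2=e^{-2a|\cdot|}w^2$; since $x\mapsto|x|$ is Lipschitz with $\pd_x|x|=\sgn x$ a.e., one has $g\in H^1(\R)$ with weak derivative $e^{-a|\cdot|}(\pd_xw-a\,\sgn(\cdot)\,w)$, whence $\|\pd_xg\|_{L^2}\le\|\pd_xw\|_W+a\|w\|_W\lesssim\|w\|_{W_1}$ and $\|e^{-2a|\cdot|}w^2\|_{L^\infty}=\|g\|_{L^\infty}^2\lesssim\|g\|_{L^2}\|\pd_xg\|_{L^2}\lesssim\|w\|_W\|w\|_{W_1}$.

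Estimate \eqref{eq:w1} is the only one needing a small twist: multiplying $w(x)^2=2\int_{-\infty}^x w\,\pd_yw\,dy$ by $e^{ax}$ would produce the unbounded factor $e^{a(x-y)}\ge1$. Instead I would integrate from the right, writing $w(x)^2=-2\int_x^\infty w\,\pd_yw\,dy$ (legitimate since $w\in H^1(\R)$ vanishes at infinity), so that, using $e^{ax}=e^{a(x-y)}e^{ay}\le e^{ay}$ for $y\ge x$,
\begin{equation*}
e^{ax}w(x)^2\le 2\int_x^\infty e^{ay}|w|\,|\pd_yw|\,dy\le 2\int_\R e^{ay}|w|\,|\pd_yw|\,dy.
\end{equation*}
Then I would split $e^{ay}=e^{(1-\theta)ay}e^{\theta ay}$, apply Cauchy--Schwarz, and invoke the log-convexity bound with $s=1-\theta$ on $w$ and with $s=\theta$ on $\pd_xw$:
\begin{equation*}
\|w^2\|_{L^\infty_a}\le 2\|e^{(1-\theta)a\cdot}w\|_{L^2}\|e^{\theta a\cdot}\pd_xw\|_{L^2}\le 2\|w\|_{L^2}^\theta\|w\|_{L^2_a}^{1-\theta}\|\pd_xw\|_{L^2_a}^\theta\|\pd_xw\|_{L^2}^{1-\theta},
\end{equation*}
which is \eqref{eq:w1} up to reordering of the factors.

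The computations are all routine; the only points demanding a bit of care --- hence the ``main obstacle'', though a minor one --- are the verification that the weighted functions $e^{a\cdot}w$ and $e^{-a|\cdot|}w$ genuinely lie in $H^1(\R)$ (using that the weights are bounded and Lipschitz, or by first treating $w\in C_c^\infty(\R)$ and passing to the limit) and the vanishing of the boundary contribution at $+\infty$ in the one-sided representation used for \eqref{eq:w1}.
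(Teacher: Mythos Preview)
Your proof is correct and follows essentially the same route as the paper: for \eqref{eq:w1} the paper also integrates from the right to get $e^{ax}w^2(x)\le 2\int_x^\infty e^{ay}|w||w'|\,dy$, then applies Cauchy--Schwarz at the two endpoints $\theta=0,1$ and takes the geometric mean of the resulting bounds, whereas you split the weight before Cauchy--Schwarz and then invoke log-convexity --- a cosmetic reordering yielding the same constant. For \eqref{eq:w2} the paper simply says ``in the similar way'', and your explicit substitutions $g=e^{a\cdot}w$ and $g=e^{-a|\cdot|}w$ are exactly the intended computation.
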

\begin{proof}[Proof of Lemma~\ref{cl:winfty}]
It suffices to prove Lemma~\ref{cl:winfty} for $w\in C_0^\infty(\R)$.
Since $e^{ax}$ is monotone increasing,
$$e^{ax}w^2(x)=-2e^{ax}\int_x^\infty w(y)w'(y)\,dy
\le 2 \int_x^\infty e^{ay}|w(y)||w'(y)|\,dy\,.$$
By the Schwarz inequality, we have $$e^{ax}w^2(x)\le2\|w\|_{L^2}\|w'\|_{L^2_a}
\,,\quad e^{ax}w^2(x)\le 2\|w\|_{L^2_a}\|w'\|_{L^2}
\quad\text{for any $x\in\R$.}$$  
Interpolating the above inequalities, we have \eqref{eq:w1}.
We can prove \eqref{eq:w2} in the similar way.
\end{proof}

\end{document}